\documentclass[12pt,reqno]{amsart}

\usepackage{amsmath,amssymb,amsthm,color,psfrag,fourier}
\usepackage{microtype}
\usepackage[a4paper, total={6.5in, 8.5in}]{geometry}

\usepackage{graphicx}

\usepackage[final,
%colorlinks=true,linkcolor=black,
pdftitle={TITLE},
pdfauthor={NAME},hypertexnames=false]{hyperref}
%\usepackage{ifthen}

%\graphicspath{./figures}

\usepackage{epstopdf}

\theoremstyle{plain}

\newtheorem*{rem*}{Remark}
%PROOF CASES
\theoremstyle{definition}
\newtheoremstyle{case}{}{}{}{}{}{:}{ }{}
\newtheorem{case}{Case}

\numberwithin{subcase}{case}

\theoremstyle{plain}
\newtheorem{thm}{Theorem}[section]  %reset theorem numbering for each chapter
\newtheorem{propn}[thm]{Proposition}
\newtheorem{lemm}[thm]{Lemma}
\newtheorem{corll}[thm]{Corollary}
\newtheorem{remark}[thm]{Remark}

\theoremstyle{definition}
\newtheorem{defn}[thm]{Definition} %definition numbers are dependent on theorem numbers
\newtheorem{ex}[thm]{Example}

\def\scl[#1][#2]{{\scalebox{#1}{#2}}}

\def\oep{\overline{e}_{+}}
\def\oem{\overline{e}_{-}}

\def\ep{e_{+}}
\def\eem{e_{-}}

\def\epp{e_{++}}
\def\app{a_{++}}

\def\vpp{v_{++}}
\def\vmm{v_{--}}

\def\oap{\overline{a}_{+}}
\def\oam{\overline{a}_{-}}

\def\oapp{\overline{a}_{++}}

\def\ovpp{\overline{v}_{++}}
\def\ovmm{\overline{v}_{--}}

\def\ap{a_{+}}
\def\am{a_{-}}
\def\vp{v_{+}}
\def\vm{v_{-}}

\def\oapp{\overline{a}_{++}}

\def\ovpp{\overline{v}_{++}}
\def\ovmm{\overline{v}_{--}}

\newcommand{\OO}{\mathcal{O}}

% shortcuts

 %absolute value
 % inner product
 % boundary
 % colon in function def
 % injective function
 % superscriprt -1
 % operator norm 
 %plus/minus 1
 % surjective function

\def\cob[#1]{\textcolor{blue}{#1}}
\def\cog[#1]{\textcolor{green}{#1}}
\def\cor[#1]{\textcolor{red}{#1}}

%------
% Don't mess with this unless you know what you're doing.
% (It creates a squarish vertically centered dot that can be used to
%  indicate a free parameter.)
    
\newcommand{\param}%
	{{\mathchoice{\mkern1mu\mbox{\raise2.2pt\hbox{$\centerdot$}}\mkern1mu}%
	{\mkern1mu\mbox{\raise2.2pt\hbox{$\centerdot$}}\mkern1mu}%
	{\mkern1.5mu\centerdot\mkern1.5mu}{\mkern1.5mu\centerdot\mkern1.5mu}}}

\begin{document}

%%%%%%%%%%%%%%%%%%%%%%%%%%%%%%%%%%%%%%%%%%%%%%%%%%%%%%%%%%%%%%%%%%%%%%%%%%%%% 

\title[A Structure Theorem for Bad 3-Orbifolds]{A Structure Theorem for Bad 3-Orbifolds}

%\title[The Classification of Bad 3-Orbifolds]{The Classification of Bad 3-Orbifolds}

%\title[A Prime Decomposition for Bad 3-Orbifolds]{A Prime Decomposition for Bad 3-Orbifolds}

%\title[A Structure Theorem for Bad 3-Orbifolds]{A Structure Theorem for Bad 3-Orbifolds}

%\title[Obtaining a Good 3-Orbifold From a Bad 3-Orbifold]{Obtaining a Good 3-Orbifold From a Bad 3-Orbifold}

\author[R. Lehman]{R. Lehman}
\author[Yo'av Rieck]{Yo'av Rieck}
\address{Dept.\ of Mathematics \\
  University of Arkansas\\
  Fayetteville, AR 72701}
\email{\href{mailto:Rachellehman0530@gmail.com}{Rachellehman0530@gmail.com}}
\email{\href{mailto:yoav@uark.edu}{yoav@uark.edu}}

%%%%%%%%%%%%%%%%%%%%%%%%%%%%%%%%%%%%%%%%%%%%%%%%%%%%%%%%%%%%%%%
\newif\ifDraft
\Draftfalse
%\Drafttrue
%%%%%%%%%%%%%%%%%%%%%%%%%%%%%%%%%%%%%%%%%%%%%%%%%%%%%%%%%%%%%%%

\begin{abstract}
We \em explicitly \em construct a collection of bad 3-orbifolds, \(\mathcal{X}\), satisfying the following properties:
	\begin{enumerate}
	\item The underlying topological space of any \(X \in \mathcal{X}\) is homeomorphic to $S^2\times I$ or $(S^2\times S^1)\backslash B^3$.
	\item The boundary of any \(X \in \mathcal{X}\) consists of one or two spherical 2-orbifolds.
	\item Any bad 3-orbifold is obtained from a good 3-orbifold by repeating, finitely many times, the following operation: remove one or two orbifold-balls, and glue in some \(X \in \mathcal{X}\). 
	\end{enumerate}
 Conversely, any bad 3-orbifold \(\OO\) contains some \(X \in \mathcal{X}\) as a sub-orbifold; we call removing \(X\) and capping the resulting boundary \em cut-and-cap.\em\ Then by cutting-and-capping finitely many times we obtain a good orbifold.

\ifDraft
\textcolor{orange}{{\tiny
having the following property: given any bad 3-orbifold, $\OO$, it admits an embedded suborbifold $X\in\mathcal{X}$ such that after removing this member from $\OO$, and capping the resulting boundary, and then iterating this process finitely many times, you obtain a good 3-orbifold. 
Reversing this process gives us a procedure to obtain any possible bad 3-orbifold starting with a good 3-orbifold. 
Each member of $\mathcal{X}$ has 1 or 2 spherical boundary components and has underlying topological space $S^2\times I$ or $(S^2\times S^1)\backslash B^3$. }}
\fi
\end{abstract}

\clearpage\maketitle
\thispagestyle{empty}

%%%%%%%%%%%%%%%%%%%%%%%%%%%%%%%%%%%%%%%%%%%%%%%%%%%%%%%%%%%%%%%%%%%%%%%%%%%%% 

%Tears, idle tears, I know not what they mean, \\
%Tears from the depth of some divine despair \\
%Rise in the heart, and gather to the eyes, \\
%In looking on the happy autumn-fields, \\
%And thinking of the days that are no more. \\[8pt]
%--- Lord Tennyson
  
\begin{flushright}
C'est tellement myst\'erieux, le pays des larmes.\\[4pt]
{\scriptsize --- Antoine De Saint-Exupery}
\end{flushright}

\bigskip

\section{Introduction}\label{sec:introduction}

Sometime around 1980 WP Thurston stated and sketched the proof of his celebrated \em Orbifold Theorem\em, showing that if a 3-dimensional orbifold \(\OO\) with non-empty singular set is \em good \em --- covered by a manifold --- then \(\OO\) admits a decomposition into geometric pieces (for definitions see Section~\ref{sec:Preliminaries}, and  for a discussion and proof see, for example,~\cite{thurston}, \cite{boileau}, \cite{MR2060653}, \cite{cooper}, \cite{MR1844891}). 
The case of an empty singular set --- manifolds --- is Thurston's Geometrization Conjecture,  proved by Perelman~\cite{perelman2}
~\cite{perelman1}~\cite{perelman3} (a proof of the Orbifold Theorem along these lines can be found in ~\cite{MR3244330}).  These theorems have been immensely influential and allowed the  study of various aspects of 3-dimensional topology by considering well-understood geometric pieces and combining them.  Discussion about 3-orbifolds can also be found in~\cite{ConwayBurgielGoodmanStrauss} 
and~\cite{Montesinos}.
In~\cite{BonahonSiebenmann} the reader can find the classification of Seifert fibered 3-orbifolds.

By its very definition a \em bad orbifold \em cannot be constructed from geometric pieces, and so the assumption that the orbifold is good cannot be removed. Our goal in this paper is to present an explicit structure theorem for bad orbifolds that allows their study by means of induction.

We now provide some more details about our strategy and results. Let $\OO$ be a closed, orientable (possibly bad) 3-orbifold. Thurston's Orbifold Theorem and Perelman's Geometrization Theorem state that after performing prime decomposition (decomposing $\OO$ along spherical 2-orbifolds --- 2-orbifolds covered by \(S^{2}\)) 
and then JSJ decomposition (decomposing along certain toroidal 2-orbifolds --- 2-orbifolds covered by the 2-torus) 
each of the pieces obtained, say \(\OO'\), satisfies exactly one of the following:
\begin{enumerate}
\item \(\OO'\) is \em geometric \em (the geometric quotient of one of Thurston's eight geometries), or
\item \(\OO'\) contains an embedded bad 2-orbifold.
\end{enumerate}
Consequently \(\OO\) itself is bad if and only if it contains an embedded bad 2-orbifold, which we can take to be either a teardrop or a bad-football (see Proposition~\ref{torf}). 
Using this we show that if $\OO$ is bad, then it contains a compact 3-suborbifold, $X$, satisfying the following properties: 
\begin{enumerate}
\item $X$ is a member of one of 
twelve explicitly constructed families, 
\ifDraft
\textcolor{orange}{\tiny {\bf draft only:} I split Forms 4 and 5 to 4.a, 4.b, 5.a, and 5.b} 
\fi
\item $X$ contains an embedded teardrop or bad-football, 
\item $\partial X$ consists of one or two spherical 2-orbifolds, and 
\item $X$ has underlying topological space $S^2\times I$ or $(S^2\times S^1)\backslash B^3$.
\end{enumerate}
\noindent Let the process of removing $X$ from $\OO$ and capping the resulting boundary components by attaching a cone be called \textit{cutting and capping $X$}.

The main result of this paper is: 

\begin{thm}
Let $\OO$ be a closed, orientable 3-orbifold. Then there exists orbifolds $\OO_0,\OO_1,\dots,\OO_{n+m}$ (for some $n,m\geq0$) so that $\OO_i$ is obtained from $\OO_{i-1}$ by cutting and capping $X_i$, where:
\begin{enumerate}
\item $\OO=\OO_0$,
\item $(1\leq i\leq n)$ $X_i$ is listed in Theorem~\ref{localteardrop},
\item $(n+1\leq i\leq n+m)$ $X_i$ is listed in Theorem~\ref{localfootball},
\item $\OO_{n+m}$ is good.
\end{enumerate}
In particular, the boundary of each $X_i$ consists of 1 or 2 spherical 2-orbifolds, and its underlying topological space is $S^2\times I$ or $(S^2\times S^1)\backslash B^3$.
\label{mainthm}
\end{thm}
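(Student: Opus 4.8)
The plan is to run \emph{cut-and-cap} as an induction in which Proposition~\ref{torf} serves as the stopping criterion and Theorems~\ref{localteardrop} and~\ref{localfootball} are the engine that locates the suborbifold to remove. Recall from Proposition~\ref{torf} that a closed, orientable 3-orbifold is good if and only if it contains no embedded teardrop and no embedded bad-football. Note that cutting out an $X$ with spherical boundary and capping the resulting spheres with orbifold-balls produces again a closed, orientable 3-orbifold, so Proposition~\ref{torf} applies at every stage. (If a cut-and-cap using an $X$ with two boundary spheres were to disconnect the orbifold, one simply continues the induction on each of the at most two resulting components; this does not affect the argument, so we suppress it below.) The argument has two phases: first eliminate all embedded teardrops, then all embedded bad-footballs.

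\emph{Phase 1.} Set $\OO_0=\OO$. As long as $\OO_i$ contains an embedded teardrop, Theorem~\ref{localteardrop} supplies a compact 3-suborbifold $X_{i+1}\subset\OO_i$ lying in one of the listed families, with underlying space $S^2\times I$ or $(S^2\times S^1)\setminus B^3$, with boundary one or two spherical 2-orbifolds, and containing an embedded teardrop; cutting and capping $X_{i+1}$ yields $\OO_{i+1}$. The crux is that this process terminates, say after $n$ steps, producing $\OO_n$ with no embedded teardrop. I would prove this by exhibiting a complexity function $c$ from closed, orientable 3-orbifolds to a well-ordered set (for instance a tuple in $\mathbb{N}^{k}$ ordered lexicographically) and checking that $c$ strictly decreases under each cut-and-cap of Phase~1. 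I expect this termination estimate to be the main obstacle. Capping a spherical boundary 2-orbifold glues in a cone, which in general introduces new singular edges and a new singular vertex, so $c$ cannot be a naive count of cone points or of vertices of the singular graph; it must be calibrated — using the explicit list in Theorem~\ref{localteardrop} together with the fact that each $X_{i+1}$ has underlying space $S^2\times I$ or $(S^2\times S^1)\setminus B^3$ — so that the decrease coming from deleting the portion of the singular locus inside $X_{i+1}$ (and, for the $(S^2\times S^1)\setminus B^3$ and the non-separating $S^2\times I$ pieces, from removing a prime $S^2\times S^1$ summand of the underlying space) strictly outweighs the increase contributed by the one or two caps.

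\emph{Phase 2.} If $\OO_n$ is good we are finished with $m=0$. Otherwise $\OO_n$ contains no embedded teardrop, so by Proposition~\ref{torf} it contains an embedded bad-football, and we repeat the above using Theorem~\ref{localfootball}: while $\OO_{n+j}$ contains an embedded bad-football we take $X_{n+j+1}\subset\OO_{n+j}$ from the families listed there — again with spherical boundary, underlying space $S^2\times I$ or $(S^2\times S^1)\setminus B^3$, and now containing an embedded bad-football — and cut and cap it. Two points need checking. First, Phase~2 must not resurrect teardrops: each cap is an orbifold-ball, hence good, so any embedded teardrop in $\OO_{n+j+1}$ can be isotoped off the capping balls and therefore already lived in $\OO_{n+j}$; since $\OO_n$ has no embedded teardrop, neither does any $\OO_{n+j}$. (This step uses an incompressibility discussion of the capping spheres of exactly the type already needed in the proofs of Theorems~\ref{localteardrop} and~\ref{localfootball}.) Second, Phase~2 terminates, say after $m$ steps, by a complexity argument of the same flavor as in Phase~1.

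\emph{Conclusion.} After $n+m$ cut-and-cap operations we reach $\OO_{n+m}$ containing no embedded teardrop and no embedded bad-football; by Proposition~\ref{torf} it is good, giving item~(4). Items~(1)–(3) hold by construction of the sequence, and the closing assertion — that the boundary of each $X_i$ consists of one or two spherical 2-orbifolds and its underlying space is $S^2\times I$ or $(S^2\times S^1)\setminus B^3$ — is immediate from the statements of Theorems~\ref{localteardrop} and~\ref{localfootball}.
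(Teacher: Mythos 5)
Your overall architecture matches the paper's: two phases (teardrops first, then bad-footballs), Proposition~\ref{torf} as the stopping criterion, Theorems~\ref{localteardrop} and~\ref{localfootball} supplying each $X_i$, and an isotope-off-the-caps argument (the paper's Lemma~\ref{lemma:IsotopingIntoXi}) to show Phase~2 does not resurrect teardrops. However, there is a genuine gap exactly where you flag ``the main obstacle'': you never actually produce the complexity function, and the one the paper uses is not of the shape you are searching for. The paper does not try to calibrate a count of cone points, singular vertices, or prime summands against the singular locus introduced by the caps. Instead the complexity of $\OO_{i-1}$ is simply \emph{the maximal number of disjointly embedded, pairwise non-parallel teardrops}, which is finite by Theorem~\ref{badbounded} (via Kneser--Haken finiteness applied to the essential disks $T\cap E(\Sigma)$), and Lemma~\ref{lemma:FewerTeardrops} shows it strictly decreases under cut-and-cap. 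Crucially, for that lemma to work the teardrop $T$ fed into Theorem~\ref{localteardrop} must be chosen as a \emph{member of a maximal family} of disjoint non-parallel teardrops --- not an arbitrary teardrop, as in your Phase~1. With that choice, a putative equally large family $\mathcal{T}_i$ in $\OO_i$ is isotoped into $\OO_{i-1}\setminus X_i$, and $\mathcal{T}_i\cup\{T\}$ exceeds the maximum, forcing either two members of $\mathcal{T}_i$ to be parallel in $\OO_{i-1}$ (ruled out because the product region can neither contain $X_i$, which carries a vertex or smooth closed curve of $\Sigma$, nor avoid it, which would make them parallel in $\OO_i$) or some member to be parallel to $T$ (ruled out because its cone point would then lie on the edge $e\subset X_i$, so it could not have been isotoped out of $X_i$).

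Phase~2 runs the same counting argument with bad-footballs, using Theorem~\ref{thm:Finiteness_of_bad_footballs} and Lemma~\ref{lemma:FewerBadFootballs}; note that the finiteness theorem for bad-footballs carries a hypothesis (no disjointly embedded teardrops of distinct weights) that is available precisely because Phase~1 has already eliminated all teardrops --- another reason the two phases must be ordered as they are, which your write-up asserts but does not tie to the finiteness statement. In short: your skeleton is the paper's, but the decreasing quantity and the maximality of the chosen teardrop/bad-football are the actual content of the termination proof, and both are missing from the proposal.
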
 

Reversing the process of Theorem \ref{mainthm} gives us the following corollary.

\begin{corll}
\label{cor:ConstructingBadOrbifolds}
Any closed, orientable 3-orbifold is obtained from some good 3-orbifold by a repeating the following process finitely many times:
	\begin{enumerate}
	\item  Removing 1 or 2 orbifold 3-balls, and ---
	\item Attaching a bad 3-orbifold listed in Theorem~\ref{localfootball} or~\ref{localteardrop}. 
	\end{enumerate}
\end{corll}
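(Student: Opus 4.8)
The plan is to run the conclusion of Theorem~\ref{mainthm} backwards. Theorem~\ref{mainthm} produces a finite sequence $\OO = \OO_0, \OO_1, \dots, \OO_{n+m}$ in which $\OO_{n+m}$ is good and each $\OO_i$ is obtained from $\OO_{i-1}$ by cutting and capping some $X_i$ drawn from the lists of Theorems~\ref{localteardrop} and~\ref{localfootball}. I would take the good $3$-orbifold promised by the corollary to be $\OO_{n+m}$ (if $\OO$ is already good then $n=m=0$ and there is nothing to do), and then show that passing from $\OO_i$ back to $\OO_{i-1}$ is precisely one instance of the two-step operation ``remove $1$ or $2$ orbifold $3$-balls, then attach a bad $3$-orbifold listed in Theorem~\ref{localfootball} or~\ref{localteardrop}.''

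The first real step is to unwind the definition of cutting and capping. Writing $A_i = \overline{\OO_{i-1}\setminus X_i}$, so that $\OO_{i-1} = A_i \cup_{\partial X_i} X_i$, the orbifold $\OO_i$ is by definition $A_i \cup C_i$, where $C_i$ is the disjoint union of the cones on the components of $\partial X_i$. By Theorem~\ref{mainthm}, $\partial X_i$ consists of $1$ or $2$ spherical $2$-orbifolds, so each component of $C_i$ is by definition an orbifold $3$-ball, and $C_i$ sits in $\OO_i$ as an embedded collection of $1$ or $2$ orbifold $3$-balls whose union with $A_i$ is all of $\OO_i$. Removing $C_i$ from $\OO_i$ therefore returns $A_i$ together with the identification of $\partial A_i$ with $\partial X_i$; gluing $X_i$ back along that identification recovers $\OO_{i-1}$. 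Hence $\OO_{i-1}$ is obtained from $\OO_i$ by removing $1$ or $2$ orbifold $3$-balls (the components of $C_i$) and attaching $X_i$, which is a bad orbifold on the list of Theorem~\ref{localfootball} or~\ref{localteardrop}. Iterating this from $i=n+m$ down to $i=1$ exhibits $\OO=\OO_0$ as the result of applying the two-step operation $n+m$ times to the good orbifold $\OO_{n+m}$, which is exactly the assertion of the corollary.

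I expect the only point that needs care to be bookkeeping rather than any genuine difficulty: to reverse the $i$-th step one must record which $1$ or $2$ orbifold $3$-balls of $\OO_i$ to excise and with which boundary identification $\partial X_i \to \partial A_i$ to reglue $X_i$. This data (the cores of the cones comprising $C_i$, and the attaching maps used in the proof of Theorem~\ref{mainthm}) is carried along by that construction, so no obstruction arises and the verification is essentially formal. One may also observe in passing that capping the $S^2\times I$ (resp.\ $(S^2\times S^1)\setminus B^3$) underlying each $X_i$ with balls along its boundary yields $S^3$ (resp.\ $S^2\times S^1$), but this is irrelevant to the corollary; all that matters is that the caps are orbifold $3$-balls, which is built into the hypothesis that $\partial X_i$ is spherical.
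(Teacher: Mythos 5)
Your proposal is correct and matches the paper's approach exactly: the paper derives the corollary in a single sentence by ``reversing the process of Theorem~\ref{mainthm},'' and your argument simply spells out that reversal (the caps are cones on the spherical components of $\partial X_i$, hence orbifold $3$-balls, and excising them and regluing $X_i$ recovers $\OO_{i-1}$). The extra bookkeeping you flag is harmless and your write-up is a faithful, slightly more detailed version of what the paper intends.
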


\begin{remark}
{\rm A few comments are in order:
\begin{enumerate}

\item The constructions in Theorems~\ref{localfootball} and~\ref{localteardrop} are explicit and therefore Corollary~\ref{cor:ConstructingBadOrbifolds} lends itself to the constructions of a variety of examples.  We invite the reader to do so.

\item The reader is also invited to simplify our constructions.  While doing so, keep in mind that the constructions are designed so that the boundary of \(X\) is spherical.  This is essential: it allows \em capping the boundary.\em\ This is, of course, similar to prime decomposition of \(3\)-manifolds.

 \item Since $X$ may have 2 boundary components, it is possible that \(\OO_{n+m}\) is disconnected even if \(\OO\) is connected.
\end{enumerate}
}
\end{remark}

\noindent
\textbf{Outline.} The content of each section is summarized here:
\begin{description}
\item[Section \ref{sec:Preliminaries}] Review of standard definitions, mostly to fix ideas. 
\item[Section~\ref{sec:Teardrops are Bounded}] A bound on the number of disjoint, pairwise non-parallel teardrops in $\OO$ (the analogous statement for bad-footballs is false, see Example~\ref{ExampleOfInfinitelyManyTears}).
\item[Section~\ref{sec:Bad-Footballs are Bounded}] A bound on the number of disjoint, pairwise non-parallel, bad-footballs in $\OO$ in the absence of teardrops. 
\item[Sections \ref{sec:Local Pictures about a Bad-Football} and \ref{sec:Local Pictures about a Teardrop}] Classification of compact 3-suborbifolds needed for Theorem \ref{mainthm}. 
\item[Section \ref{sec:Proof of Main Theorem}] Proof of Theorem \ref{mainthm}.
\end{description}

\noindent
\textbf{Acknowledgment.} This project grew out of discussions with Michael McQuillan and the authors are grateful for his insight and suggestions. The authors also thank Daryl Cooper, Chaim Goodman--Strauss, and Neil Hoffman for helpful correspondence and conversations.  We are grateful to the referees for helpful suggestions. Yo'av Rieck was partially supported by the Simons Foundation through grant \#637880.

%%%%%%%%%%%%%%%%%%%%%%%%%
%%%%%%%%%%%%%%%%%%%%%%%%%
%%%%%%%%%%%%%%%%%%%%%%%%%

\section{Preliminaries}\label{sec:Preliminaries}

\noindent
We assume familiarity with the basic notions of 3-manifold topology (see, for example,~\cite{hempel},~\cite{jaco} and ~\cite{schultens}), including the basic definitions related to orbifolds (see, for example, \cite{cooper} or \cite{boileau}). To fix ideas we summarize the main definitions here.  
Most of the notation we use is standard, with \(N\), \(E\), and \(\partial\) denoting closed normal neighborhood, exterior, and boundary. 
We use \(| \OO|\) to denote the underlying topological space of an orbifold \(\OO\), and we use \(\Sigma\) to denote its singular set.  An \(n\)-orbifold is called \em orientable \em if its underlying topological space is orientable and locally it is given as the quotient of a finite group whose action on \(\mathbb{R}^{n}\) is orientation preserving.  An orbifold is \em smooth \em if its singular set is empty; \(B^{3}\) is a smooth open \(3\)-ball. 
\ifDraft
When we use the notation \(B^{3}\) for a subspace of a 3-orbifold, we mean the interior of a closed embedded \(3\)-ball which is disjoint from \(\Sigma\).\fi

\medskip

\noindent{\bf \(2\)-orbifolds.}
Any 2-orbifold that we consider is assumed to be closed and orientable (except in the proof of Proposition~\ref{torf} where we are forced to consider non-orientable 2-orbifolds).
Any point \(p\) in such a 2-orbifold has a \em standard neighborhood \em  that is either a smooth disk or a disk with a single singular point, corresponding to \(p\), of weight \(a>1\) (by which we mean that it is the quotient of a disk in \(\mathbb{R}^{2}\) under the group generated by a rotation by angle \(2\pi/a\), where \(a \geq 2\) is an integer).   A teardrop, $S^2(a)$, is a 2-orbifold with $|S^2(a)| \cong S^2$ (the smooth 2-sphere) and exactly one singular point of weight $a>1$. A bad-football, $S^2(a,b)$, is a \(2\)-orbifold with $|S^2(a,b)| \cong S^2$ and exactly two singular points of unequal weights $a$ and $b$, where  we assume $b>a>1$.  It is well-known that teardrops and bad-footballs are closed, orientable, bad 2-orbifolds.  

A \textit{spherical 2-orbifold} is a 2-orbifold that is covered by $S^2$ (not to be confused with a 2-orbifold whose underlying topological space is \(S^{2}\)).  
A closed spherical orbifold of the from \(S^{2}(a_{1},a_{2},a_{3})\) must satisfy:
\[
\frac{1}{a_{1}} + \frac{1}{a_{2}} + \frac{1}{a_{3}} > 1
\] 
This inequality allows us to list all the spherical \(2\)-orbifolds:\label{2orbifolds}
	\begin{itemize}
	\item \(S^{2}\)
	\item \(S^{2}(a,a)\) for any \(a \geq 2\)
	\item \(S^{2}(2,2,a)\) for any \(a \geq 2\)
%	\item \(S^{2}(2,3,a)\) for \(a = 3,4\) or \(5\).
	\item \(S^{2}(2,3,3)\),  \(S^{2}(2,3,4)\), and \(S^{2}(2,3,5)\)
	\end{itemize}

A \textit{toroidal 2-orbifold} is a 2-orbifold that is covered by the 2-torus  (for example, \(S^{2}(3,3,3)\) is toroidal and is not spherical).  

\medskip

\noindent{\bf \(3\)-orbifolds.} 
By an \em orbifold \em we mean a compact, orientable \(3\)-orbifold.
Any point \(p\) in the interior of an orbifold has a \em standard neighborhood \em
\ifDraft
\textcolor{orange}{\tiny{\bf Draft only:}
 which is an \em orbifold ball\em, that is, a neighborhood whose universal cover is a smooth closed 3-ball.  \bf Orbifold ball \(\Leftrightarrow\) cone on spherical boundary?}
 \fi
which is a cone on a closed spherical 2-orbifold; hence its underlying topological space is a  closed 3-ball and (by the classification of finite group actions on \(S^{2}\)) its singular set is one of the following:
	\begin{itemize}
	\item Empty.
	\item A single unknotted arc, containing \(p\), of weight \(a>1\).
	\item Exactly three arcs, each connecting the boundary to \(p\), so that the union of any two of these arcs is an unknotted arc.  
	The weights of the arcs (say \(a_{1}\), \(a_{2}\), and \(a_{3}>1\)) satisfy 
	\[
	\frac{1}{a_{1}} + \frac{1}{a_{2}} + \frac{1}{a_{3}} > 1
	\] 
	\end{itemize}	
We observe that each component of the singular set of a closed orbifold has one of the following forms:
	\begin{itemize}
	\item A smooth simple closed curve (here ``smooth'' means ``contains no vertex'').
	\item A trivalent graph, and in particular, the graph has no leaves (valence one vertices).
	\end{itemize}

To clarify our usage of the term edge we define it here:

\begin{defn}[edge]
\label{Definition:TypesOfEdges}
{\rm
By an \em edge \em of \(\Sigma\) we mean either a smooth simple closed curve, or an edge ending in two vertices, which may or may not be distinct.}
\end{defn}

\medskip\noindent
The following proposition is an application of Thurston's Orbifold Theorem that allows us to avoid disks with corner lines and corner reflectors:\footnote{We refer the reader to~\cite[page~408]{scott} for the definition of corner lines and corner reflectors; in light of Proposition~\ref{torf}, they will not be considered in this paper beyond the proof of the  proposition.}   

\begin{propn}\label{torf}
A closed, orientable, bad 3-orbifold admits an embedded teardrop or bad-football.
\end{propn}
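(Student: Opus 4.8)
The plan is to reduce, via Thurston's Orbifold Theorem and Perelman's Geometrization, to the two non-geometric possibilities for the pieces of the decomposition of $\OO$, and then to argue directly with the singular locus in a standard neighborhood of a non-manifold point to produce an embedded teardrop or bad-football. First I would note that we may assume $\Sigma \neq \emptyset$: a smooth orbifold is a $3$-manifold, which is good. So $\OO$ has a non-empty trivalent (or simple-closed-curve) singular graph. Now perform prime decomposition along spherical $2$-orbifolds and then JSJ decomposition along toroidal $2$-orbifolds, as recalled in the introduction. Since ``bad'' is preserved and reflected by these operations — cutting along a spherical $2$-orbifold and capping with cone-on-sphere neighborhoods does not change goodness of either piece, because a good orbifold capping onto another good one is good, and conversely — the orbifold $\OO$ is bad if and only if at least one resulting piece $\OO'$ is bad. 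By the Orbifold Theorem, each such $\OO'$ is either geometric (hence good, since the eight geometries are modeled on simply connected manifolds and the quotient is by a discrete group acting properly discontinuously, giving a manifold cover) or contains an embedded bad $2$-orbifold. So some piece $\OO'$, and therefore $\OO$ itself, contains an embedded closed bad $2$-orbifold $F$.

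Next I would analyze $F$. By the classification of $2$-orbifolds (the list on page recalling the spherical ones, together with the Euler-characteristic/covering trichotomy), a closed orientable bad $2$-orbifold has underlying space $S^2$ and singular set either a single cone point (a teardrop $S^2(a)$) or exactly two cone points of distinct weights (a bad-football $S^2(a,b)$, $b>a>1$); the cases $S^2(a,a)$, $S^2(2,2,a)$, $S^2(2,3,3)$, $S^2(2,3,4)$, $S^2(2,3,5)$ are spherical hence good, and anything with a hyperbolic or Euclidean structure is good. If $F$ already has one or two singular points of the stated kind we are done; otherwise — and this is the only subtle point — the ambient $3$-orbifold forces $F$ to meet $\Sigma(\OO)$ in a controlled way, but $F$ as an abstract $2$-orbifold is already on our short list, so in fact \emph{any} embedded bad $2$-orbifold is already a teardrop or bad-football and no further surgery on $F$ is needed. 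The genuine content is therefore entirely in the first paragraph: passing from ``$\OO$ is bad'' to ``some geometrization piece is bad'' and then invoking the Orbifold Theorem's dichotomy.

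The part requiring care — and where the footnote about corner lines and corner reflectors becomes relevant — is that, a priori, the decomposition and the statement of the Orbifold Theorem are usually phrased allowing $2$-orbifolds with corner reflectors and mirror boundary, i.e.\ in the non-orientable-local-model setting. Since we have restricted to orientable $3$-orbifolds throughout, I would observe that an orientable $3$-orbifold has no corner lines in its singular locus and its cross-sectional $2$-orbifolds are themselves orientable (closed, with only cone points), so the ``bad $2$-orbifold'' produced by the Orbifold Theorem can indeed be taken to be a closed orientable one, which is then a teardrop or bad-football by the classification. The main obstacle is thus bookkeeping: checking that each geometry yields a good orbifold (routine: each is a quotient of a simply connected model space by a group acting properly discontinuously, with a torsion-free finite-index subgroup by Selberg's lemma when the group is finitely generated linear, or by direct inspection of the eight cases), and checking the ``if and only if'' for badness under prime and JSJ decomposition. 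I expect the orientability reduction eliminating corner reflectors to be the one genuinely delicate step, and I would dispatch it by citing the local structure of orientable $3$-orbifolds described earlier in this section.
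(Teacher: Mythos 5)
Your overall strategy matches the paper's: invoke the Orbifold Theorem to produce an embedded bad $2$-suborbifold and then identify it. The gap is in the step you yourself flag as delicate and then dismiss: you claim that because $\OO$ is orientable, any embedded bad $2$-suborbifold is itself a closed orientable $2$-orbifold with only cone points, so that the corner-reflector cases never arise. That claim is false. Orientability of $\OO$ constrains the local groups of the \emph{$3$-orbifold} to act orientation-preservingly on $\mathbb{R}^{3}$, but such a group (e.g.\ a dihedral group generated by a rotation about the $z$-axis and a $\pi$-rotation about the $x$-axis) can preserve an equatorial disk on which it restricts to a group \emph{containing reflections}. Concretely, near a vertex of $\Sigma$ where two edges of weight $2$ and one edge of weight $a$ meet, there is an invariant disk whose induced orbifold structure has mirror boundary and a corner reflector of order $a$. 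Thus the bad $2$-suborbifold $S$ handed to you by the Orbifold Theorem may well be $\widehat D^2(a)$ or $\widehat D^2(a,b)$ (a disk with one or two corner reflectors and mirror boundary lying along weight-$2$ edges of $\Sigma$); the paper states explicitly in its preliminaries that this proof is the one place where non-orientable $2$-orbifolds are forced upon us, and Scott's classification of bad $2$-orbifolds, which the proof cites, lists exactly these four types.

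The fix is short but it is a genuine extra step, not bookkeeping: when $S = \widehat D^2(a)$ or $\widehat D^2(a,b)$, take a regular neighborhood $U$ of $S$; then $|U| \cong B^{3}$ and, because the corner reflector points have valence three in $\Sigma$, the sphere $\partial U$ meets $\Sigma$ in one point of weight $a$ or in two points of weights $a$ and $b$, so $\partial U$ is the desired teardrop or bad-football. Your first paragraph (reduction to geometrization pieces, geometric implies good, badness detected by an embedded bad $2$-orbifold) and your classification of closed \emph{orientable} bad $2$-orbifolds as teardrops and bad-footballs are both fine and consistent with the paper; the missing content is entirely the corner-reflector case.
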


\begin{proof}
Let $\OO$ be a  closed orientable bad 3-orbifold.
Thurston's Orbifold Theorem implies that $\OO$ admits an embedded, closed, connected, bad 2-suborbifold, say $S$. 

It follows from~\cite[Theorem~2.3, page~425]{scott} that \(S\) is one of the following:\footnote{The notation used in~\cite{scott} is $D^2(a)$ and $D^2(a,b)$, but we will use the notation $\widehat D^2(a)$ and $\widehat D^2(a,b)$ instead to avoid conflicts with the notation used below.}
\begin{enumerate}
\item $S=S^2(a)$ with $a>1$,
\item $S=S^2(a,b)$ with $b>a>1$,%\footnote{In~\cite{scott} the condition is \(a \neq b\), but if one of the two equals \(1\) we in fact have a tear drop; similarly for \(\widehat D^2(a,b)\) below.}
\item $S=\widehat D^2(a)$, where the weight of the corner reflector is $a>1$, or
\item $S=\widehat D^2(a,b)$, where the weights of the corner reflectors are $b>a>1$.
\end{enumerate}
If $S=S^2(a)$ or $S^2(a,b)$ we are done.
Suppose that $S=\widehat D^2(a)$ or $\widehat D^2(a,b)$, see Figure~\ref{Figure for Proposition torf} where \(S\) is shaded.
\(S\) admits a neighborhood \(U\) with \(|U| \cong B^{3}\), and since the corner reflector points of $S$ have valence three, \(\Sigma \cap \partial U\) is either in one point of weight \(a>1\) (corresponding to case~3) or two points of weights \(b>a>1\) (corresponding to case~4).  Thus \(\partial U\) is either a teardrop or a bad-football.
\begin{figure}[h!]
\psfrag{2}{\scl[.7][$2$]}
\psfrag{p}{\scl[.7][$a$]}
\psfrag{q}{\scl[.7][$b$]}
\centerline{\includegraphics[scale=0.25]{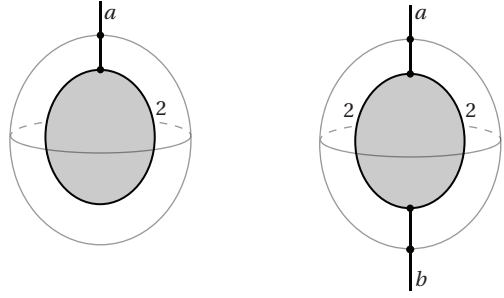}}
\caption{Figure for Proposition \ref{torf}}
\label{Figure for Proposition torf}
\end{figure}
\end{proof}

In the next three lemmas we study the local picture near \(\Sigma\),  the singular set of a closed orbifold.  Note that every component \(V\) of \(|N(\Sigma)|\) is a handlebody with the \(|\Sigma| \cap V\) its core.  In particular, \(V\) is irreducible.

\begin{lemm}
If $B \subset N(\Sigma)$, where $|B|$ is a closed 3-ball, then every component of  $\Sigma\cap B$ is a tree and all its leaves are on the boundary.
\label{leavesonboundary}
%If $\overline{B} \subset N(\Sigma)$, where $|\overline{B}|$ is a closed 3-ball, then every component of  $\Sigma\cap \overline{B}$ is a tree and all its leaves are on the boundary.
%\label{leavesonboundary}
\end{lemm}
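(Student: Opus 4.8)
The plan is to analyze $B$ locally and globally, exploiting the very restrictive structure of $|N(\Sigma)|$ and of $\Sigma$ inside it. First I would recall that $\Sigma$ is a (possibly empty) trivalent graph with no leaves, and that every component $V$ of $|N(\Sigma)|$ is a handlebody with $|\Sigma|\cap V$ as a spine. Since $|B|$ is a closed $3$-ball and $B\subset N(\Sigma)$, the intersection $\Sigma\cap B$ is a $1$-complex properly embedded (up to the usual normal-position adjustments) in $|B|$. I would first argue that $\Sigma\cap B$ contains no cycle: a cycle in $\Sigma\cap B$ would bound a disk in the $3$-ball $|B|$, but by construction every component of $\Sigma$ is either a smooth simple closed curve or a trivalent graph sitting as the core/spine of a handlebody $V$, and such cores are never null-homotopic in $V$; since $B\subset N(\Sigma)$, any loop in $\Sigma\cap B$ would be a loop in the core of some $V$, contradicting that it bounds in $|B|\subset V$. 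Hence every component of $\Sigma\cap B$ is a tree (being a subgraph of the trivalent graph $\Sigma$ with no cycles; vertices remain trivalent or become lower-valence where $\partial B$ cuts edges, but still a tree).

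Next I would show that no leaf (valence-one vertex) of $\Sigma\cap B$ lies in the interior of $|B|$. The only candidates for interior leaves are: (i) a genuine vertex of $\Sigma$ — but $\Sigma$ has no leaves, each vertex being trivalent, so that is impossible; or (ii) an endpoint created by $\partial B$ cutting through an edge of $\Sigma$ — but such an endpoint lies on $\partial B$, not in the interior. One must also rule out a whole component of $\Sigma$ being a "short" arc terminating inside $|B|$; but components of $\Sigma$ have no endpoints at all (they are closed curves or leafless trivalent graphs), so any endpoint of $\Sigma\cap B$ is forced to be a point of $\Sigma\cap\partial B$. Putting this together: each component of $\Sigma\cap B$ is a tree all of whose leaves lie on $\partial B$, which is exactly the claim.

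The main obstacle — and the only real subtlety — is handling general position: a priori $\partial B$ need not meet $\Sigma$ transversely, and $B$ need not be in "normal form" with respect to the handlebody structure of $N(\Sigma)$. I would address this by first isotoping $\partial B$ (within $N(\Sigma)$, rel nothing) so that it meets $|\Sigma|$ transversely in finitely many points, each in the interior of an edge; this is possible since $|\Sigma|$ is a $1$-complex and $\partial B$ a surface. After this adjustment the combinatorial argument above goes through verbatim. A secondary point to be careful about is that $B\subset N(\Sigma)$ could wrap around a handle of the handlebody $V$; but this is precisely what the no-cycle argument handles, since $|B|$ being a ball forces any loop of $\Sigma\cap B$ to be trivial in $\pi_1(V)$, which the spine's loops never are.

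One alternative, cleaner packaging: since $B\subset N(\Sigma)$ with $|B|$ a ball, $B$ lies in a single component $V$ of $N(\Sigma)$, and the inclusion $|B|\hookrightarrow V$ is $\pi_1$-trivial on its image; as $|\Sigma|\cap V$ is a spine of $V$, the retraction $V\to |\Sigma|\cap V$ restricted to $\Sigma\cap B$ shows $\Sigma\cap B$ injects into a tree-or-graph with the property that its fundamental group dies, forcing each component to be simply connected, hence a tree. The leaf statement then follows as above from leaflessness of $\Sigma$. I would present whichever of these is shortest in the final write-up, but I expect the transversality reduction plus the null-homotopy/no-cycle step to be where all the content lies.
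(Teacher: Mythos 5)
Your proof is correct and follows essentially the same route as the paper's: a cycle in $\Sigma\cap B$ would be null-homotopic in the simply connected $|B|$ yet represents a nontrivial element of $\pi_1(|\Sigma|)$, which injects into $\pi_1(|N(\Sigma)|)$ since $|\Sigma|$ is a spine of the handlebody components, giving the contradiction; and interior leaves are ruled out because $\Sigma$ has no valence-one vertices. The extra remarks on transversality and the alternative packaging are fine but not needed beyond what the paper records.
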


\begin{proof}
First assume, for a contradiction, that $\Sigma\cap B$ contains a cycle, say \(c\).  Then $\pi_1(|c|)\not\hookrightarrow \pi_1(|B|)$ as $|B|$ is simply connected, which is a contradiction as \(|c|\) represents a nontrivial element of \(\pi_{1}(|\Sigma|)\) and $\pi_1(|\Sigma|)$ injects into $\pi_1(|N(\Sigma)|)$.  This shows that every component of $\Sigma\cap B$ is a tree. 

No leaf of $\Sigma\cap B$ can be in the interior of \(B\) since \(\Sigma\) has no vertex of valence one.
\end{proof}

Consequently:

\begin{lemm}
\(N(\Sigma)\) admits no bad orbifolds.
\label{Lem:NoBadOrbsInNS}
\end{lemm}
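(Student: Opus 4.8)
The plan is to reduce the statement to ruling out embedded bad \emph{2}-orbifolds, and then to combine the irreducibility of the components of $|N(\Sigma)|$ with Lemma~\ref{leavesonboundary}. First I would observe that it suffices to show that $N(\Sigma)$ contains no embedded teardrop or bad-football: a closed bad $2$-suborbifold \emph{is} a teardrop or a bad-football, and if $N(\Sigma)$ contained a closed bad $3$-suborbifold then, applying Proposition~\ref{torf} to that suborbifold, $N(\Sigma)$ would contain an embedded teardrop or bad-football as well.

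So suppose, for a contradiction, that $S\subseteq N(\Sigma)$ is an embedded teardrop $S^2(a)$ or bad-football $S^2(a,b)$. Since $|S|\cong S^2$ is connected it lies in a single component $V$ of $|N(\Sigma)|$; as observed just above Lemma~\ref{leavesonboundary}, $V$ is a handlebody and hence irreducible, so $|S|$ bounds a closed $3$-ball $|B|\subseteq V\subseteq|N(\Sigma)|$. Equip $B$ with the orbifold structure inherited from $N(\Sigma)$, so that $\Sigma\cap B$ is its singular set and, by transversality of $\Sigma$ to $|S|$, $\partial B=S$. Then Lemma~\ref{leavesonboundary} tells us that every component of $\Sigma\cap B$ is a tree all of whose leaves lie on $\partial B=|S|$. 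The key point is now that $\Sigma\cap|S|$ is a single point $p$ (of weight $a$) in the teardrop case, and consists of two points $p,q$ (of weights $a$ and $b$) in the bad-football case, so these one or two points are the only possible leaves of the tree components of $\Sigma\cap B$.

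I would then finish with a short case analysis. A finite tree that is not a single point has at least two leaves, and a tree component of $\Sigma\cap B$ consisting of a single point would be an isolated point of $\Sigma$, which is impossible since every component of $\Sigma$ is a simple closed curve or a trivalent graph. In the teardrop case, the arc of $\Sigma$ entering $B$ at $p$ lies in a non-trivial tree component $T$, which therefore has a second leaf, necessarily in the interior of $B$ --- contradicting Lemma~\ref{leavesonboundary}. In the bad-football case, the same argument rules out $p$ and $q$ lying on different tree components, so they lie on a common component $T$; then $T$ has exactly the two leaves $p$ and $q$, hence is an arc from $p$ to $q$ with no vertex in its interior (a third edge at an interior vertex of $\Sigma$ would meet $\partial B$ in a third point of $\Sigma\cap|S|$), so $T$ is contained in a single edge of $\Sigma$. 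Since the weight is constant along an edge, this forces $a=b$, contradicting $a\neq b$.

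I expect the bad-football case to be the only delicate point: there one must use that the weight function on $\Sigma$ is constant along each edge and that no further edge of $\Sigma$ can escape through $\partial B$ unnoticed, and one should be slightly careful to exclude the degenerate possibility that a component of $\Sigma\cap B$ is a single point. Everything else is an immediate consequence of irreducibility of the components of $|N(\Sigma)|$ together with Lemma~\ref{leavesonboundary}.
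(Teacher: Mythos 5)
Your proof is correct and follows essentially the same route as the paper: irreducibility of the handlebody components of $|N(\Sigma)|$ gives a ball $B$ bounded by $|S|$, Lemma~\ref{leavesonboundary} forces the components of $\Sigma\cap B$ to be trees with leaves on $\partial B$, and counting leaves rules out a teardrop and forces a football to have equal weights. Your extra care about isolated components and interior vertices (using trivalence of $\Sigma$) only makes explicit steps the paper leaves implicit.
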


\begin{proof}
Let \(S \subset N(\Sigma)\) be an embedded 2-orbifold with \(|S|\) a 2-sphere and \(S \cap \Sigma\) one or two points.  Any bad 2-orbifold in \(N(\Sigma)\) certainly satisfies these conditions.  Irreducibility of \(|N(\Sigma)|\) implies that \(|S|\) bounds a closed ball \(|B| \subset |N(\Sigma)|\).  By Lemma~\ref{leavesonboundary}, \(\Sigma \cap B\) consists of trees with their leaves on the boundary.  Since every tree has at least two leaves, we have that  \(S \cap \Sigma\) consists of exactly two points.  The only tree with exactly two leaves is a single arc, implying that the two singular points on \(S\) have the same weight, so \(S\) is a good football.
\end{proof}

And for footballs in \(V\) we have:

\begin{lemm}
Any football \(F \subset N(\Sigma)\) bounds an orbifold isomorphic to \(D^{2}(a) \times I\) (where \(D^{2}\) is a 2-disk and \(a>1\)).
\label{Lem:GoodFootballsInNS}
\end{lemm}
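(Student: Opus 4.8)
The statement to prove is that any football $F$ contained in $N(\Sigma)$ bounds an orbifold isomorphic to $D^2(a)\times I$. Here a "football" $F$ is a good football $S^2(a,a)$ with two singular points of equal weight $a>1$. Since $F\subset N(\Sigma)$, it is contained in some component $V$ of $N(\Sigma)$, and by the observation just before Lemma~\ref{leavesonboundary}, $|V|$ is a handlebody with $|\Sigma|\cap V$ as its core; in particular $|V|$ is irreducible. The plan is to first locate $F$ with respect to the product structure of $|V|$, then identify the product region it cuts off.

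First I would use irreducibility of $|V|$: since $|F|\cong S^2$ is an embedded sphere in the irreducible manifold $|V|$ (noting $|F|\subset|N(\Sigma)|$ so it cannot be boundary-parallel in a way that escapes $|V|$ — actually one must be slightly careful, as $|F|$ could be non-separating a priori in a handlebody of genus $\geq 1$, but a sphere in an irreducible 3-manifold is separating and bounds a ball), $|F|$ bounds a closed ball $|B|\subset|V|$. Then I would invoke Lemma~\ref{leavesonboundary}: $\Sigma\cap B$ consists of trees with all leaves on $\partial B=|F|$. Since $|F|\cap\Sigma$ is exactly two points (the two singular points of the football), and every tree has at least two leaves, $\Sigma\cap B$ is a single tree with exactly two leaves, hence a single embedded arc $\alpha$. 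Because the two endpoints of $\alpha$ on $F$ have equal weight $a$ (as $F$ is a good football) and an arc carries a single weight, this is consistent, and $(B,\alpha)$ is an orbifold whose underlying space is a ball with a single unknotted spanning arc of weight $a$.

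Next I would identify this orbifold. An unknotted arc in a $3$-ball, thought of as a pair, is standard: $(B^3,\alpha)\cong (D^2,0)\times I$ where $0$ is the center of the disk, via a homeomorphism of pairs; passing to the orbifold with weight $a$ on the arc, this gives an orbifold isomorphism $(B,\alpha)\cong D^2(a)\times I$. The one point requiring a small argument is \emph{unknottedness} of $\alpha$: $\alpha$ is an arc in $B$ with its endpoints on $\partial B=|F|$, but a priori $\alpha$ could be knotted in $B$. However, $\alpha$ is part of $|\Sigma|$, and by the structure of $N(\Sigma)$ (each standard neighborhood of a point of $\Sigma$ is a cone on a spherical $2$-orbifold, so locally $\Sigma$ is standard), combined with the fact that $|V|$ retracts to its core $|\Sigma|\cap V$; alternatively, one observes that $B$ is itself an orbifold ball (a neighborhood of $\alpha$, which is what $N(\Sigma)$ provides), forcing $\alpha$ to be unknotted.

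The main obstacle I anticipate is precisely pinning down why $\alpha$ is unknotted and why $(B,\alpha)$ — as an \emph{orbifold}, not just a topological pair — is isomorphic to $D^2(a)\times I$ rather than merely homeomorphic on underlying spaces. The cleanest route is to note that $F$ bounds a ball $B$ on one side (inside $V$) and that $B$, being a regular neighborhood of the tree $\Sigma\cap B=\alpha$ inside $N(\Sigma)$, is by construction an orbifold neighborhood of an arc of weight $a$; such a neighborhood is, by definition of regular neighborhoods of $1$-dimensional singular loci, orbifold-isomorphic to $D^2(a)\times I$. I would phrase the argument so that this identification is immediate from how $N(\Sigma)$ is built, rather than re-deriving the classification of unknotted arcs; this keeps the proof short and avoids the only genuinely delicate point.
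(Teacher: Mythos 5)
Your proposal follows the paper's proof essentially step for step: irreducibility of $|N(\Sigma)|$ gives a ball $B$ bounded by $|F|$, Lemma~\ref{leavesonboundary} reduces $\Sigma\cap B$ to a single spanning arc $\alpha$ of one weight $a$, and unknottedness of that arc --- because it is a subarc of the core of the handlebody component of $N(\Sigma)$, and the core of a handlebody is locally unknotted --- yields the product structure $D^{2}(a)\times I$. One caution: your ``alternative'' justification, that $B$ \emph{is} a regular neighborhood of $\alpha$ ``by construction,'' is circular, since $B$ is the ball bounded by the given football $F$ and identifying it with a regular neighborhood of $\alpha$ is exactly what must be proved; likewise ``$|V|$ retracts to its core'' does not by itself rule out a knotted spanning arc, so you should rest the unknottedness step on the first of your two suggestions (the core of the handlebody is standardly embedded), which is the argument the paper actually uses.
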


\begin{proof}
\(F\) is a good football by Lemma~\ref{Lem:NoBadOrbsInNS}; in the proof of that lemma we observed that \(F\) bounds \(B\), where \(|B|\) is a closed ball and \(\Sigma \cap B\) is a single arc.  Since the core of a handlebody is locally unknotted, we have that this arc is unknotted which gives the desired product structure.
\end{proof}

We conclude this section with the following simple lemma:
\begin{lemm}
Let $a_1^*,a_2,$ and $a_3$ be the weights of three edges of \(\Sigma\) that meet at a vertex of \(\Sigma\).
If \(a_{1}^{*} \geq a_{1}\) then $S^2(a_1,a_2,a_3)$ is a spherical 2-orbifold.
\label{firstcut}
\end{lemm}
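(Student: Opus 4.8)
The plan is to read the conclusion straight off the local model of $\Sigma$ at a vertex, combined with the classification of spherical triangle $2$-orbifolds recorded at the start of Section~\ref{sec:Preliminaries}. Recall that the standard neighborhood of a vertex of $\Sigma$ is a cone on a closed spherical $2$-orbifold, so (by the classification of finite group actions on $S^2$) the three edges meeting at that vertex, of weights $a_1^*$, $a_2$, $a_3$, satisfy
\[
\frac{1}{a_1^*}+\frac{1}{a_2}+\frac{1}{a_3}>1.
\]
This is precisely the inequality displayed in the description of standard neighborhoods of interior points of a $3$-orbifold, so the first step is simply to invoke it.

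Next I would use the hypothesis $a_1\le a_1^{*}$. Since $a_1^{*}$ and $a_1$ are positive, this gives $\tfrac{1}{a_1}\ge\tfrac{1}{a_1^{*}}$, whence
\[
\frac{1}{a_1}+\frac{1}{a_2}+\frac{1}{a_3}\ \ge\ \frac{1}{a_1^{*}}+\frac{1}{a_2}+\frac{1}{a_3}\ >\ 1.
\]
Finally, the enumeration of closed spherical $2$-orbifolds on page~\pageref{2orbifolds} states that every $S^2(b_1,b_2,b_3)$ whose weights satisfy $\tfrac{1}{b_1}+\tfrac{1}{b_2}+\tfrac{1}{b_3}>1$ is spherical; applying this with $(b_1,b_2,b_3)=(a_1,a_2,a_3)$ yields the lemma.

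There is no real obstacle here; the only point requiring a word of care is that $a_1$ must be a genuine weight, i.e.\ an integer with $a_1\ge 2$, so that $S^2(a_1,a_2,a_3)$ is one of the orbifolds appearing in that list (otherwise, e.g., $S^2(1,a_2,a_3)=S^2(a_2,a_3)$ is a bad football when $a_2\ne a_3$, and the inequality argument alone would not suffice). This is implicit in the statement, since $a_1$ is being compared to the edge-weight $a_1^{*}$ and plays the role of a weight.
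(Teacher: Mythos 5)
Your proof is correct and is essentially identical to the paper's: both invoke the vertex inequality $\tfrac{1}{a_1^*}+\tfrac{1}{a_2}+\tfrac{1}{a_3}>1$, use $a_1\le a_1^*$ to deduce the same inequality for $a_1$, and conclude from the characterization of spherical triangle orbifolds. Your added remark that $a_1$ must itself be an integer weight $\ge 2$ is a reasonable (if implicit) point of care, but does not change the argument.
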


\begin{proof}
The weights of edges meeting at a vertex satisfy $\frac{1}{a_1^*}+\frac{1}{a_2}+\frac{1}{a_3}>1$. By assumption $a_1^{*} \geq a_1$, and thus $\frac{1}{a_1}+\frac{1}{a_2}+\frac{1}{a_3} \geq \frac{1}{a_1^*}+\frac{1}{a_2}+\frac{1}{a_3} >1$. Hence  $S^2(a_1,a_2,a_3)$ is a spherical 2-orbifold.
\end{proof}

%%%%%%%%%%%%%%%%%%%%%%%%%%%%%%%%%%%%%%%%%%%%%%%%%%%%%%%%%%%%%%%%%%%%%%%%%%%%%

\section{A Bound on the Number of Teardrops}\label{sec:Teardrops are Bounded}
By Proposition~\ref{torf} we have that an orbifold $\OO$ is bad if and only if it contains an embedded teardrop or bad-football. As our goal is to remove all bad components from \(\OO\) we must first know that there exists a bound on their number, starting with teardrops:

\begin{thm}[finiteness of teardrops]
Given a closed, orientable 3-orbifold, there exists a bound on the number of disjointly embedded, pairwise non-parallel teardrops. 
\label{badbounded}
\end{thm}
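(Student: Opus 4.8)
The plan is to extract from a teardrop $S^2(a) \subset \OO$ a piece of the singular set $\Sigma$ and to argue that distinct, non-parallel teardrops must "use up" distinct structure there, while $\Sigma$ is a finite graph. Concretely, a teardrop $T = S^2(a)$ meets $\Sigma$ in exactly one point $p$, which is an interior point of an edge $e$ of $\Sigma$ (it cannot be a vertex, since a vertex has valence three and $|T|$ is a sphere meeting $\Sigma$ once). Thus $T$ is a normal sphere crossing the edge $e$ transversally once, and its weight $a$ is forced to be the weight of $e$. The first step is therefore to fix a triangulation (or handle/PL structure) of $|\OO|$ making $\Sigma$ a subcomplex, and to normalize all the teardrops simultaneously: I would isotope a maximal disjoint non-parallel collection $\{T_1,\dots,T_k\}$ so each $T_i$ is normal with respect to the triangulation (in the manifold sense for $|\OO|$) and meets $\Sigma$ transversally in one point.

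Next I would use a standard Kneser–Haken finiteness argument on $|\OO|$. The underlying space $|\OO|$ is a closed orientable $3$-manifold, and $|T_1|,\dots,|T_k|$ is a disjoint family of embedded $2$-spheres. By the Kneser–Haken bound (finiteness of disjoint non-parallel incompressible-or-sphere surfaces; for spheres this is classical, using prime decomposition of $|\OO|$), the number of such spheres that are pairwise non-parallel \emph{as surfaces in $|\OO|$} is bounded by a constant depending only on $|\OO|$ (essentially the number of prime summands plus a contribution from a triangulation). The remaining issue is the gap between "parallel as $2$-orbifolds $S^2(a)$" and "parallel as spheres in $|\OO|$": two $T_i$'s could be parallel in $|\OO|$ through a region $|T_i| \times I$ yet fail to be parallel as orbifolds because the product region meets $\Sigma$ in a complicated way. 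Here I would invoke the local structure lemmas: the region between two $|T_i|$'s that are parallel in $|\OO|$ and both normal contains a sub-arc of an edge of $\Sigma$ running from one to the other, and if that arc is a single unknotted arc of constant weight the teardrops \emph{are} orbifold-parallel (the product region is $D^2(a)\times I$ by an argument like Lemma~\ref{Lem:GoodFootballsInNS}); if instead the arc passes through a vertex or changes combinatorics, then the product region in $|\OO|$ contains a vertex of $\Sigma$. Since $\Sigma$ has only finitely many vertices, only finitely many $T_i$ in a parallel family (in $|\OO|$) can be pairwise non-parallel as orbifolds — at most one per "vertex-free" sub-segment, and the segments are cut by finitely many vertices.

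Assembling these: the number of teardrops, up to orbifold-parallelism, is at most (number of teardrops up to parallelism in $|\OO|$, bounded by Kneser–Haken) times (number of vertices of $\Sigma$, plus one, to account for vertex-free segments along each edge they can cross). This is a bound depending only on $\OO$, proving the theorem. The main obstacle I anticipate is precisely the bookkeeping in the last paragraph: carefully relating orbifold-parallelism of teardrops to parallelism of the underlying spheres, and showing that a product region $|T|\times I$ in $|\OO|$ with no vertices of $\Sigma$ inside it, meeting $\Sigma$ in arcs, is actually an orbifold product $D^2(a)\times I$ — this requires knowing the only way $\Sigma$ can sit inside such a region (a single unknotted weight-$a$ arc, by the tree structure of $\Sigma$ in a ball together with $\Sigma$ having no valence-one vertices, cf.\ Lemmas~\ref{leavesonboundary} and~\ref{Lem:GoodFootballsInNS}) and ruling out closed components of $\Sigma$ trapped inside (impossible since $\Sigma\cap(|T|\times I)$ would have to be closed loops, contradicting that they are boundary-parallel arcs after normalization). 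Once that structural claim is in hand, the counting is routine.
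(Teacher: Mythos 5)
Your overall strategy --- count parallelism classes of the underlying spheres in $|\OO|$ and then correct for the difference between parallelism in $|\OO|$ and orbifold-parallelism --- is genuinely different from the paper's, and as written it has two gaps, the second of which is serious. First, Kneser--Haken finiteness for spheres in a closed $3$-manifold requires the spheres not to bound balls: whenever $|\OO|\not\cong S^3$, the boundaries of $n$ pairwise disjoint balls form $n$ disjoint, pairwise non-parallel spheres, so there is no bound of the kind you invoke for arbitrary sphere families. A teardrop's underlying sphere can bound a ball in $|\OO|$ (such a ball must contain an odd number of vertices of $\Sigma$, by a parity count on the edge-endpoints of $\Sigma$ inside the ball), so your first factor is not justified as stated; it could be repaired by a separate vertex count for the ball-bounding teardrops, but you would need to do that explicitly.

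The serious gap is the structural claim that a product region $P\cong S^2\times I$ in $|\OO|$ between two teardrops, containing no vertex of $\Sigma$, is an orbifold product $D^2(a)\times I$. This is false for two reasons: entire smooth circle components of $\Sigma$ can lie in the interior of $P$ (your parenthetical dismissal of this does not work --- normalizing the teardrops does nothing to $\Sigma$), and, more importantly, the arc of $\Sigma$ crossing $P$ from one teardrop to the other can be knotted in $P$, in which case the teardrops are not orbifold-parallel even though $P$ is vertex-free. Lemmas~\ref{leavesonboundary} and~\ref{Lem:GoodFootballsInNS} cannot be cited here: they apply only to balls inside $N(\Sigma)$, where $\Sigma$ is the core of a handlebody and hence locally unknotted, and they give no control over how an edge of $\Sigma$ sits inside an arbitrary product region of $|\OO|$. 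Consequently your count of ``at most one non-parallel consecutive pair per vertex'' within an $|\OO|$-parallelism class does not follow, and nothing in your argument rules out arbitrarily many teardrops stacked along a single edge, pairwise non-parallel as orbifolds because of knotting. The paper avoids exactly this issue by working in $E(\Sigma)$ rather than in $|\OO|$: each teardrop meets $E(\Sigma)$ in an \emph{essential disk} (Lemma~\ref{prop1}), so the knotting of the edge relative to the sphere is recorded in the isotopy class of that disk; parallel disks in $E(\Sigma)$ force parallel teardrops (Lemma~\ref{teardropparallel}, which uses Lemma~\ref{Lem:GoodFootballsInNS} only inside $N(\Sigma)$, where it is valid); and Kneser--Haken finiteness for essential disks in the compact manifold $E(\Sigma)$ gives the bound.
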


For a teardrop \(T\), we consider \(T \cap N(\Sigma)\) and \(T \cap E(\Sigma)\)  (recall that \(E(\Sigma)\) is the exterior of the singular set). We first prepare two lemmas:

\begin{lemm}
Let \(T \subset \OO\) be a teardrop.  Then $T\cap E(\Sigma)$ is an essential disk in $E(\Sigma)$.
\label{prop1}
\end{lemm}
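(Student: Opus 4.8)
The plan is to place $T$ in a standard position with respect to $N(\Sigma)$, read off that $T\cap E(\Sigma)$ is a properly embedded disk, and then rule out that this disk is compressible by invoking Lemma~\ref{leavesonboundary}.

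First I would note the local picture: the unique singular point $p$ of $T$ lies in the interior of an edge of $\Sigma$, and $T\cap\Sigma=\{p\}$. The main preliminary step is to arrange that $T$ meets $N(\Sigma)$ in a single meridian disk $D$ — by which I mean a disk with $D\subset N(\Sigma)$, $\partial D\subset\partial N(\Sigma)$, and $|D\cap\Sigma|=1$. One clean way to get there: since $T$ is disjoint from $\Sigma$ outside $p$, for any neighborhood $N(p)$ of $p$ the set $T\setminus N(p)$ is compact and disjoint from $\Sigma$, hence lies at positive distance from $\Sigma$; choosing $N(\Sigma)$ thin enough and standard near $p$ then forces $T\cap N(\Sigma)$ to consist of exactly one meridian disk. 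If instead one wants to work with a fixed $N(\Sigma)$, I would take $T$ transverse to $\partial N(\Sigma)$ with $|T\cap\partial N(\Sigma)|$ minimal and argue that every component of $T\cap N(\Sigma)$ other than the one containing $p$ is disjoint from $\Sigma$, hence is a disk lying in a component $V$ of $|N(\Sigma)|$ and missing the core $|\Sigma|\cap V$ of $V$; such a disk is $\partial$-parallel in $V$ (it cuts off from $V$ a ball disjoint from $\Sigma$), so a standard innermost-disk argument — using the irreducibility of $|N(\Sigma)|$ together with Lemma~\ref{leavesonboundary} — removes it, contradicting minimality. Either way, $T\cap N(\Sigma)=D$.

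Granting this, set $\Delta:=T\cap E(\Sigma)=\overline{|T|\setminus D}$. Since $|T|\cong S^2$ and $D$ is a disk, $\Delta$ is a disk, and as $\partial\Delta=\partial D\subset\partial N(\Sigma)=\partial E(\Sigma)$ (recall $\OO$ is closed), $\Delta$ is properly embedded in $E(\Sigma)$. To see that $\Delta$ is essential, suppose for contradiction that $\partial\Delta$ bounds a disk $\delta$ in $\partial N(\Sigma)$. Then $S:=D\cup\delta$ is a $2$-sphere contained in $N(\Sigma)$, and since $\delta$ is disjoint from $\Sigma$ while $|D\cap\Sigma|=1$ we have $|S\cap\Sigma|=1$. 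By the irreducibility of $|N(\Sigma)|$, $S$ bounds a ball $B$ with $|B|$ a closed $3$-ball inside $|N(\Sigma)|$. By Lemma~\ref{leavesonboundary}, every component of $\Sigma\cap B$ is a tree all of whose leaves lie on $\partial B=S$; as a tree has at least two leaves, $\Sigma\cap B\neq\varnothing$ would force $|S\cap\Sigma|\geq 2$, while $\Sigma\cap B=\varnothing$ would force $|S\cap\Sigma|=0$ — either way contradicting $|S\cap\Sigma|=1$. Hence $\Delta$ is an essential disk in $E(\Sigma)$.

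The essentiality argument is short and is where Lemma~\ref{leavesonboundary} does its work; the step I expect to need the most care is the one before it, namely getting $T$ into the position $T\cap N(\Sigma)=D$. Viewed through a thin, $T$-adapted choice of $N(\Sigma)$ this is transparent, but with a fixed $N(\Sigma)$ it is genuinely an innermost-disk reduction and is the only place a nontrivial topological argument is required.
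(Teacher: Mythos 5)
Your proposal is correct and follows essentially the same route as the paper: put $T$ in a position where $T\cap N(\Sigma)$ is a single meridian disk (the paper calls this ``obvious,'' using the adapted-neighborhood picture you describe first), and then observe that the boundary curve is essential in $\partial N(\Sigma)$ because it is a meridian of the handlebody $|N(\Sigma)|$ with core $|\Sigma|$. Your expansion of that last step via irreducibility and Lemma~\ref{leavesonboundary} is just a spelled-out version of the same fact, so no further comparison is needed.
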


\begin{proof}
Obviously, \(T \cap N(\Sigma)\) is a disk that intersects \(\Sigma\) in a single point. 
Since \(|N(\Sigma)|\) consists of handlebodies for which \(|\Sigma|\) are cores, we have that 
\(T\cap \partial E(\Sigma)\) is essential in \(\partial E(\Sigma)\); the lemma follows.
\end{proof}

\begin{lemm}
Let $T_1$ and $T_2$ be teardrops disjointly embedded in $\OO$.
If $T_1 \cap E(\Sigma)$ and $T_2  \cap E(\Sigma)$ are parallel in $E(\Sigma)$, then $T_1$ and $T_2$ are parallel in $\OO$.
\label{teardropparallel}
\end{lemm}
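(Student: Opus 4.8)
\medskip

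\noindent\emph{Proof idea.} The plan is to produce the product region between $T_1$ and $T_2$ by gluing the given parallelism between $E_i:=T_i\cap E(\Sigma)$ ($i=1,2$) to a matching product region cut out of $N(\Sigma)$. First I would fix notation: write $D_i:=T_i\cap N(\Sigma)$, so that (as observed in the proof of Lemma~\ref{prop1}) $D_i$ is a disk meeting $\Sigma$ in a single point, with $T_i=D_i\cup E_i$ and $\partial D_i=\partial E_i$. The hypothesis furnishes a product region $W\subset E(\Sigma)$ with $W\cong D^2\times I$, $E_i=D^2\times\{i\}$, and $A:=\partial W\cap\partial E(\Sigma)=\partial D^2\times I$ an annulus whose two boundary circles are $\partial D_1$ and $\partial D_2$; being connected, $A$ lies in $\partial V$ for a single component $V$ of $N(\Sigma)$, and then $D_1,D_2\subset V$ as well. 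Note $W\cap\Sigma=\emptyset$ since $\Sigma$ lies in the interior of $N(\Sigma)$.

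Next I would analyze the $N(\Sigma)$ side. The $2$-sphere $S:=D_1\cup A\cup D_2$ lies in $V$, which is an irreducible handlebody (as noted just before Lemma~\ref{leavesonboundary}), so $S$ bounds a ball $B\subset V\subset N(\Sigma)$. Since $A\subset\partial V$ misses $\Sigma$, we get $S\cap\Sigma=(D_1\cap\Sigma)\sqcup(D_2\cap\Sigma)$: exactly two points, distinct because $T_1\cap T_2=\emptyset$. By Lemma~\ref{leavesonboundary}, $\Sigma\cap B$ is a union of trees with all leaves on $S$, so with only two leaves available it is a single arc $\alpha$ joining $D_1\cap\Sigma$ to $D_2\cap\Sigma$. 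A short argument then shows $\alpha$ meets no vertex of $\Sigma$: a trivalent vertex on $\alpha$ would send a third edge out of $B$, producing a third point of $S\cap\Sigma$ (or closing up into a cycle, which Lemma~\ref{leavesonboundary} forbids). Hence $\alpha$ lies in a single edge $e$ of $\Sigma$; in particular $T_1$ and $T_2$ meet $\Sigma$ at points of the same weight $a$ (the weight of $e$), so $T_1\cong S^2(a)\cong T_2$. Since the core of a handlebody is locally unknotted (as used in the proof of Lemma~\ref{Lem:GoodFootballsInNS}), $\alpha$ is unknotted in $B$, so $(B,\alpha)\cong(D^2\times I,\{0\}\times I)$ with disk faces $D_1,D_2$ and annular face $A$.

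Finally I would glue: set $R:=B\cup_A W$. Both $B$ and $W$ are balls containing $A$ as an annulus that splits each boundary sphere into two disks, and the gluing matches the $I$-factors of $A$ (via $\partial D_1=\partial E_1$ and $\partial D_2=\partial E_2$); hence $|R|\cong(D^2\cup_{\partial D^2}D^2)\times I\cong S^2\times I$, with $\partial R=(D_1\cup E_1)\sqcup(D_2\cup E_2)=T_1\sqcup T_2$. Since $W$ misses $\Sigma$, we have $\Sigma\cap R=\alpha$, a trivially embedded weight-$a$ arc running from one boundary sphere to the other, so $R\cong S^2(a)\times I$; this exhibits $T_1$ and $T_2$ as parallel in $\OO$. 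The one delicate point is the middle paragraph --- recognizing that the two meridian disks $D_1,D_2$ of the handlebody $V$ cut off a sub-ball carrying the singular arc in standard position; everything else is routine bookkeeping about gluing two balls along a boundary annulus.
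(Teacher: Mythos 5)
Your proof is correct and follows essentially the same route as the paper: form the sphere $\left(T_1\cap N(\Sigma)\right)\cup A\cup\left(T_2\cap N(\Sigma)\right)$ in $N(\Sigma)$, show it bounds a product region containing an unknotted singular arc, and glue this to the parallelism in $E(\Sigma)$. The only difference is that your middle paragraph re-derives inline what the paper obtains by citing Lemma~\ref{Lem:GoodFootballsInNS} (a football in $N(\Sigma)$ bounds a $D^{2}(a)\times I$), whose proof is exactly your tree-with-two-leaves and locally-unknotted-core argument.
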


\begin{proof}
Suppose that $T_1 \cap E(\Sigma)$ and $T_2  \cap E(\Sigma)$ are parallel in $E(\Sigma)$.  Then, in particular, $\partial (T_1 \cap E(\Sigma))$ and $\partial (T_2 \cap E(\Sigma))$ co-bound an annulus \(A \subset \partial N(\Sigma)\).  Then \(\left(T_1 \cap N(\Sigma)\right) \cup A \cup \left(T_2 \cap N(\Sigma)\right)\) is a football in \(N(\Sigma)\) and bounds a product region by Lemma~\ref{Lem:GoodFootballsInNS} which, together with the product region between $T_1 \cap E(\Sigma)$ and $T_2  \cap E(\Sigma)$ in $E(\Sigma)$, shows that \(T_{1}\) and \(T_{2}\) are parallel.
\end{proof}

\begin{proof}[Proof of Theorem~~\ref{badbounded}]
By Lemmas~\ref{prop1} and~\ref{teardropparallel} the intersection of a family of disjoint, pairwise non-parallel teardrops with \(E(\Sigma)\) forms a family of disjoint, pairwise non-parallel, essential disks in \(E(\Sigma)\), whose number is bounded by Kneser--Haken finiteness. 
\end{proof}

%%%%%%%%%%%%%%%%%%%%%%%%%%%%%%%%%%%%
%%%%%%%%%%%%%%%%%%%%%%%%%%%%%%

\section{A Bound on the Number of Bad-Footballs}\label{sec:Bad-Footballs are Bounded}

Having bounded the number of teardrops we now turn our attention to bad-footballs; to that end we place a restriction on teardrops:

\begin{thm}[Finiteness of bad-footballs]
\label{thm:Finiteness_of_bad_footballs}
Let \(\OO\) be a closed orientable orbifold, and suppose that no component of \(\OO\) admits disjointly embedded teardrops 
of distinct weights. 
%\(S^{2}(a)\) and \(S^{2}(b)\) with \(a \neq b\).  
Then there is a bound on the number of disjointly embedded, pairwise non-isotopic, bad-footballs. 
\end{thm}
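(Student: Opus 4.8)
The plan is to imitate the proof of Theorem~\ref{badbounded}, replacing the essential \emph{disks} used there by essential \emph{annuli} in $E(\Sigma)$; the hypothesis on teardrops will be used at exactly one point, to force incompressibility of these annuli. I would begin with reductions. Since $\OO$ is compact, $\Sigma$ has finitely many edges, so only finitely many weights occur and only finitely many types $S^2(a,b)$ of bad-football can be embedded in $\OO$; moreover the two singular points of such a football lie on two \emph{distinct} edges of $\Sigma$ (distinct because $a\neq b$). As $\Sigma$ has finitely many ordered pairs of edges, a pigeonhole argument reduces the theorem to bounding, by a quantity depending only on $\OO$, the size of a family $F_1,\dots,F_k$ of disjoint, pairwise non-isotopic bad-footballs, all of the same type $S^2(a,b)$, with $F_i\cap\Sigma$ equal to one point on a fixed edge $\alpha$ of weight $a$ together with one point on a fixed edge $\beta$ of weight $b$. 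Isotoping the family to meet $N(\Sigma)$ efficiently and running standard innermost-disk arguments --- using that the components of $N(\Sigma)$ are irreducible handlebodies, compare the opening of the proof of Lemma~\ref{prop1} --- one may arrange that each $F_i\cap N(\Sigma)$ is a pair of meridian disks $D^i_\alpha$ and $D^i_\beta$ meeting $\alpha$, respectively $\beta$, once. Then $A_i := F_i\cap E(\Sigma)$ is an annulus properly embedded in $E(\Sigma)$, one boundary component a meridian $\mu^i_\alpha$ of $\alpha$ and the other a meridian $\mu^i_\beta$ of $\beta$.

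The key claim I would establish is that each $A_i$ is \emph{essential} in $E(\Sigma)$. It is not $\partial$-parallel: the boundary curves of a $\partial$-parallel annulus are parallel in $\partial E(\Sigma)=\partial N(\Sigma)$, whereas $\mu^i_\alpha$ and $\mu^i_\beta$ are not, since otherwise the sphere obtained from $D^i_\alpha$, $D^i_\beta$ and the annulus between $\mu^i_\alpha$ and $\mu^i_\beta$ in $\partial N(\Sigma)$ would bound a ball in $N(\Sigma)$ (handlebodies being irreducible) meeting $\Sigma$, by Lemma~\ref{leavesonboundary}, in a tree with exactly two leaves, i.e.\ in a single arc, whose two endpoints would carry equal weight, contradicting $a\neq b$. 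It is incompressible: a compressing disk $D\subset E(\Sigma)$ for $A_i$, after surgery, splits $A_i$ into two \emph{disjoint} disks $E_\alpha, E_\beta\subset E(\Sigma)$ with $\partial E_\alpha = \mu^i_\alpha$ and $\partial E_\beta = \mu^i_\beta$; then $E_\alpha\cup D^i_\alpha$ and $E_\beta\cup D^i_\beta$ are disjointly embedded teardrops $S^2(a)$ and $S^2(b)$, of distinct weights, lying in the component of $\OO$ that carries $F_i$ --- contradicting the hypothesis. (This is precisely the dichotomy that fails in Example~\ref{ExampleOfInfinitelyManyTears}.)

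Next I would prove the analogue of Lemma~\ref{teardropparallel}: if $A_i$ and $A_j$ are parallel in $E(\Sigma)$, then $F_i$ and $F_j$ are parallel, hence isotopic, in $\OO$. The parallelism region $P\cong A_i\times I$ meets $\partial N(\alpha)$ in an annulus $R_\alpha$ cobounded by $\mu^i_\alpha$ and $\mu^j_\alpha$, and meets $\partial N(\beta)$ in an annulus $R_\beta$ cobounded by $\mu^i_\beta$ and $\mu^j_\beta$. The sphere $D^i_\alpha\cup R_\alpha\cup D^j_\alpha$ is a football in $N(\Sigma)$, so by Lemma~\ref{Lem:GoodFootballsInNS} it bounds a product region $D^2(a)\times I$ with vertical boundary $R_\alpha$; similarly near $\beta$. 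Gluing these two product regions to $P$ along $R_\alpha$ and $R_\beta$ yields a product region between $F_i$ and $F_j$. Consequently the essential annuli $A_1,\dots,A_k$ are pairwise non-parallel in $E(\Sigma)$, so by Kneser--Haken finiteness $k$ is bounded; summing this bound over the finitely many types and ordered pairs of edges proves the theorem.

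I expect the conceptual content to be exhausted by the incompressibility step, with the remaining difficulties being technical. Two points will need care: the isotopy normalising $F_i\cap N(\Sigma)$ to two meridian disks, which is delicate when $|\OO|$ is reducible --- this can be arranged by first performing a prime decomposition, using that bad-footballs, being spheres, survive it; and the bookkeeping of sides in the parallelism lemma, to check that the three product regions assemble into an \emph{embedded} copy of $F_i\times I$ rather than an immersed one.
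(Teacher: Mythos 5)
Your proposal follows essentially the same route as the paper: reduce to the annuli $F_i\cap E(\Sigma)$, show they are not boundary-parallel (the paper gets this from Lemma~\ref{Lem:NoBadOrbsInNS}, you re-derive the same football-in-$N(\Sigma)$ contradiction by hand), use the teardrop hypothesis exactly as in Lemma~\ref{lem:BadFootballsYieldIncompressibleAnnuli} to get incompressibility, prove the parallelism transfer exactly as in Lemma~\ref{lem:FootballAnnuliNotParallel} via Lemma~\ref{Lem:GoodFootballsInNS}, and conclude by a finiteness theorem; your preliminary pigeonhole over types and edge pairs is harmless but unnecessary. The one point to repair is the final citation: as the paper notes just before its three lemmas, these annuli may well be $\partial$-compressible, so they are not ``essential'' in the sense required by classical Kneser--Haken finiteness; the correct tool is the Freedman--Freedman strengthening~\cite{FreedmanFreedman}, which bounds disjoint, pairwise non-parallel, incompressible, non-boundary-parallel surfaces in a bounded $3$-manifold without assuming $\partial$-incompressibility.
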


\begin{ex}
\label{ExampleOfInfinitelyManyTears} 
Let \(\OO\) be a connected 3-orbifold that admits two disjointly embedded teardrops, $S^2(a)$ and $S^2(b)$, with \(a \neq b\).  Tubing the two together in \(E(\Sigma)\) we obtain a bad-football that compresses down to $S^2(a)$ and $S^2(b)$ (see Figure~\ref{compression}, where \(D\) is a compressing disk for the tube).  We can tube $S^2(a)$ and $S^2(b)$ again, using a knotted tube nested inside the first tube.  Repeating this process (see Figure~\ref{infinitelyfb}) we obtain arbitrarily many disjointly embedded, pairwise non-isotopic bad-footballs.  Thus the assumptions of Theorem~\ref{thm:Finiteness_of_bad_footballs} cannot be removed.  

\begin{figure}[h!]
\psfrag{s}{\scl[.7][$\Sigma$]}
\psfrag{a}{\scl[.7][$a$]}
\psfrag{b}{\scl[.7][$b$]}
\psfrag{D}{\scl[.7][$D$]}
\centerline{\includegraphics[scale=0.25]{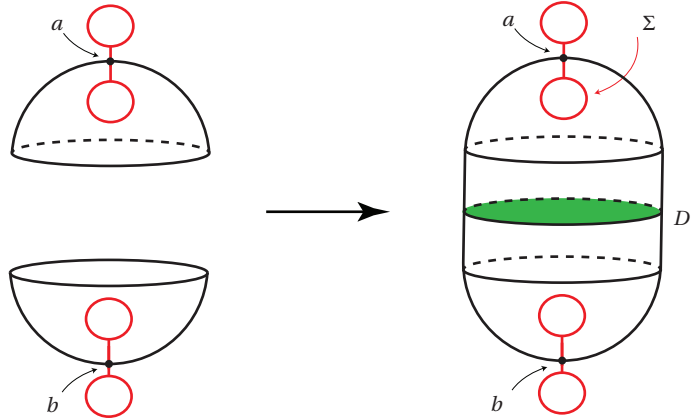}}
\caption{The bad-football containing a compression disc.}
\label{compression}
\end{figure}

\begin{figure}[h!]
\psfrag{s}{\scl[.7][$\Sigma$]}
\psfrag{D}{\scl[.7][$D$]}
\hspace{-6cm}
\centerline{\includegraphics[scale=0.5]{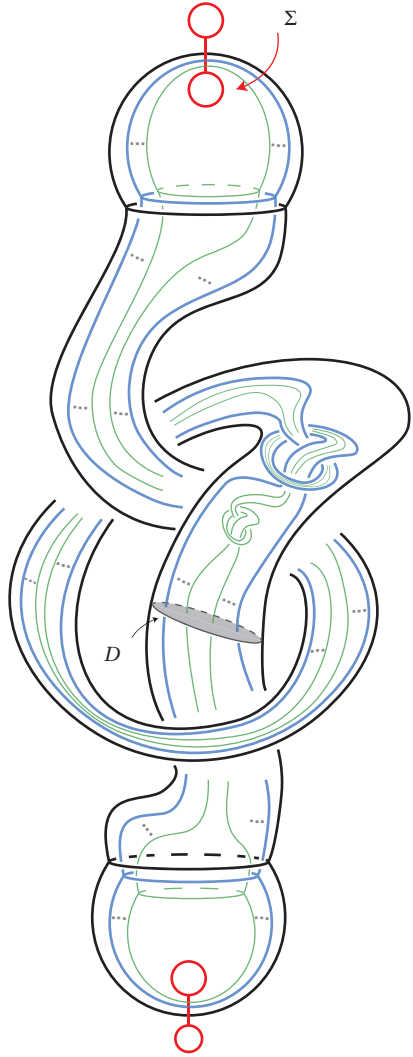}}
\caption{We may obtain an unbounded sequence of non-parallel, compressible bad-footballs.}
\label{infinitelyfb}
\end{figure}
\end{ex}

In contrast to the case of teardrops (Lemma~\ref{prop1}), a bad-football may not give rise to an essential annulus in \(E(\Sigma)\) (as it may boundary compress). We prepare three lemmas for the proof of Theorem~\ref{thm:Finiteness_of_bad_footballs}. In these lemmas, \(F,F_{1}\) and \(F_{2}\) are disjointly embedded bad-footballs, and we set \(A = F \cap E(\Sigma)\), $A_1= F_1\cap E(\Sigma)$ and $A_2 = F_2\cap E(\Sigma)$.

\begin{lemm}
\label{lem:FootballAnnulusNotBoundaryParallel}
The annulus $A$ is not boundary parallel in \(E(\Sigma)\), and \(\partial A\) is essential in \(\partial N(\Sigma)\).
\end{lemm}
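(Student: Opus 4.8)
The plan is to argue by contradiction in both halves, exploiting the classification of $\Sigma \cap B$ for a ball $B\subset N(\Sigma)$ (Lemma~\ref{leavesonboundary}) and the fact that $F$ is a \emph{bad}-football, i.e., its two singular points have \emph{distinct} weights $b>a>1$. First I would recall the setup: $F$ meets $N(\Sigma)$ in two meridian disks $D_a, D_b$, one at each singular point, so $A = F\cap E(\Sigma)$ is an annulus with $\partial A = \partial D_a \cup \partial D_b$, one component a meridian of the $\Sigma$-edge through the weight-$a$ point and the other a meridian of the edge through the weight-$b$ point (these may lie on the same component of $\partial N(\Sigma)$ or on different ones).

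For the second assertion, that $\partial A$ is essential in $\partial N(\Sigma)$: each component of $\partial A$ bounds a disk $D_a$ (resp. $D_b$) meeting $|\Sigma|$ exactly once, and $|\Sigma|$ is the core of the handlebody $|N(\Sigma)|$, so a meridian of the core is never null-homotopic in the boundary of a handlebody. More directly, if a component of $\partial A$ bounded a disk $D'$ in $\partial N(\Sigma)$, then $D'\cup D_a$ (say) would be a sphere in $|N(\Sigma)|$ meeting $|\Sigma|$ once; capping it off against irreducibility of $|N(\Sigma)|$, it would bound a ball $B$ with $\Sigma\cap B$ a single leaf in its interior, contradicting Lemma~\ref{leavesonboundary} (equivalently Lemma~\ref{Lem:NoBadOrbsInNS}). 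So $\partial A$ is essential on $\partial N(\Sigma)$.

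For the first assertion, suppose $A$ were boundary parallel in $E(\Sigma)$, cutting off a product region $A\times I \cong P \subset E(\Sigma)$ with $\partial P = A \cup A'$, $A' \subset \partial E(\Sigma) = \partial N(\Sigma)$ an annulus with $\partial A' = \partial A$. Then the union $F' := D_a \cup A' \cup D_b$ is a 2-sphere embedded in $|N(\Sigma)|$; by irreducibility it bounds a ball $|B|$, and $B$ is actually an orbifold-ball with $\partial B = D_a \cup A' \cup D_b$ carrying exactly the two singular points of $F$. By Lemma~\ref{leavesonboundary}, $\Sigma\cap B$ is a disjoint union of trees with all leaves on $\partial B$; since $\partial B$ meets $\Sigma$ in exactly two points (the centers of $D_a,D_b$), there are exactly two leaves, forcing $\Sigma\cap B$ to be a single unknotted arc $\alpha$ joining those two points. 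But an edge of $\Sigma$ has a well-defined weight, so the two endpoints of $\alpha$ have the \emph{same} weight, whereas they are the weight-$a$ and weight-$b$ singular points of $F$ with $a\neq b$ — contradiction. Hence $A$ is not boundary parallel in $E(\Sigma)$.

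I expect the main obstacle to be the bookkeeping in the boundary-parallel case: one must be careful that $A'$ lies entirely in $\partial N(\Sigma)$ and that the two boundary circles of $A$ are genuinely meridians of $\Sigma$-edges (so that the sphere $F'$ meets $\Sigma$ in exactly two points and in no others), and one must handle the possibility that $\partial A$'s two components lie on the same versus different components of $\partial N(\Sigma)$; in the latter case $A'$ is forced to span between two boundary components of a single handlebody, which still yields a separating sphere in $|N(\Sigma)|$ once we work inside the relevant component of $N(\Sigma)$. Once that is pinned down, invoking Lemma~\ref{leavesonboundary} and the distinctness $a\neq b$ closes the argument immediately.
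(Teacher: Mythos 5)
Your proof is correct and takes essentially the paper's route: the paper disposes of boundary-parallelism by citing Lemma~\ref{Lem:NoBadOrbsInNS} (a boundary-parallel \(A\) would put the bad football inside \(N(\Sigma)\)), and your argument is exactly the content of that lemma's proof --- irreducibility of the handlebody \(|N(\Sigma)|\), Lemma~\ref{leavesonboundary}, and the distinctness \(a\neq b\) --- unwound inline, while the essentiality claim via the meridian disks of \(F\cap N(\Sigma)\) is verbatim the paper's. The only slip is in your closing remark: if the two components of \(\partial A\) lay on different components of \(\partial N(\Sigma)\), then no connected annulus \(A'\subset\partial N(\Sigma)\) could have \(\partial A'=\partial A\), so that case is vacuous (and each component of \(N(\Sigma)\) is a handlebody, which has connected boundary), rather than requiring a separate separating-sphere argument.
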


\begin{proof}
Lemma~\ref{Lem:NoBadOrbsInNS} implies that \(A\) is not boundary parallel, and since each of its boundary components bounds a disk
of \(F \cap N(\Sigma)\) 
that intersects \(\Sigma\) once, both are essential in \(\partial N(\Sigma)\).
\end{proof}

\begin{lemm}
\label{lem:FootballAnnuliNotParallel}
If $A_1$ and $A_2$ are parallel in $E(\Sigma)$, then $F_1$ and $F_2$ are parallel in $\OO$.
\label{parallelfootball}
\end{lemm}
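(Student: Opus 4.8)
The plan is to mirror the structure of Lemma~\ref{teardropparallel} (the teardrop case), using Lemma~\ref{Lem:GoodFootballsInNS} to control what happens inside \(N(\Sigma)\). Suppose \(A_1\) and \(A_2\) are parallel in \(E(\Sigma)\), so they co-bound a region \(P \subset E(\Sigma)\) with \(P \cong A_1 \times I\). First I would examine \(\partial P \cap \partial N(\Sigma)\): each component of \(\partial A_1\) is matched with a component of \(\partial A_2\), and since \(A_1 = F_1 \cap E(\Sigma)\) and \(A_2 = F_2 \cap E(\Sigma)\), the product structure on \(P\) restricts on \(\partial N(\Sigma)\) to two annuli \(A', A'' \subset \partial N(\Sigma)\), each co-bounded by one boundary curve of \(A_1\) and the corresponding boundary curve of \(A_2\). (A small point to check: the two ends of the product \(P\) genuinely abut \(\partial N(\Sigma)\) along annuli and not in some degenerate way; this follows because \(\partial A_i\) is essential in \(\partial N(\Sigma)\) by Lemma~\ref{lem:FootballAnnulusNotBoundaryParallel}, so the parallelism of the \(A_i\) is through the interior of \(E(\Sigma)\) and the boundary pattern is a genuine pair of annuli.)

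Next I would assemble a closed \(2\)-suborbifold of \(N(\Sigma)\). The bad-football \(F_i\) meets \(N(\Sigma)\) in two disks, each hitting \(\Sigma\) once; call them \(D_i^1, D_i^2\). Then \(D_1^1 \cup A' \cup D_2^1\) and \(D_1^2 \cup A'' \cup D_2^2\) are each a \(2\)-sphere in \(|N(\Sigma)|\) meeting \(\Sigma\) in exactly two points, i.e.\ a football (possibly good, possibly a priori bad, but Lemma~\ref{Lem:NoBadOrbsInNS} rules out bad). By Lemma~\ref{Lem:GoodFootballsInNS} each of these footballs bounds a product region \(D^2(a_j) \times I\) inside \(N(\Sigma)\), for \(j = 1, 2\). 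I need these two product regions to be disjoint and to lie on the ``correct'' side; disjointness follows from the fact that the spheres are disjoint and \(|N(\Sigma)|\) is irreducible (nested footballs with disjoint boundary bound disjoint or nested balls, and the nested case would force one of the \(F_i\) inside a ball bounded by the other's associated sphere, contradicting that \(F_1, F_2\) are disjointly embedded bad-footballs — more carefully, one uses that \(\partial A_i\) is essential in \(\partial N(\Sigma)\) so the footballs are ``coaxial'' along the same component of \(N(\Sigma)\)).

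Finally I would glue. The product region \(P \subset E(\Sigma)\) together with the two product regions \(D^2(a_1)\times I\) and \(D^2(a_2)\times I\) in \(N(\Sigma)\) fit together along \(A', A''\) to give a region \(R \subset \OO\) with \(\partial R = F_1 \cup F_2\) and a product structure \(F_1 \times I\): the \(I\)-fibers of \(P\) extend across the \(I\)-fibers of the two \(N(\Sigma)\) product pieces because they agree on the gluing annuli \(A', A''\). Hence \(F_1\) and \(F_2\) are parallel in \(\OO\), as claimed.

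The step I expect to be the main obstacle is the disjointness/side-selection argument in the second paragraph: a priori the two footballs obtained in \(N(\Sigma)\) could bound product regions that overlap or point the wrong way, or the two ends of \(P\) could abut the same component of \(\partial N(\Sigma)\) in a way that makes the footballs nested rather than parallel. Handling this cleanly requires using irreducibility of each handlebody component of \(|N(\Sigma)|\) together with the essentiality of \(\partial A_i\) from Lemma~\ref{lem:FootballAnnulusNotBoundaryParallel} to pin down that \(F_1 \cup F_2\) cuts off a genuine product in \(N(\Sigma)\) on the side away from \(P\); everything else is the routine bookkeeping of gluing product structures along their boundaries.
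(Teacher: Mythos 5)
Your proposal is correct and follows essentially the same route as the paper: take the product region between \(A_1\) and \(A_2\) in \(E(\Sigma)\), observe that its two ends together with the disks \(F_i \cap N(\Sigma)\) form two footballs in \(N(\Sigma)\), apply Lemma~\ref{Lem:GoodFootballsInNS} to get product regions there, and glue the three pieces into a product between \(F_1\) and \(F_2\). The disjointness/side-selection issue you flag is real but is handled exactly as you suggest (the paper does it implicitly by working with the component of \(\OO\) cut along \(F_1 \cup F_2\) that contains the product region).
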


\begin{proof}
Suppose $A_1$ and $A_2$ co-bound the region \(A_{1} \times [0,1]\) in $E(\Sigma)$, and let \(\OO'\) be the component of \(\OO\) cut open along \(F_{1} \cup F_{2}\) containing \(A_{1} \times [0,1]\).  Then \(\partial (\OO' \cap N(\Sigma))\) consists of two bad footballs, 
and each bounds a product region  by Lemma~\ref{Lem:GoodFootballsInNS}. These product regions, together with \(A_{1} \times [0,1]\), show that \(\OO'\) is a product region between \(F_{1}\) and \(F_{2}\).
\end{proof}

The annuli discussed above are not necessarily essential, but we do have:

\begin{lemm}
\label{lem:BadFootballsYieldIncompressibleAnnuli}
Suppose that no component of \(\OO\) admits disjointly embedded teardrops of distinct weights.  Then \(F \cap E(\Sigma)\) is incompressible.  
\end{lemm}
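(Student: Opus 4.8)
The plan is to argue by contradiction: suppose $A = F \cap E(\Sigma)$ is compressible in $E(\Sigma)$, and derive from a compressing disk the existence of two disjointly embedded teardrops of distinct weights, contradicting the hypothesis on $\OO$. First I would fix a compressing disk $D \subset E(\Sigma)$ for $A$, so $\partial D \subset A$ is an essential curve in $A$ (hence a core of the annulus, parallel to each component of $\partial A$), and $D$ is disjoint from $A$ except along $\partial D$. Compressing $A$ along $D$ replaces $A$ by two disks $A'$ and $A''$, each of which is isotopic to a subdisk of $A$ pushed off along $D$; in particular $\partial A'$ and $\partial A''$ are each parallel in $\partial N(\Sigma)$ to one of the two curves of $\partial A$, which by Lemma~\ref{lem:FootballAnnulusNotBoundaryParallel} are essential curves on the boundary of the handlebody $N(\Sigma)$. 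Now cap each of $A'$ and $A''$ off with a meridian disk of $F \cap N(\Sigma)$: since $\partial A$ bounds, in $F \cap N(\Sigma)$, two disks each meeting $\Sigma$ exactly once (one through the weight-$a$ point, one through the weight-$b$ point, as $F$ is a bad-football $S^2(a,b)$), we obtain from $A' \cup (\text{meridian disk})$ and $A'' \cup (\text{meridian disk})$ two 2-spheres in $|\OO|$, each meeting $\Sigma$ in a single point, of weights $a$ and $b$ respectively — that is, two teardrops $S^2(a)$ and $S^2(b)$.

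The key remaining point is disjointness: I need these two teardrops to be realizable disjointly in $\OO$ (then, since $a \neq b$ because $F$ is a \emph{bad}-football, the hypothesis of the lemma is violated). For this I would take the compression to be performed with a bit of care — push $A'$ slightly to one side of $D$ and $A''$ to the other, so that $A'$, $A''$, and the relevant pair of meridian disks in $F \cap N(\Sigma)$ (which are disjoint, since they are the two ``cap'' disks on opposite sides of the football $F$) can all be made mutually disjoint after a small isotopy. Concretely: $F$ cut along its core annulus $A$ gives the two meridian disks $\Delta_a, \Delta_b \subset N(\Sigma)$ with $\Delta_a \cap \Sigma$ a single weight-$a$ point and $\Delta_b \cap \Sigma$ a single weight-$b$ point, and $\Delta_a \cap \Delta_b = \emptyset$ since they lie in the ball $F$ bounds... more precisely $\Delta_a$ and $\Delta_b$ are the two complementary disks of $F$ along $\partial A$ and so are disjoint. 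Pairing $\Delta_a$ with one parallel copy $A'$ of $A$ and $\Delta_b$ with a second parallel copy $A''$, pushed to opposite sides, yields the two disjoint teardrops.

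The main obstacle I anticipate is this bookkeeping of disjointness after compression — making sure the two reconstructed teardrops genuinely live in $\OO$ disjointly rather than merely abstractly existing, and that each picks up a \emph{different} singular weight (this uses crucially that $F$ is a bad-football, so its two singular points have unequal weights $b > a > 1$). Once that is set up, the contradiction with the standing hypothesis ``no component of $\OO$ admits disjointly embedded teardrops of distinct weights'' is immediate, so $A$ must be incompressible. A secondary point to check is that $\partial D$ is indeed a core curve of $A$ and not inessential: if $\partial D$ were trivial in $A$ it would bound a disk $D' \subset A$, and $D \cup D'$ a sphere in the irreducible $E(\Sigma)$ (note $E(\Sigma)$ is irreducible since its boundary is a union of tori/higher-genus surfaces and $\OO$... actually one should just say $D$ is a genuine compressing disk by definition, so $\partial D$ is essential in $A$, hence a core).
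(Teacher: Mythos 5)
Your proposal is correct and follows exactly the paper's argument, which the authors dispatch in one sentence: compressing the annulus $F \cap E(\Sigma)$ and capping with the two disks of $F \cap N(\Sigma)$ yields disjointly embedded teardrops of weights $a$ and $b$, contradicting the hypothesis since $a \neq b$. Your extra care about the disjointness bookkeeping and the essentiality of $\partial D$ is sound but not a departure from the paper's route.
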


\begin{proof}
This is clear, since compressing \(F \cap E(\Sigma)\) would yield disjointly embedded teardrops of weights \(a\) and \(b\), which are distinct as \(F\) is bad.
\end{proof}

\begin{proof}[Proof of Theorem~\ref{thm:Finiteness_of_bad_footballs}]
By Lemmas~\ref{lem:FootballAnnulusNotBoundaryParallel},~\ref{lem:FootballAnnuliNotParallel}, and~\ref{lem:BadFootballsYieldIncompressibleAnnuli} (whose conditions are satisfied by assumption) the intersection of a family of disjoint, pairwise non-parallel bad-footballs with \(E(\Sigma)\) forms a family of disjoint, pairwise non-parallel, non-boundary parallel, incompressible annuli whose number is bounded by~\cite{FreedmanFreedman}.
\end{proof}

%%%%%%%%%%%%%%%%%%%%%%%%%%%%%%%%%%%%%%%%%%%%%%%%%%%%%%%%%%%%%%%%%%%%%

\section{Local Pictures about a Bad-Football}\label{sec:Local Pictures about a Bad-Football}

Although Theorem~\ref{mainthm} requires us to deal with teardrops first, we start with bad-footballs as the discussion is simpler.  In Theorem \ref{localfootball} we will discover three families of compact 3-orbifolds that satisfy the requirements of Theorem~\ref{mainthm}. The very detailed treatment is meant to serve as a warm-up for the more complicated Theorem~\ref{thm:LocalPictureAboutTeardrop}.

\begin{thm}[Local Pictures about a Bad-Football] 
\label{localfootball}
Let \(\OO\) be a closed, orientable 3-orbifold and \(F \subset \OO\) an embedded bad-football.  Then there exists a compact 3-suborbifold $X$ with $F \subset X \subset \OO$ satisfying the following properties (here, \(B^{3}\) is the interior of an embedded smooth closed ball): 
\begin{enumerate}
\item the underlying topological space of $X$ is either:
	\begin{enumerate}
	\item $|X| \cong S^2\times I$ (corresponding to Form $1$), or
	\item $|X| \cong (S^2\times S^1)\backslash B^3$ (corresponding to Form $2$ or Smooth Form below),
	\end{enumerate}
\item $\partial X$ consists of one or two spherical 2-orbifolds, 
\item the orbifold structure is one of the forms listed in Figures~\ref{figure:football:SmoothForm}--\ref{figure:football:Form2} below.
\end{enumerate}

\begin{figure}[h]
\psfrag{t}{\scl[.7][$S^2(a,b)\times I$]}
\psfrag{3}{\scl[.7][$b$]}
\psfrag{2}{\scl[.7][$a$]}
\centerline{\includegraphics[scale=0.27]{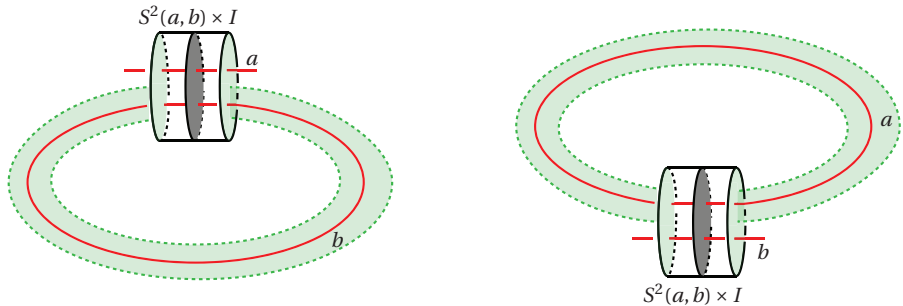}}
\caption{Smooth Form}
\label{figure:football:SmoothForm}
\end{figure}

\begin{figure}[h]
\psfrag{1}{\scl[.7][$S^2(a,a_2,a_3)$]}
\psfrag{3}{\scl[.7][$S^2(a,a_2',a_3')$]}
\psfrag{2}{\scl[.7][$S^2(a,b)$]}
\psfrag{4}{\scl[.7][$\vm$]}
\psfrag{5}{\scl[.7][$\vp$]}
\centerline{\includegraphics[scale=0.18]{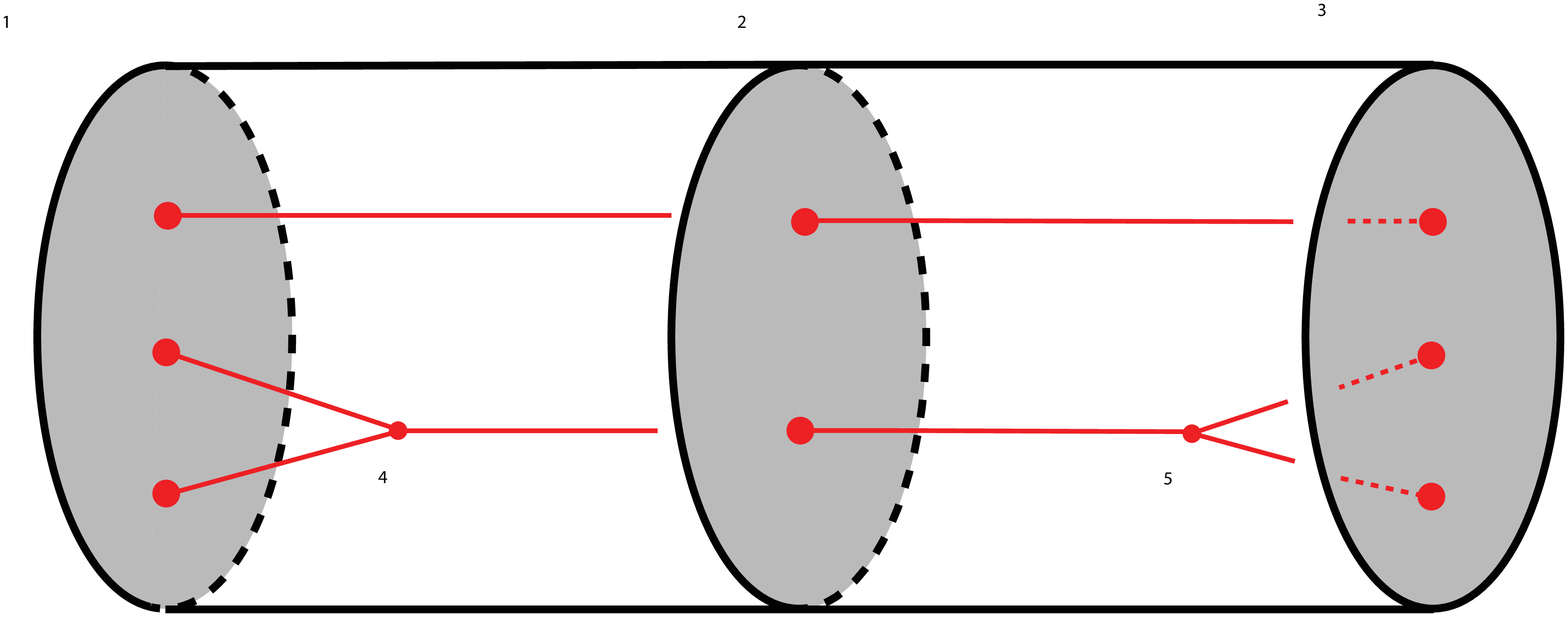}}
\caption{Form~1}
\label{figure:football:Form1}
\end{figure}

\begin{figure}[h!]
\psfrag{t}{\scl[.7][$S^2(2,3)\times I$]}
\psfrag{3}{\scl[.7][$b=3$]}
\psfrag{2}{\scl[.7][$a^{*}=2$]}
\psfrag{a}{\scl[.7][$a=2$]}
\centerline{\includegraphics[scale=0.25]{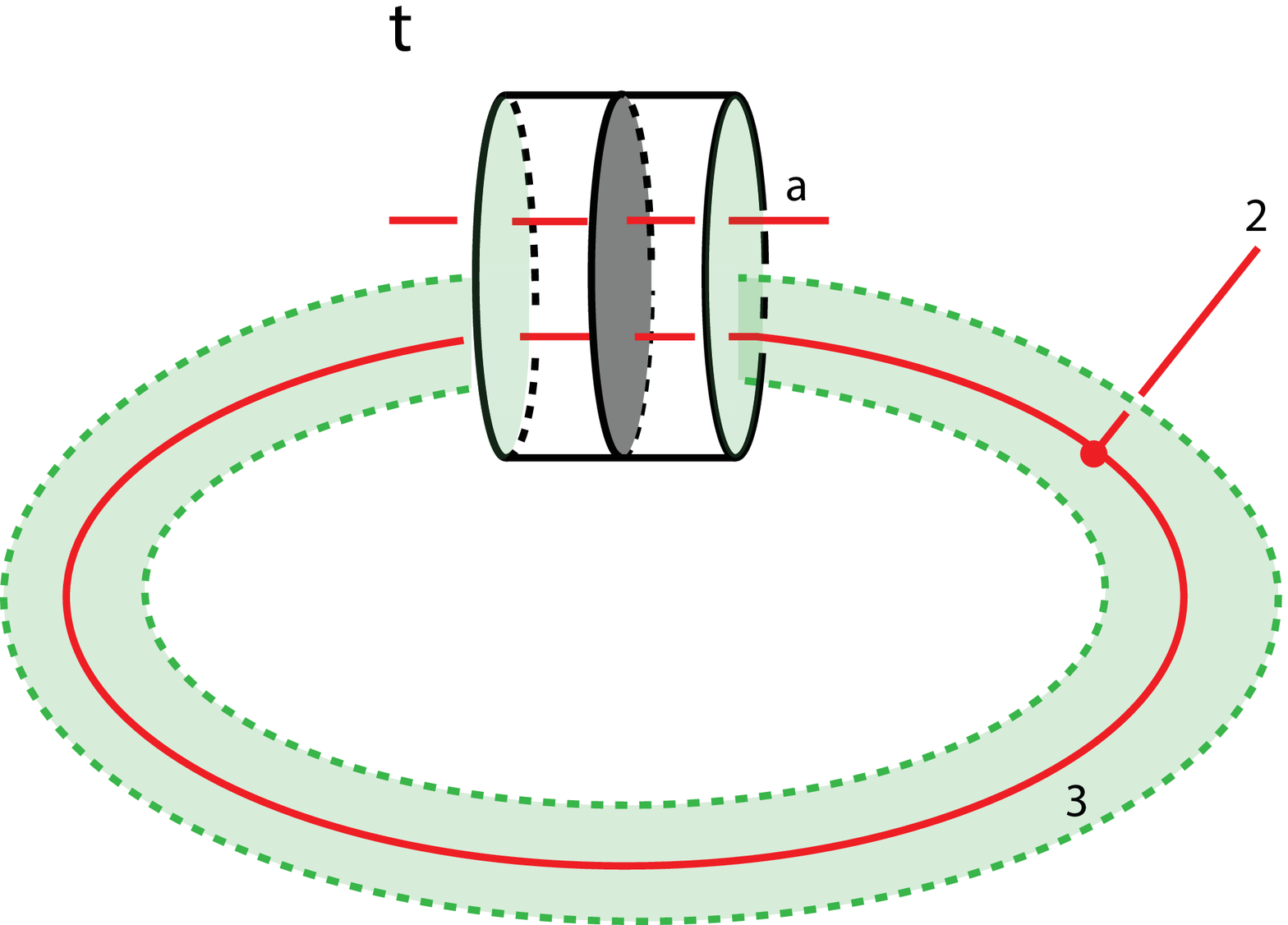}}
\caption{Form~2}
\label{figure:football:Form2}
\end{figure}
\end{thm}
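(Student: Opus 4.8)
The plan is to start from the bad-football $F\subset\OO$ and analyze how it meets the canonical pieces $N(\Sigma)$ and $E(\Sigma)$, following the pattern already set up in Section~\ref{sec:Bad-Footballs are Bounded}. Write $F = (F\cap N(\Sigma)) \cup A$ where $A = F\cap E(\Sigma)$ is an annulus whose core is essential in $\partial N(\Sigma)$ (Lemma~\ref{lem:FootballAnnulusNotBoundaryParallel}). The two singular points of $F$ lie on edges of $\Sigma$ of weights $a<b$. I would first dispose of the degenerate case where $A$ is \emph{compressible} in $E(\Sigma)$: then compressing produces two teardrops $S^2(a)$ and $S^2(b)$, and re-tubing along the compressing disc together with a neighborhood of the two singular edges gives an $X$ with $|X|\cong (S^2\times S^1)\setminus B^3$ whose boundary is a single sphere-orbifold; analyzing which weights can occur and invoking Lemma~\ref{firstcut} to see the boundary is spherical should land exactly in the Smooth Form or Form~2. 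The Smooth Form arises when the two singular edges are in fact a single edge running between the two marked points so that $\Sigma\cap X$ is an unknotted arc of constant weight and $X\cong S^2(a,b)\times I$ locally near $\Sigma$ but actually closes up; Form~2 is the special numerical case $(a,b)=(2,3)$ where a vertex of $\Sigma$ is forced, giving the $a^*=2$ labelling.

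When $A$ is incompressible, the two components of $\partial A$ bound meridian discs of $N(\Sigma)$ hitting $\Sigma$ in single points of weights $a$ and $b$; but since $F$ is a sphere, the annulus $A$ together with these two discs separates $\OO$, and I would take $X$ to be a neighborhood of $F$ together with whatever piece of $\Sigma$ is "trapped" near $F$ that is forced to be there for $F$ to be non-parallel and embedded. The key point is that the two singular edges that meet $F$ must reconnect somewhere, and the simplest configuration — they meet at two vertices, or loop back — produces, after taking a regular neighborhood and capping, a 3-orbifold with underlying space $S^2\times I$ and two spherical boundary orbifolds $S^2(a,a_2,a_3)$ and $S^2(a,a_2',a_3')$, which is precisely Form~1. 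Here Lemma~\ref{firstcut} is again what guarantees the boundary orbifolds are spherical: the new edge of weight $a$ created by the cut-and-cap meets the old edges of weights $a_2,a_3$ (resp.\ $a_2',a_3'$) at a vertex, and the vertex inequality forces $1/a+1/a_2+1/a_3>1$.

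The organizing principle, then, is a case split on (i) whether $A$ compresses, and (ii) the local combinatorics of $\Sigma$ near the two marked edges — do they coincide, do they meet at vertices, do they loop. In each case I would (a) identify $X$ as an explicit regular neighborhood, (b) compute $|X|$ by a Mayer–Vietoris / handle-attachment argument on $|N(\Sigma)|$ and $|E(\Sigma)|$ (the annulus $A$ attached to the handlebody $|N(\Sigma)|$ along an essential curve either gives $S^2\times I$ or, after the extra disc from compression, $(S^2\times S^1)\setminus B^3$), and (c) check the boundary orbifolds against the list on page~\pageref{2orbifolds}, which is where Lemma~\ref{firstcut} does all the work. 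I expect the main obstacle to be \emph{completeness}: showing that the handful of pictures in Figures~\ref{figure:football:SmoothForm}--\ref{figure:football:Form2} really exhausts all possibilities, i.e.\ that no more exotic configuration of $\Sigma$ near $F$ can occur while keeping $\partial X$ spherical and $|X|$ one of the two allowed spaces — this is a finite but fiddly enumeration, and pinning down the numerically special Form~2 (forced by the spherical-boundary constraint when small weights appear) without missing a case is the delicate part. The irreducibility of $|N(\Sigma)|$ and Kneser--Haken-type finiteness are not needed here; what is needed is a careful local analysis plus the arithmetic of the spherical-orbifold inequality.
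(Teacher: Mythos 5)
Your organizing dichotomy --- whether the annulus $A = F\cap E(\Sigma)$ compresses in $E(\Sigma)$ --- is not the one that determines which form arises, and following it leads to a genuine gap. The paper's case division is entirely combinatorial and concerns the \emph{heavier} of the two edges of $\Sigma$ meeting $F$ (call it $e$, of weight $b>a$): either $e$ (or $f$) is a smooth circle (Smooth Form), or $e$ has two distinct endpoints $\vp\neq\vm$ (Form~1), or $e$ is a loop at a single vertex $\vp=\vm$ (Form~2). In every case $X$ is simply $N(F\cup e)$ (or $N(F\cup f)$ when only $f$ is smooth) --- never a neighborhood of \emph{both} singular edges, and never anything involving a compressing disc. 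Compressibility of $A$ is independent of this trichotomy: $e$ can be a smooth circle while $A$ is incompressible, and $\vp\neq\vm$ can hold while $A$ compresses, so your case (i) neither isolates the Smooth Form/Form~2 nor excludes Form~1. Moreover your construction in the compressible case ("re-tubing along the compressing disc together with a neighborhood of the two singular edges") is not carried out and, as described, fails: a neighborhood of both edges can be a large, complicated piece of $\Sigma$ whose boundary has no reason to be spherical, and it is not one of the three listed forms.

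The point you flag as "the main obstacle" --- completeness of the enumeration --- is precisely what your proposal does not supply, whereas in the paper it is automatic: the trichotomy on $e$ ($e$ or $f$ smooth / $\vp\neq\vm$ / $\vp=\vm$) is tautologically exhaustive, and each branch comes with an explicit $X$, an immediate computation of $|X|$, and a verification of sphericity. You do correctly identify the two load-bearing facts --- Lemma~\ref{firstcut} applied with the lighter weight $a$ substituted for $b$ at a vertex of $\Sigma$, and the arithmetic of the vertex inequality forcing $(a,b,a^*)=(2,3,2)$ in the looped case --- but you attach the latter to the wrong case (it is forced by $\vp=\vm$, not by compressibility), and you add an unnecessary and false claim in the incompressible case that the two singular edges "must reconnect somewhere" (the lighter edge $f$ plays no role in Form~1 at all). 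To repair the argument, drop the compressibility analysis entirely and redo the case split on the local structure of the heavier edge $e$.
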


\begin{proof}
\setcounter{case}{0}
A flow chart for the proof of this theorem can be found in Figure \ref{FlowchartForFootballs}. 
\begin{figure}[h!]
\psfrag{1}{\scl[.7][Yes]}
\psfrag{2}{\scl[.7][No]}
\psfrag{a}{\scl[.7][Input]}
\psfrag{b}{\scl[.7][Bad-Football]}
\psfrag{c}{\scl[.7][Is $e$ or $f$]}
\psfrag{j}{\scl[.7][smooth?]}
\psfrag{e}{\scl[.7][Smooth]}
\psfrag{f}{\scl[.7][Form]}
\psfrag{g}{\scl[.7][Form 1]}
\psfrag{h}{\scl[.7][Form 2]}
\psfrag{i}{\scl[.7][\(\vp \stackrel{?}{=} \vm\)]}
\centerline{\includegraphics[scale=0.7]{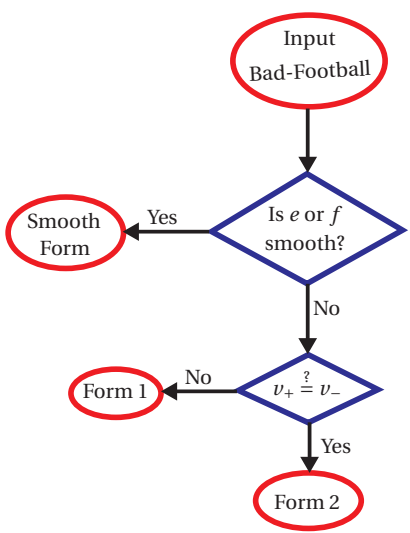}}
\caption{The flowchart diagraming the proof of Theorem \ref{localfootball}}
\label{FlowchartForFootballs}
\end{figure}
Suppose that \(F \cong S^{2}(a,b)\) with \(b>a\).
Let \(e\) and \(f\) be the edges of the singular set \(\Sigma\) that intersect \(F\) whose weights are \(b\) and \(a\) respectively (so \(e\) is heavier than \(f\)).

\begin{case}
\textbf{$\boldsymbol{e}$ or $\boldsymbol{f}$ is smooth:}
See Figure ~\ref{figure:football:SmoothForm}.
We set \(X\) to be \(N\left(S^{2}(a,b) \cup e\right)\) if \(e\) is smooth and  \(N\left(S^{2}(a,b) \cup f\right)\) if \(f\) is smooth and \(e\) is not; choosing \(e\) when both are smooth is not necessary but does make our construction of \(X\) canonical.  Clearly, $|X| \cong \left(S^2\times S^1\right)\backslash B^3$ and $\partial X=S^2(a,a)$ if $e$ is smooth and $S^2(b,b)$ otherwise.  Thus \(\partial X\) is a single good-football (which is spherical).  This is the \em Smooth Form.\em
\end{case}

\bigskip\noindent
We assume from now on that neither \(e\) nor \(f\) is smooth.  To organize the remaining cases we construct the tree \(\chi\) and the map \(\omega:\chi \to \Sigma\); this is more than we need for dealing with bad-footballs but serves to prepare the reader for Theorem~\ref{thm:LocalPictureAboutTeardrop}.  (It may be helpful to note that \(\chi\) is a subspace of the universal cover of a component of \(\Sigma\) and \(\omega\) is the restriction of the covering projection.)

\bigskip
\noindent
{\bf Construction of \(\boldsymbol{\chi}\) and \(\boldsymbol{\omega}\).} 
\label{ConstructionChiOmega}
Let \(\chi\) be the tree that consists of the six vertices and the five edges as shown in Figure~\ref{figure:chiForFootball}. Next we construct a map \(\omega:\chi \to \Sigma\); \(\omega\) is required to map the interior of every edge of \(\chi\)  homeomorphically  onto the interior of an edge of \(\Sigma\). We first map \(e'\) to \(e\) (with either orientation); this can be done since \(e\) is not smooth.  We then extend the map to \(\ep'\) and \(\oep'\), mapping them to edges adjacent to \(\omega(\vp')\) in such a way that \(\omega\) is locally injective; this can be done because the valence of the vertices of \(\Sigma\) is three. We similarly extend \(\omega\) to \(\eem'\) and \(\oem'\), obtaining \(\omega:\chi \to \Sigma\) that is locally injective although it may well fail to be injective.  We denote images under \(\omega\) by dropping the primes, for example, \(\omega(\vp') = \vp\) (thus, for example, when \(e\) forms a simple closed curve we have that \(\vp = \vm\) although  \(\vp' \not= \vm'\)).
\def\sizee{.7}
\begin{figure}[h!]
\psfrag{1}{\scl[\sizee][$\vm'$]}
\psfrag{2}{\scl[\sizee][$\vp'$]}
\psfrag{3}{\scl[\sizee][\(\oem'\)]}
\psfrag{4}{\scl[\sizee][\(\eem'\)]}
\psfrag{5}{\scl[\sizee][\(\ovmm'\)]}
\psfrag{6}{\scl[\sizee][\(\vmm'\)]}
\psfrag{7}{\scl[\sizee][\(\vpp'\)]}
\psfrag{8}{\scl[\sizee][\(\ep'\)]}
\psfrag{9}{\scl[\sizee][\(\oep'\)]}
\psfrag{a}{\scl[\sizee][\(\ovpp'\)]}
\psfrag{f}{\scl[\sizee][$e'$]}
\hspace{-1cm}
\centerline{\includegraphics[scale=0.25]{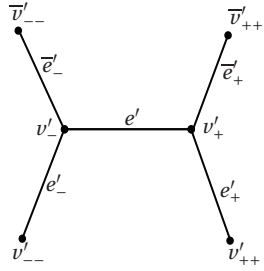}}
\caption{The tree $\chi$}
\label{figure:chiForFootball}
\end{figure}

\begin{remark}{\rm
\label{remark:embedding_in_neighborhood}
Since \(\omega\) is locally injective, whenever the restriction of \(\omega\) to a sub-tree \(K \subset \chi\) is injective, we have that \(\omega|_{N(K)}:N(K) \to \Sigma\) is injective as well, where here \(N(K)\) is a sufficiently small normal neighborhood of \(K\). This requires only local injectivity of \(\chi\) and compactness of \(K\) and will also hold for \(\chi\) and \(\omega\) constructed in the proof of Theorem~\ref{thm:LocalPictureAboutTeardrop}.}
\end{remark}

\bigskip
\noindent Having constructed \(\chi\) and \(\omega\) we now return to the proof of Theorem~\ref{localfootball} (under the assumption that \(e\) and \(f\) are not smooth).

\bigskip

\begin{case}
\textbf{\(\boldsymbol{\vp \neq \vm}\).}
See Figure~\ref{figure:football:Form1}, where \(e\) is the bottom edge.
We set $X \subset \OO$ to be  \(N\left(F \cup e \right)\). The assumption that \(\vp \neq \vm\) implies that \(\omega\) embeds \(N(e')\) into \(\Sigma\) (recall Remark~\ref{remark:embedding_in_neighborhood}), and hence \(|X| \cong S^{2} \times I\).  
Next we show that the boundary component near \(\vm\) is spherical. 
Let \(b\), $a_2$ and $a_3$ be the weights of the edges adjacent to \(\vm\) (recall that \(b\) is the weight of \(e\)).  Then the component of \(\partial X\) near \(\vm\) is isomorphic to \(S^{2}(a,a_{2},a_{3})\). Since  \(b\), $a_2$ and $a_3$ are the weights at a vertex and \(b >a\), by Lemma~\ref{firstcut} we have that \(S^{2}(a,a_{2},a_{3})\) is spherical. 
Similarly, the component of \(\partial X\) near \(\vp\) (denoted \(S^{2}(a,a_{2}',a_{3}')\) in the Figure~\ref{figure:football:Form1}) is spherical as well.
This is \em Form~1\em.
\end{case}

We have reduced the proof to the case where neither \(e\) nor \(f\) is smooth and ---

\begin{case}
\label{FootballCase3}
\textbf{\(\boldsymbol{\vp = \vm}\).}
See Figure~\ref{figure:football:Form2}, where \(e\) is the bottom edge.
The assumption \(\vp = \vm\) implies that \(e\) forms a simple closed curve that contains exactly one vertex \(v = \vp = \vm\).  We set \(X\) to be \(N\left(F \cup e\right)\).  Similar to the Smooth Form, we clearly see that $|X| \cong \left(S^2\times S^1\right)\backslash B^3$.  

It remains to show that \(\partial X\) is spherical.  Let \(a^{*}\) denote the weight of the edge connected to \(e\) at \(v\).
Clearly, \(\partial X \cong S^{2}(a,a,a^{*})\).  The weights at vertex \(v\) are \(b,b,a^{*}\) and satisfy
\begin{equation}
\label{equation:TheWeightsAtV}
\frac{1}{b} + \frac{1}{b} + \frac{1}{a^{*}} > 1
\end{equation}
This forces \(b<4\); moreover, as \(b>a>1\), we have that \(b=3\) and \(a=2\).  Equation~\eqref{equation:TheWeightsAtV} (together with \(b=3\)) forces \(a^{*} = 2\), and we conclude that
\[
\partial X \cong
S^{2}(a,a,a^{*}) \cong 
S^{2}(2,2,2)
\]
which is indeed spherical as required.  This is \em Form~2.\em
\end{case}
This completes the proof of Theorem~\ref{localfootball}.
\end{proof}

\begin{remark}{\rm
\label{rmk:MoreThanOneCase}
We could have set things up differently. For example, the assumption in Case~1 could have been \em \(e\) is smooth\em, since smoothness of \(f\) was not used in cases~(2) and~(3). We chose the specific structure (as reflected in the flowchart in Figure~\ref{FlowchartForFootballs}) to get a \em canonical \em construction and a streamlined proof, but other (equally canonical) choices are available.
}\end{remark}

%%%%%%%%%%%%%%%%%%%%%%%%%%%%%%%%%%%%%%%%%%%%%%%%%%%%%%%%%%%%%%%%%%%%%%%%%%%%%

\section{Local Pictures about a Teardrop}\label{sec:Local Pictures about a Teardrop}
In Theorem~\ref{localteardrop} we will show that every embedded teardrop in a closed, orientable 3-orbifold is contained in a compact 3-suborbifold satisfying the requirement of Theorem~\ref{mainthm}.  Before we delve into the intricacies of this theorem and its proof let us see the construction in an explicit example: 

\begin{figure}[h!]
\psfrag{s}{\scl[.7][$T \cong S^2\left(3\right)$]}
\psfrag{2}{\scl[.7][$2$]}
\psfrag{3}{\scl[.7][$3$]}
\psfrag{4}{\scl[.7][$4$]}
\psfrag{r}{\scl[.7][$N\left(\text{right}\right)$]}
\psfrag{l}{\scl[.7][$N\left(\text{left}\right)$]}
\centerline{\includegraphics[scale=0.3]{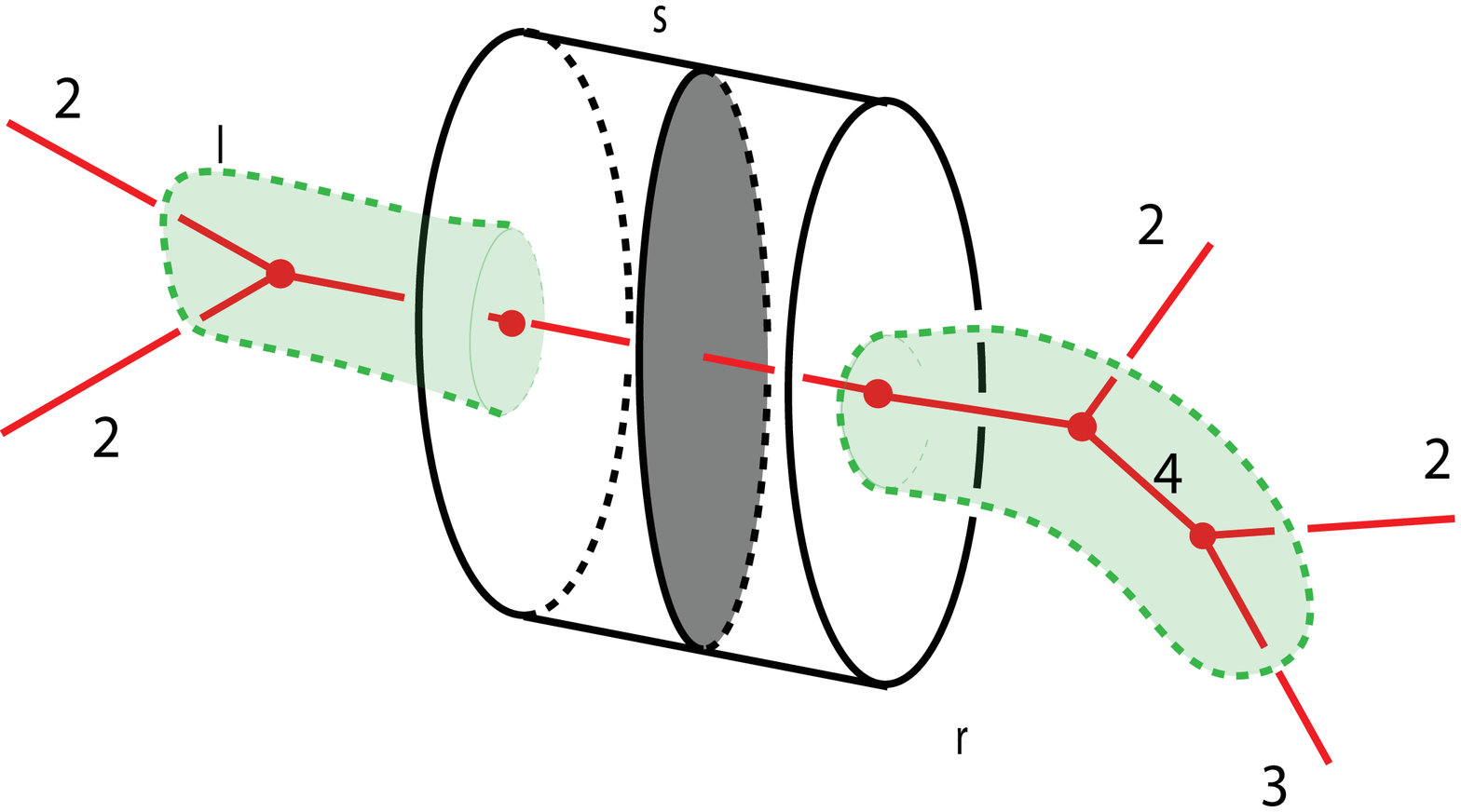}}
\caption{$X=N\left(\text{left}\right)\cup 
N\left(S^{2}(3)\right)
\cup N\left(\text{right}\right)$}
\label{teardropex}
\end{figure}
\begin{ex}
For this example see Figure~\ref{teardropex}. 
Let \(T \subset \OO\) be a teardrop in a closed orientable orbifold, \(T \cong S^{2}(3)\), and let \(e\) be the edge of \(\Sigma\) that intersects \(S^{2}(3)\) (thus the weight of \(e\) is 3).  In our example the endpoints of \(e\) are distinct, say \(\vp \neq \vm\).
We begin the construction of \(X\) with \(N\left(S^{2}(3)\right) \cong S^{2}(3) \times I\).  Next we travel to the right of $N\left(S^{2}(3)\right)$ along $e$ until we reach \(\vp\), where \(\Sigma\) branches out into two edges weights $2$ and $4$. We continue to travel along the heavier edge, of weight $4$, until we meet another vertex 
(denoted \(\vpp\) in the construction below) 
which branches out into two edges of weights $2$ and $3$.
Then we consider a normal neighborhood of the path traveled which we denote by $N(\text{right})$ (green in Figure~\ref{teardropex}). Next, we travel to the left of $N\left(S^{2}(3)\right)$ along $e$ until we reach \(\vm\), where \(\Sigma\) branches out into two edges of the same weight, $2$. Again we consider a normal neighborhood of the path traveled which we denote by $N(\text{left})$.
We set $X$ to be $N(\text{left}) \cup N\left(S^{2}(3)\right) \cup N(\text{right})$ (\(X\) is an explicit example of \em Form~2 \em from Theorem \ref{localteardrop}). Note that $|X| \cong S^2\times I$ and $\partial X$ consisits of two components, isomorphic to $S^2(2,2)$ and $S^2(2,2,3)$, which are both spherical 2-orbifolds.  

The reader may question our choices, in particular when traveling right: 
\begin{itemize}
\item If we stopped at \(\vp\) we would get  \(S^{2}(2,4)\) as a boundary component.
\item If we continued past \(\vp\) along the lighter edge (of weight \(2\)) past a vertex with weights, say, \(2,a,b\), we would get a boundary component \(S^{2}(4,a,b)\).  
\end{itemize}
This is a problem: \(S^{2}(2,4)\) is not spherical, and it is distinctly possible (depending on \(a,b\)) that \(S^{2}(4,a,b)\) is not spherical.  As discussed in the introduction, having spherical boundary components is an essential feature of Theorem~\ref{mainthm} without which we cannot possibly hope to \em cut and cap\em\ ({\it cf.} the choice of the heavier edge in Case~(\ref{FootballCase3}) of the proof of theorem~\ref{localfootball}).
\label{example}
\end{ex}

\begin{thm}[Local Pictures about a Teardrop]
\label{thm:LocalPictureAboutTeardrop}
Let $\OO$ be a closed, orientable 3-orbifold and $T \subset \OO$ an embedded teardrop. 
Then there exists a compact 3-orbifold $X$,  $T \subset X \subset \OO$, satisfying the following properties (here, \(B^{3}\) is the interior of an embedded smooth closed ball): 
\begin{enumerate}
\item for the underlying topological space $|X|$ we have either:
	\begin{enumerate}
	\item $|X| \cong S^2\times I$ (corresponding to Form $1,2$, or $3$ below), or
	\item $|X| \cong (S^2\times S^1)\backslash B^3$ (corresponding to Form $4,5,6$ or Smooth Form below),
	\end{enumerate}
\item  $\partial X$ consists of one or two spherical 2-orbifolds, and
\item the orbifold structure is one of the forms listed in Figures~\ref{figure:TearDropSmoothForm}--\ref{figure:TearDropForm6}.
\end{enumerate}

\bigskip

\def\LabelSize{.65}

\begin{figure}[h!]
\psfrag{s}{\scl[\LabelSize][$S^2(a)\times I$]}
\centerline{\includegraphics[scale=0.15]{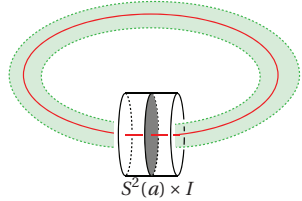}}
\caption{\noindent \cob[Smooth Form]}
\label{figure:TearDropSmoothForm}
\end{figure}

\bigskip

\begin{figure}[h!]
\psfrag{3}{{\scl[\LabelSize][$S^2(\ap,\oap)$]}}
\psfrag{1}{{\scl[\LabelSize][$S^2(\am,\oam)$]}}
\psfrag{2}{{\scl[\LabelSize][$S^2(a)$]}}
\centerline{\includegraphics[scale=0.12]{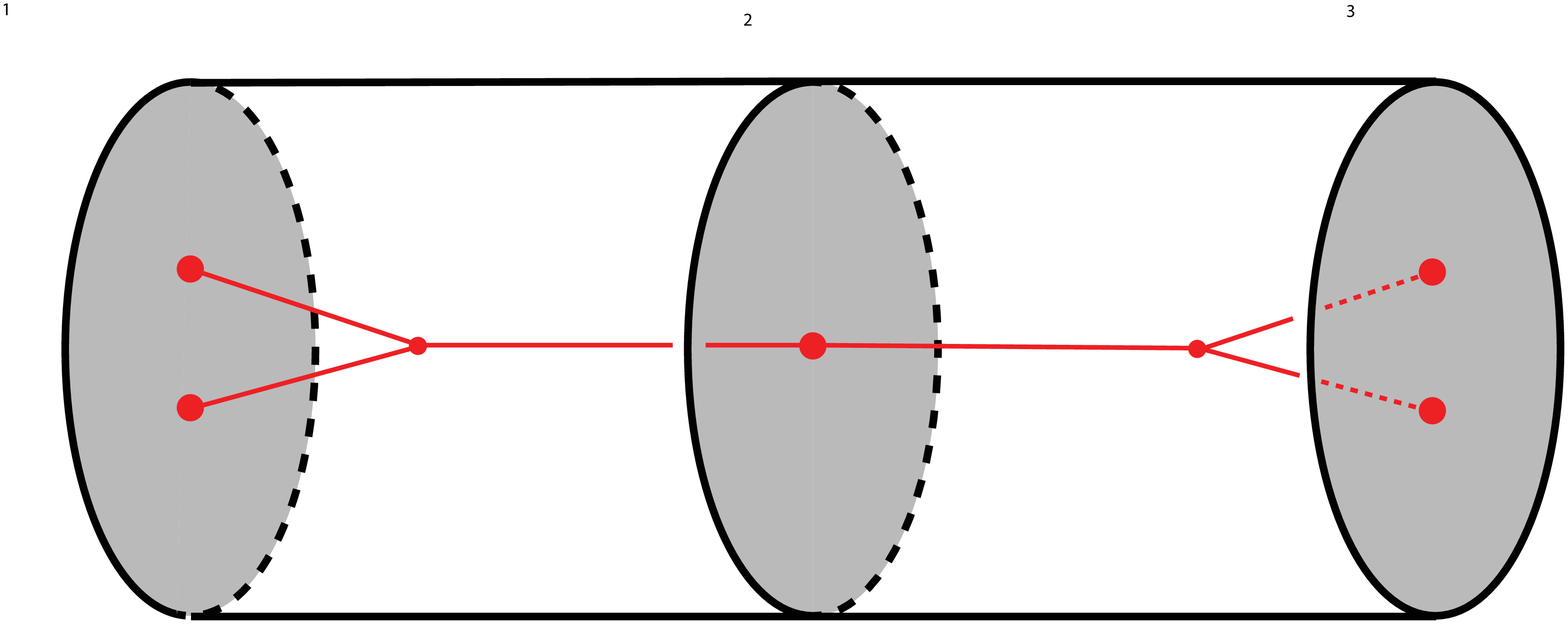}}
\caption{\cob[Form 1] (\(\ap=\oap\) and \(\am=\oam\))} 
\label{figure:form1}
\end{figure}

\bigskip

\begin{figure}[h!]
\psfrag{1}{{\scl[\LabelSize][$S^2(\ap,\oap)$]}}
\psfrag{3}{{\scl[\LabelSize][$S^2(\oam,a_{2},a_{3})$]}}
\psfrag{2}{{\scl[\LabelSize][$S^2(a)$]}}
\centerline{\includegraphics[scale=0.12]{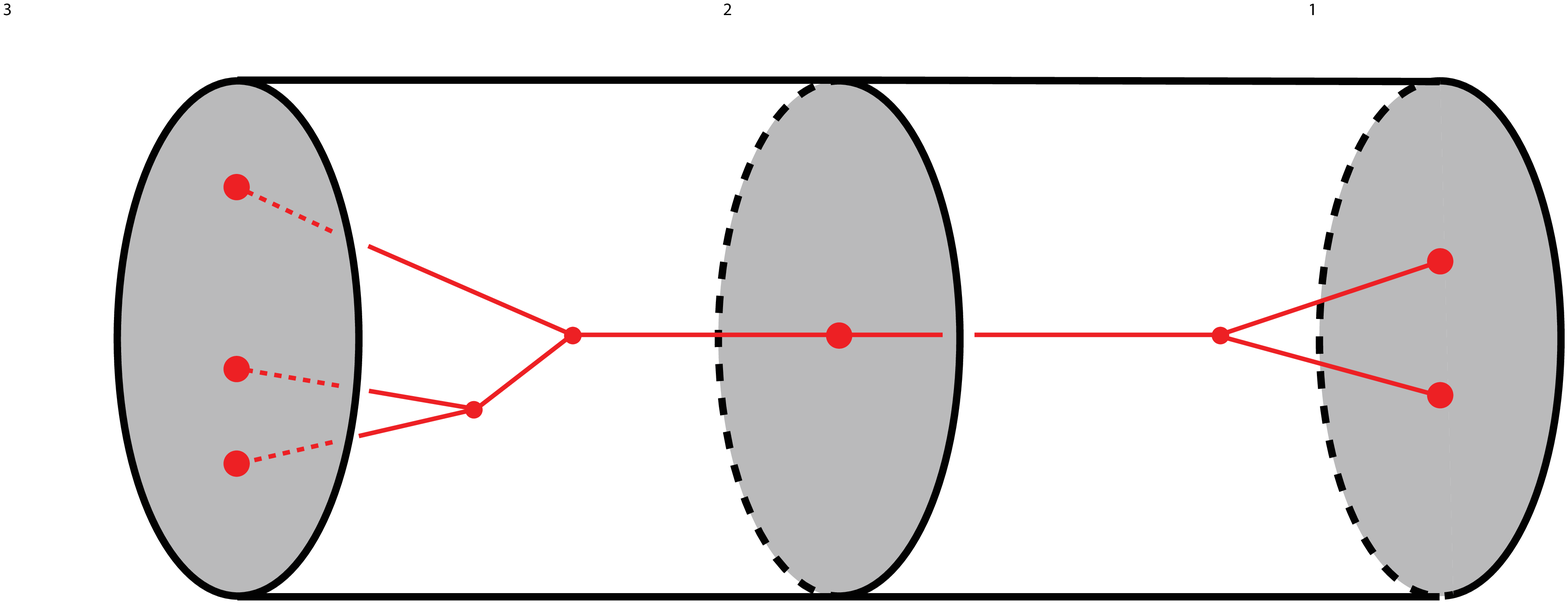}}
\caption{\cob[Form 2] (\(\ap=\oap\))}
\label{figure:form2}
\end{figure}

\bigskip

\begin{figure}[h!]
\psfrag{1}{{\scl[\LabelSize][$S^2(\oam,a_{2}',a_{3}')$]}}
\psfrag{3}{{\scl[\LabelSize][$S^2(\oap,a_{2},a_{3})$]}}
\psfrag{2}{{\scl[\LabelSize][$S^2(a)$]}}
\centerline{\includegraphics[scale=0.12]{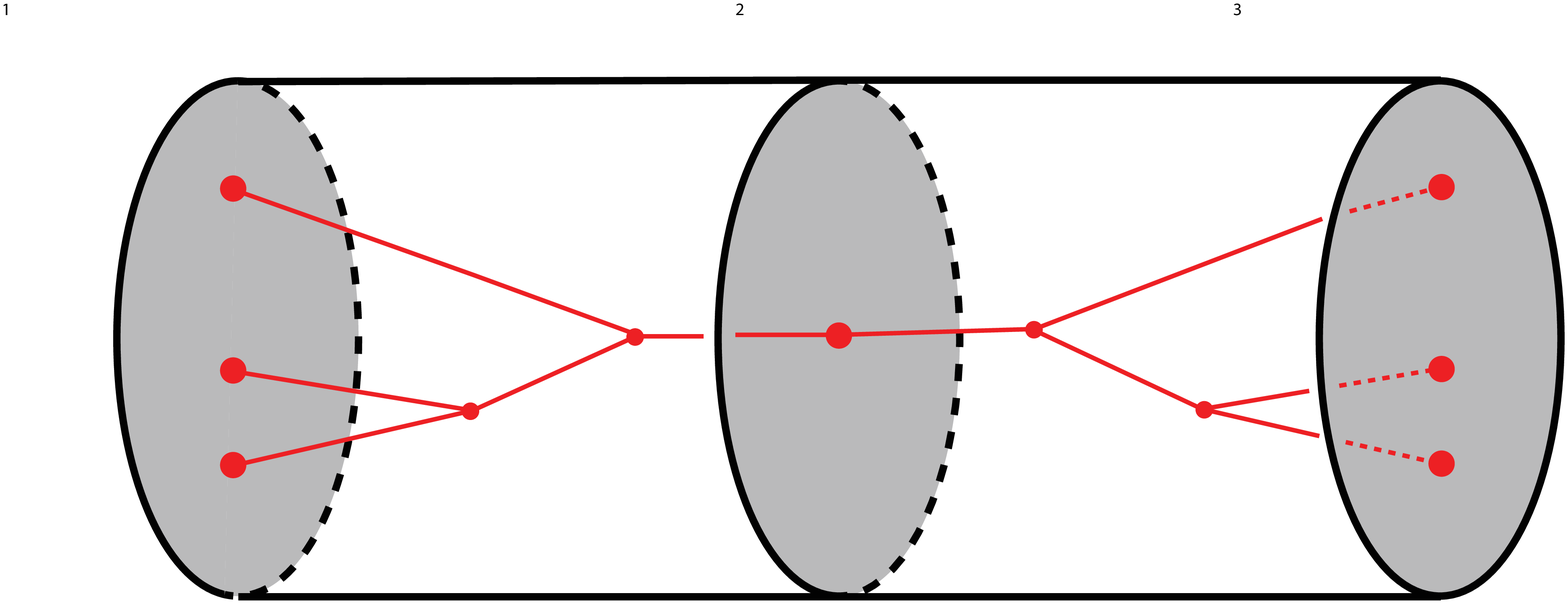}}
\caption{\cob[Form 3]}
\label{figure:form3}
\end{figure}

\begin{figure}[h!]
\psfrag{e}{\scl[\LabelSize][$S^2(a)\times I$]}
\psfrag{n}{\scl[\LabelSize][$\ap$]}
\psfrag{m}{\scl[\LabelSize][$m$]}
\psfrag{r}{\scl[\LabelSize][$\app$]}
\psfrag{o}{\scl[\LabelSize][$\oapp < \app$]}
\psfrag{O}{\scl[\LabelSize][$\oapp = \app$]}
\psfrag{V}{\scl[\LabelSize][$\vpp$]}
\psfrag{v}{\scl[\LabelSize][$v$]}
\psfrag{p}{\scl[\LabelSize][$a_1$]}
\psfrag{q}{\scl[\LabelSize][$a_2$]}
\centerline{\includegraphics[scale=0.2]{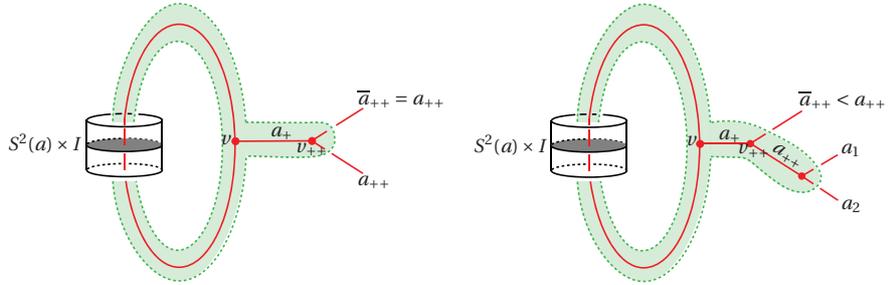}}
\caption{\cob[Forms 4.a (left) and 4.b (right)]}
\label{figure:form4}
\end{figure}

\begin{figure}[h!]
\psfrag{2}{\scl[\LabelSize][$a_{1} < a^{*}_{1}$]}
\psfrag{3}{\scl[\LabelSize][$a_{1}^{*}$]}
\psfrag{1}{\scl[\LabelSize][$\vp$]}
\psfrag{4}{\scl[\LabelSize][$\vm$]}
\psfrag{5}{\scl[\LabelSize][$e^{*}$]}
\psfrag{a}{}
\psfrag{b}{\scl[\LabelSize][$a_{2}$]}
\psfrag{c}{\scl[\LabelSize][$a_{3}$]}
\psfrag{s}{\scl[\LabelSize][$S^2(2)\times I$]}
\psfrag{m}{\scl[\LabelSize][$a_{1}$]}
\psfrag{M}{\scl[\LabelSize][$a_{1}$]}
\psfrag{n}{}
\psfrag{t}{\scl[\LabelSize][$S^2(a)\times I$]}
\psfrag{w}{}
\psfrag{v}{}
\centerline{\includegraphics[scale=0.2]{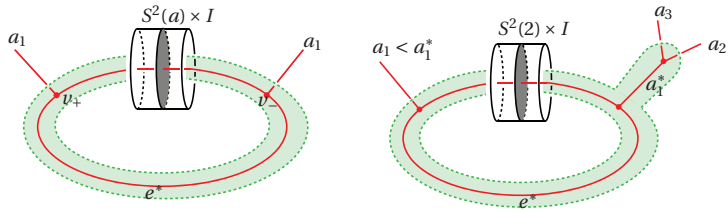}}
\caption{\cob[Forms 5.a (left) and 5.b (right)]}
\label{figure:form5}
\end{figure}

\begin{figure}[h]
\hspace{-4cm}
\psfrag{b}{\scl[\LabelSize][$\oap = 2\text{ or }3$]}
\psfrag{d}{\scl[\LabelSize][$\oam = 2$]}
\psfrag{s}{\scl[\LabelSize][$S^2(a)\times I$]}
\psfrag{a}{\scl[\LabelSize][$\ap = 3, 4 \text{ or }5$]}
\psfrag{A}{\scl[\LabelSize][$a \leq 5$]}
\psfrag{f}{\scl[\LabelSize][$\am = 3$]}
\psfrag{v}{\scl[\LabelSize][$\vp$]}
\psfrag{c}{\scl[\LabelSize][$\vmm=\vpp$]}
\psfrag{u}{\scl[\LabelSize][$\vm$]}
\psfrag{e}{\scl[\LabelSize][$a^{*} = 2$]}
\begin{center}
\centerline{\includegraphics[scale=0.2]{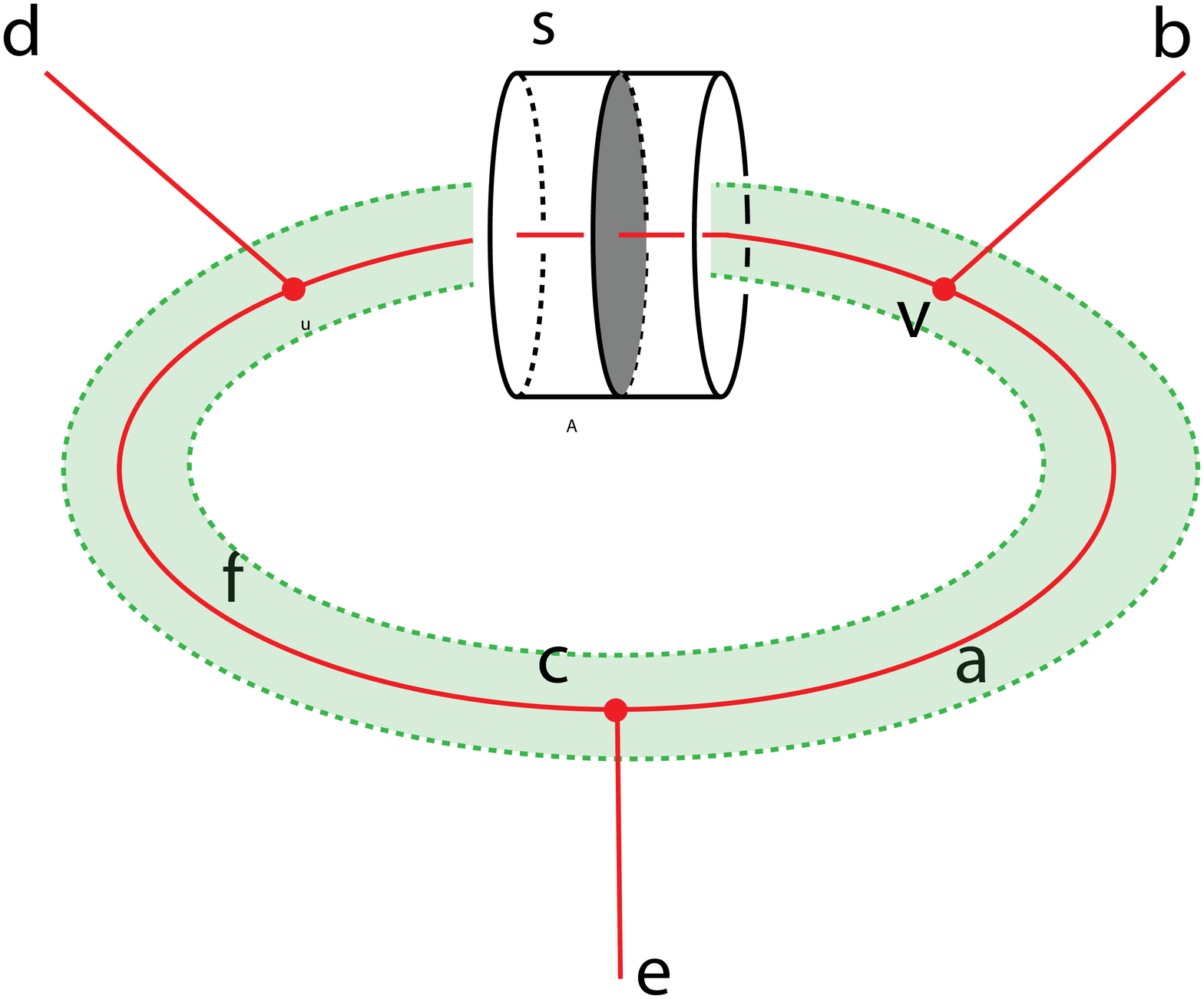}}
\end{center}
\caption{\cob[Form 6] }
\label{figure:TearDropForm6}
\end{figure}
\label{localteardrop}
\end{thm}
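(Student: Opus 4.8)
The plan is to follow the template of the proof of Theorem~\ref{localfootball}, with a longer but similarly organized case analysis (again best presented as a flowchart). Let $e$ be the edge of $\Sigma$ meeting $T$, of weight $a$, and let $p=e\cap T$ be $T$'s singular point. As in the football case one builds an enlarged tree $\chi$ and a locally injective map $\omega\colon\chi\to\Sigma$ with a distinguished lift $e'$ of $e$ (enlarged because, as explained below, when $e$ is a loop the construction must reach two vertices away from $e$), and uses Remark~\ref{remark:embedding_in_neighborhood} throughout. The orbifold $X$ will always be $N(T\cup K)$ for a connected subcomplex $K=\omega(K')\subseteq\Sigma$ containing $e$, with $K'\subseteq\chi$ a subtree containing $e'$; then $T\subset X$ is automatic. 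The governing bookkeeping fact is: for such an $X$, every component of $\partial X$ is a $2$-sphere whose singular points are exactly the weights of the edges of $\Sigma$ adjacent to $K$ but not lying in $K$ (``exit edges'') on the corresponding side of $T$; moreover $|X|\cong S^2\times I$ when $\omega|_{K'}$ is injective (then $T\cup K$ is a sphere pierced by an embedded tree, whose regular neighborhood is $S^2\times I$) and $|X|\cong(S^2\times S^1)\backslash B^3$ when $\omega(K')$ contains a single cycle through $T$ (one $1$-handle is added). Hence, to make $\partial X$ spherical, each side of $T$ must have $0$ exit edges, or $2$ of equal weight (a good football), or $3$ whose weights satisfy $1/a_1+1/a_2+1/a_3>1$ --- never $1$ (a teardrop) and never $4$ or more. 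Consequently, barring coincidences, $K$ travels through at most one vertex beyond each endpoint of $e$. I would first dispose of the Smooth Form: if $e$ is a smooth simple closed curve, take $X=N(T\cup e)$, so $|X|\cong(S^2\times S^1)\backslash B^3$ and $\partial X$ is a smooth $S^2$.

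Now assume $e$ is not smooth, so it ends at vertices $\vp$ and $\vm$, possibly equal, and split first on $\vp\stackrel{?}{=}\vm$. Suppose $\vp\neq\vm$, so $e$ is an embedded arc; let $\ap,\oap$ be the weights of the two edges at $\vp$ other than $e$ (similarly $\am,\oam$ at $\vm$). On each side, if the two weights coincide, stop at that vertex, so that side contributes the good football $S^2(\ap,\oap)$ (resp.\ $S^2(\am,\oam)$); if they differ, extend $K$ one edge along the \emph{heavier} of the two and stop at its far endpoint, whose vertex-weights are the heavier weight together with some $a_2,a_3$, so that side contributes $S^2(\oap,a_2,a_3)$ (resp.\ $S^2(\oam,a_2',a_3')$), which is spherical by Lemma~\ref{firstcut} precisely because the unfollowed weight is strictly smaller than a vertex-weight there. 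If the vertices used are all distinct then $\omega|_{K'}$ is injective, $|X|\cong S^2\times I$, and we obtain Form~1 (stop/stop), Form~2 (stop/extend), or Form~3 (extend/extend). The remaining sub-cases are the coincidences: an extension edge from one endpoint of $e$ runs to the other endpoint, or the two extension edges share a far vertex; each yields one cycle through $T$, so $|X|\cong(S^2\times S^1)\backslash B^3$ and $\partial X$ is a single sphere whose (still $\le 3$) singular weights are spherical by Lemma~\ref{firstcut}. These give the looping Forms~5.a and~5.b, with Form~6 turning up where the sphericity inequalities additionally pin the weights to small values.

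Suppose instead $\vp=\vm=v$, so $e$ is a loop. Then $v$ carries a third edge $e^{*}$ of weight $a^{*}$, and the vertex-weights $a,a,a^{*}$ satisfy $1/a+1/a+1/a^{*}>1$, forcing $a\le 3$ (and $a^{*}=2$ when $a=3$) exactly as in Case~\ref{FootballCase3}. Because $p$ already lies on the loop $e$, the end $v$ contributes $0$ exit edges, so $N(T\cup e)$ alone would leave the teardrop $S^2(a^{*})$ on $\partial X$; we must extend along $e^{*}$, which (by valence three at $v$) reaches a distinct vertex $v'$, and --- since $v$ used up no exit edge --- we may now travel through up to \emph{two} vertices beyond $v$, following the heavier edge at each vertex and stopping as soon as the exit weights form a good football or, via Lemma~\ref{firstcut} and the smallness of $a$, a spherical triple. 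Here $|X|\cong(S^2\times S^1)\backslash B^3$ since $e$ is a loop, and the outcome is Form~4.a or Form~4.b (the a/b distinction recording whether the last ``stop'' is forced by two coinciding weights or by Lemma~\ref{firstcut} applying after following the heavier edge), with Form~6 again absorbing the residual tight-inequality configurations (for instance when a second vertex coincidence occurs).

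In each branch one then checks the three conclusions directly --- $T\subset X$ by construction; $|X|\cong S^2\times I$ or $(S^2\times S^1)\backslash B^3$ according to whether $\omega|_{K'}$ is injective or $\omega(K')$ carries one cycle; and each boundary component is $S^2$, a good football, or $S^2(a_1,a_2,a_3)$ with $1/a_1+1/a_2+1/a_3>1$, hence spherical by the list of spherical $2$-orbifolds in Section~\ref{sec:Preliminaries}. The part I expect to be hardest is purely combinatorial: making the case split genuinely exhaustive, which means carefully cataloguing the coincidence configurations (an extension edge returning to a used vertex, the two sides of $T$ interacting, a loop together with a coincidence) and, inside them, isolating the tight-inequality subcases where $1/a_1+1/a_2+1/a_3>1$ forces specific small weights and so produces the special Form~6. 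The tree $\chi$ with the locally injective $\omega$ is exactly what makes this tractable: each construction is carried out upstairs in $\chi$, where the relevant subtrees are embedded, and then pushed down by $\omega$, with Remark~\ref{remark:embedding_in_neighborhood} pinpointing when the pushed-down picture stays embedded and when a $1$-handle appears.
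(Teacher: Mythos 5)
Your overall strategy is the paper's: dispose of the Smooth Form, build \(\chi\) and a locally injective \(\omega\), grow a subcomplex \(K\supseteq e\) greedily, read off \(\partial X\) from the exit edges, and use Lemma~\ref{firstcut} to certify sphericity after following a heavier edge. The loop case (\(\vp=\vm\), Forms~4.a/4.b), Forms~1--3, and the shared-far-vertex coincidence (\(\vpp=\vmm\), Form~6) all come out essentially as in the paper. But there is a genuine gap in your treatment of the parallel-edge coincidence (the paper's Case~3, Forms~5.a/5.b), and it is exactly at the step you wave at with ``still \(\le 3\) singular weights \dots spherical by Lemma~\ref{firstcut}.''

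Concretely: suppose \(\vp\neq\vm\), \(e\) has weight \(2\), a second edge \(e^*\) of weight \(5\) also joins \(\vp\) to \(\vm\), the third edge at \(\vp\) has weight \(2\), and the third edge at \(\vm\) has weight \(3\) (both vertex inequalities hold: \(S^2(2,2,5)\) and \(S^2(2,3,5)\) are spherical). Your rule at \(\vp\) compares the two non-\(e\) weights \(5\) and \(2\), follows the heavier --- which is \(e^*\) itself --- and stops at its far endpoint \(\vm\). The resulting \(K=e\cup e^*\) has exit edges of weights \(2\) (at \(\vp\)) and \(3\) (at \(\vm\)), so \(\partial X\cong S^2(2,3)\): a \emph{bad football}, not spherical, and Lemma~\ref{firstcut} cannot apply since there are only two singular points and they are not the weights at a single vertex. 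The fix is the paper's: when a parallel edge exists, put \emph{both} \(e\) and \(e^*\) into \(K\) first, then compare the two remaining exit weights \(a_1\le a_1^*\) (the third edges at \(\vp\) and \(\vm\)) and, if they differ, extend \emph{one further edge} along the heavier exit edge \(e_1^*\); only then does Lemma~\ref{firstcut} give \(S^2(a_1,a_2,a_3)\) spherical. Your greedy rule spends its one allowed extension on \(e^*\) and never makes this second extension, so the construction as described terminates with a non-spherical boundary. This is why the paper tests for a parallel edge \emph{before} comparing the weights at \(\vp\) and \(\vm\); reorganizing your flowchart to do the same closes the gap.
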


\begin{proof}
\setcounter{case}{0}

A flow chart for the proof of this theorem can be found in Figure~\ref{FolwChart:TearDrop}.
\begin{figure}[h!]
\psfrag{y}{\scl[.5][Yes]}
\psfrag{n}{\scl[.5][No]}
\psfrag{1}{\scl[.8][Enter Teardrop]}
\psfrag{2}{\scl[.8][Is \(e\) smooth?]}
\psfrag{3}{\scl[.8][Smooth Form]}
\psfrag{4}{\scl[.8][\(\vp \stackrel{?}{=} \vm\)]}
\psfrag{5}{\scl[.8][Form~4]}
\psfrag{6}{\scl[.8][\(N(e)\) embeds]}
\psfrag{7}{\scl[.8][\(\exists\)?]}
\psfrag{8}{\scl[.8][an edge parallel]}
\psfrag{9}{\scl[.8][to \(e\)]}
\psfrag{A}{\scl[.8][Form~5]}
\psfrag{B}{\scl[.8][\(\ap \stackrel{?}{=} \oap\)]}
\psfrag{C}{\scl[.5][and]}
\psfrag{D}{\scl[.8][\(\am \stackrel{?}{=} \oam\)]}
\psfrag{E}{\scl[.8][Form~1]}
\psfrag{F}{\scl[.8][\(\ap > \oap\)]}
\psfrag{G}{\scl[.8][\(\am \stackrel{?}{=} \oam\)]}
\psfrag{H}{\scl[.8][Form~2]}
\psfrag{I}{\scl[.8][Form~3]}
\psfrag{J}{\scl[.8][Form~6]}
\psfrag{K}{\scl[.8][\(\vpp \stackrel{?}{=} \vmm\)]}
\centerline{\includegraphics[scale=0.4]{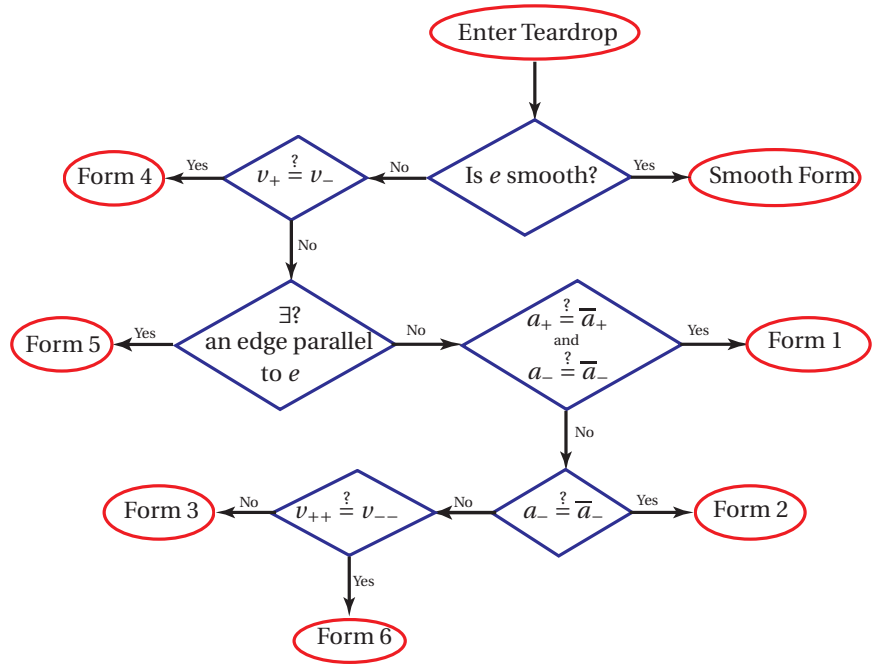}}
\caption{The flowchart diagraming the proof of Theorem \ref{localteardrop}}
\label{FolwChart:TearDrop}
\end{figure}
We note that the labels in Figures~\ref{figure:TearDropSmoothForm}--\ref{figure:TearDropForm6} correspond to weights.
Let us consider the teardrop $T$ and the edge\footnote{For the way we use the term \em edge \em recall Definition~\ref{Definition:TypesOfEdges}.}
\(e\) of $\Sigma$ that intersects it.  

\begin{case}
\textbf{$\boldsymbol{e}$ is smooth.} See Figure~\ref{figure:TearDropSmoothForm}.
When $e$ is a smooth simple closed curve we set \(X\) to be \(N\left(S^{2}(a) \cup e \right)\).  Clearly $X \cong \left(S^2(a)\times S^1\right) \setminus B^3$, where $a>1$. Therefore,  $|X| \cong \left(S^2\times S^1\right) \setminus B^3$ and $\partial X=S^2$.  This is the \em Smooth Form.\em\  
\end{case}

\medskip
\noindent
For the remainder of the proof we assume that \(e\) is not smooth. Before proceeding we construct \(\chi\) and \(\omega\) ({\it cf.} the construction of \(\chi\) and \(\omega\) on Page~\pageref{ConstructionChiOmega}). Again we note that \(\chi\) is a subspace of the universal cover of a component of \(\Sigma\) and \(\omega\) is the restriction of the covering projection to \(\chi\).

\medskip

{\bf Construction of \(\boldsymbol{\chi}\) and \(\boldsymbol{\omega}\).} 
Let \(\chi\) be the tree shown in Figure~\ref{notation}. We construct a map \(\omega:\chi \to \Sigma\); \(\omega\) is required to map the interior of every edge of \(\chi\) homeomorphically onto the interior of an edge of \(\Sigma\). We first map \(e'\) to \(e\) (with either orientation); this can be done since \(e\) is not smooth.  We then extend the map to \(\ep'\) and \(\oep'\), mapping them to edges adjacent to \(\omega(\vp')\) so that \(\omega\) is locally injective; this can be done because \(\Sigma\) is tri-valent. We similarly extend \(\omega\) to the rest of \(\chi\), obtaining \(\omega:\chi \to \Sigma\) that is locally injective although it may well fail to be injective.  We denote images under \(\omega\) by dropping the primes, for example, \(\omega(e') = e\).
Since the edges of \(\Sigma\) are weighted, \(\omega\) induces weights on the edges \(e',\ep',\oep',\eem'\) and \(\oem'\) which we denote \(a,\ap,\oap,\am\) and \(\oam\).  
\def\scl{\scalebox{.75}}
\begin{figure}[h!]
\psfrag{1}{{\scl{$\oem'$}}}
\psfrag{2}{\scl{$\vm'$}}
\psfrag{3}{\scl{$\vp'$}}
\psfrag{4}{\scl{$\ovpp'$}}
\psfrag{6}{\scl{$\oep'$}}
\psfrag{7}{\scl{$e'$}}
\psfrag{8}{\scl{$\eem'$}}
\psfrag{9}{\scl{$\ep'$}}
\psfrag{a}{\scl{$\vmm'$}}
\psfrag{b}{\scl{$\vpp'$}}
\psfrag{c}{\scl{$\ovpp'$}}
\psfrag{d}{\scl{$\ovmm'$}}
\centerline{\includegraphics[scale=0.15]{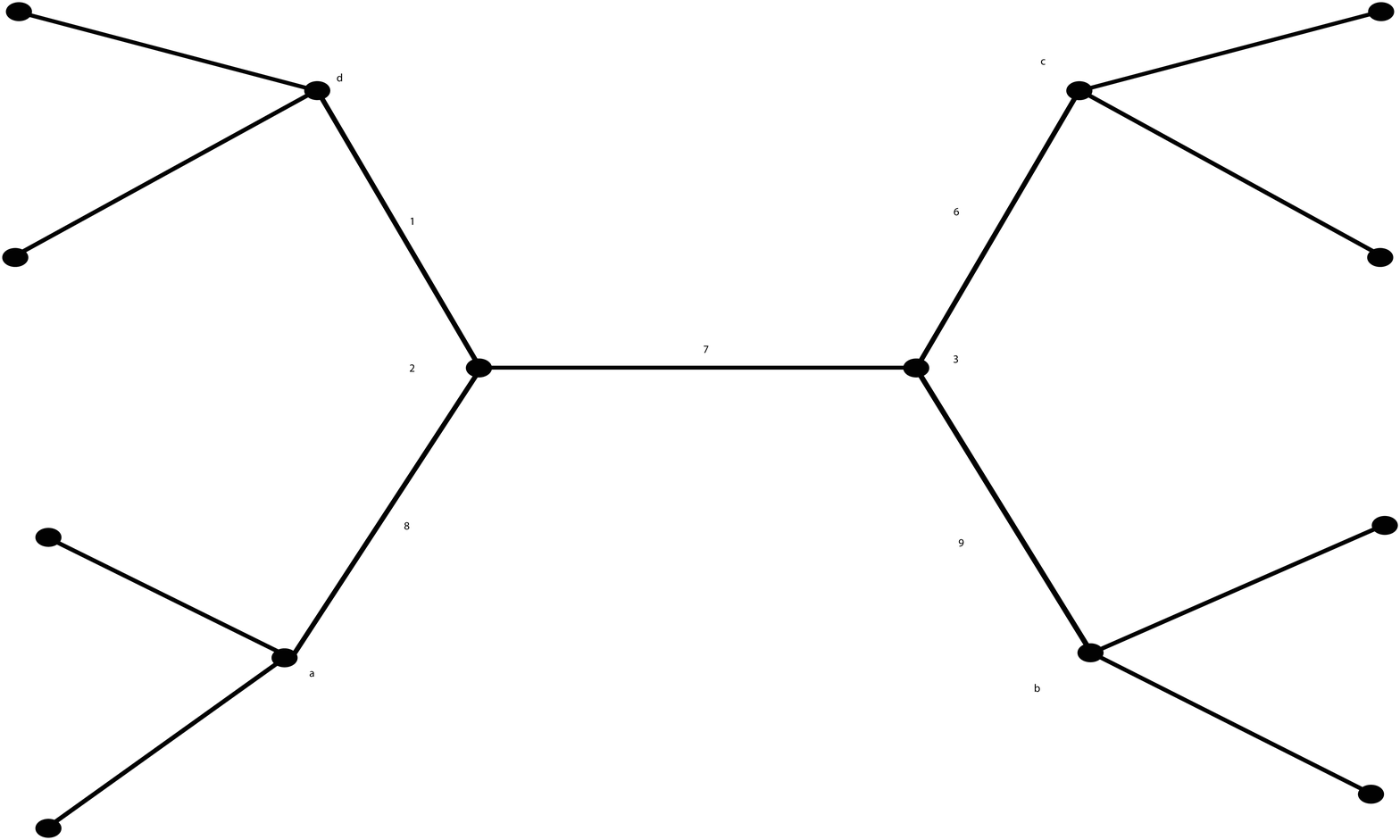}}
\caption{\(\chi\)}
\label{notation}
\end{figure}

\bigskip
\noindent
With the construction of \(\chi\) and \(\omega\) behind us we are now ready to proceed with the proof of Theorem~\ref{localteardrop}.  Recall that we already dispensed with Case~1, and we continue the work under the assumption that \(e\) is not smooth.

\noindent\begin{case}
\textbf{\(\boldsymbol{\vp = \vm}\).} See Figure~\ref{figure:form4}. We use \(v\) to denote \(\vp = \vm\). Since the valence of \(v\) is three we have that \(\ep=\oep = \eem = \oem\) and we use \(\ep\) to denote this edge and \(\ap\) to denote its weight.  Let \(\app\) and \(\oapp\) be the weights of the edges attached to \(\ep\) at \(\vpp\).  We assume as we may that \(\app \geq \oapp\) and distinguish two subcases:

\bigskip\noindent
{\bf Subcase~2.a: \(\boldsymbol{\app = \oapp}\).}  We take \(X\) to be \(N\left(T \cup e \cup \ep  \right)\) (see the left part of Figure~\ref{figure:form4}).  Since \(e\) is a simple closed curve and \(\ep \cap e\) is exactly one endpoint of \(\ep\) (since the valence of \(\vpp\) is three) we have that \(|X| \cong S^{2} \times S^{1} \setminus B^{3}\) and \(\partial X \cong S^{2}(\app,\app)\).  This is \em Form~4.a\em.

\bigskip\noindent
{\bf Subcase~2.b: \(\boldsymbol{\app > \oapp}\).}  Let \(\epp\) be the edge of weight \(\app\) that is connected to \(\ap\), and take \(X\) to be \(N\left(T \cup e \cup \ep \cup \epp \right)\) (see the right part of Figure~\ref{figure:form4}).  As above \(e\) is a simple closed curve and \(\ep\) intersects it in a single point. Moreover, \(\epp\) intersects \(e \cup \ep\) in a single point since the other endpoint of \(\epp\) cannot be \(v\) (since its valence is three) nor can it be \(\vp\), since the third edge connected to \(\vp\) is of weight \(\oapp \neq \app\).
Therefore we have that \(|X| \cong S^{2} \times S^{1} \setminus B^{3}\) and \(\partial X \cong S^{2}(\oap,a_{1},a_{2})\), where here \(a_{1}\) and \(a_{2}\) are the weights of the edges attached to \(e_{++}\).  Since \(\app > \oapp\) and \(\app,a_{1},a_{2}\) are the weights at a vertex, by Lemma~\ref{firstcut} we have that \( S^{2}(\oapp,a_{1},a_{2})\) is spherical. 
This is \em Form~4.b\em.
\end{case}
 
\bigskip\noindent We assume from now on that \(e\) is not smooth and \(\vp \neq \vm\).  By \em parallel edges \em we mean edges that connect the same vertices; they are not assumed to co-bound a disk in the orbifold.

\bigskip

\noindent\begin{case}
\label{case:ParallelEdge}
\textbf{Some edge of \(\boldsymbol{\Sigma}\) is parallel to \(\boldsymbol{e}\).}  See Figure~\ref{figure:form5}. Let \(e^{*}\) be an edge parallel to \(e\); if there are two such edges 
we choose the heavier of the two if their weights are distinct, and we choose one of the two arbitrarily otherwise.  This is not essential but does make the construction of \(X\) canonical.  Let \(a_{1} \leq a_{1}^{*}\) be the weights of the edges connected to \(e\) and \(e^{*}\) at \(\vp\) and \(\vm\). We distinguish two subcases:

\bigskip\noindent
{\bf Subcase~3.a: \(\boldsymbol{a_{1} = a_{1}^{*}}\).} 
See the left part of Figure~\ref{figure:form5}.  We take \(X\) to be \(N\left(T \cup e \cup e_{*}\right)\).  Since \(e \cup e^{*}\) is a simple closed curve we have that \(|X| \cong S^{2} \times S^{1} \setminus B^{3}\). By construction \(\partial X \cong S^{2}(a_{1},a_{1})\).  This is \em Form~5.a\em.

\bigskip\noindent
{\bf Subcase~3.b: \(\boldsymbol{a_{1} < a_{1}^{*}}\).} 
Let \(e_{1}^{*}\) be the edge of weight \(a_{1}^{*}\) (see the right part of Figure~\ref{figure:form5}). We set \(X\) to be \(T \cup e \cup e^{*} \cup e_{1}^{*}\).  Since \(a_{1} \neq a_{1}^{*}\) we have that \(e_{1}^{*} \cap \left(e \cup e^{*}\right)\) is a single vertex. 
It follows that \(|X| \cong S^{2} \times S^{1} \setminus B^{3}\).  Moreover, \(\partial X \cong S^{2}(a_{1},a_{2},a_{3})\), where here \(a_{2}\) and \(a_{3}\) are the weights of the edges connected to \(e_{1}^{*}\).   Since \(a_{1}^{*} > a_{1}\) and \(a_{1}^{*},a_{2},a_{3}\) are the weights at a vertex, by Lemma~\ref{firstcut} we have that \( S^{2}(a_{1},a_{2},a_{3})\) is spherical. This is \em Form~5.b\em.
\end{case}

\bigskip\noindent We assume from now on that \(e\) is not smooth, \(\vp \neq \vm\), and no edge of \(\Sigma\) is parallel to \(e\).  We then have the following (recall the labels as in Figure~\ref{notation}):

\begin{enumerate}
\item \(\vmm \neq \vp\) and \(\vpp \neq \vm\) (otherwise we would be in Case~\ref{case:ParallelEdge}).  
Similar statements hold  \(\ovpp\), and \(\ovmm\) but we will not need them.  
(However we cannot rule out \(\vpp = \vp\), \(\ovpp = \vp\), \(\vmm = \vm\), and \(\ovmm = \vm\).)

\item By reversing the embedding \(e' \to e\) and relabeling, \(\ep \leftrightarrow \oep\) and \(\eem \leftrightarrow \oem\), if necessary, we may assume further that one of the following holds:
	\begin{enumerate}
	\item \(\ap = \oap\) and \(\am = \oam\). 
	\item \(\ap = \oap\) and \(\am > \oam\). 
	\item \(\ap > \oap\) and \(\am > \oam\). 
	\end{enumerate}
\end{enumerate}
This leaves us with the three cases below:

\bigskip

\noindent\begin{case}
\textbf{\(\boldsymbol{\ap = \oap\) and \(\am = \oam}\).} See Figure~\ref{figure:form1}. We set \(X\) to be \(N(T \cup e)\). Clearly \(|X| \cong S^{2} \times [0,1]\) and \(\partial X\) consists of the two good footballs \(S^{2}(\ap,\oap)\) and \(S^{2}(\am,\oam)\). This is \em Form~1.\em
\end{case}

\noindent\begin{case}
\textbf{\(\boldsymbol{\ap = \oap\) and \(\am > \oam}\).} See Figure~\ref{figure:form2}. 
Since \(\am > \oam\) we have that  \(\eem\) and \(\oem\) are distinct edges, that is, \(\vmm \neq \vm\).  Furthermore, we know that \(\vmm \neq \vp\) (assumption~(1) above).
Therefore \(e \cup \eem\) is homeomorphic to a closed interval.
We set \(X\) to be \(N(T \cup e \cup e_{+})\). Clearly \(|X| \cong S^{2} \times [0,1]\) and \(\partial X\) consists of two components, one of which is the good football \(S^{2}(\ap,\oap)\) and the other \(S^{2}(\oam,a_{2},a_{3})\), where here \(a_{2}\) and \(a_{3}\) are the weights of the edges attached to \(\eem\) at \(\vmm\).  
Since \(\am,a_{2},a_{3}\) are the weights at a vertex and \(\oam < \am\), by Lemma~\ref{firstcut} we have that \(S^{2}(\oam,a_{2},a_{3})\) is spherical.  This is \em Form~2.\em
\end{case}

\noindent\begin{case}
\textbf{\(\boldsymbol{\ap > \oap}\) and \(\boldsymbol{\am > \oam}\).} We distinguish two subcases:

\bigskip\noindent
{\bf Subcase~6.a: \(\boldsymbol{\vmm \neq \vpp}\).} See Figure~\ref{figure:form3}.  The assumption that \(\vmm \neq \vpp\) together with the assumptions above imply that \(e_{-} \cup e \cup e_{+}\) homeomorphic to a closed interval. We set \(X\) to be \(N(T \cup e \cup e_{+} \cup e_{-})\). Clearly \(|X| \cong S^{2} \times [0,1]\).  Morevoer, \(\partial X\) consists of two components,  \(S^{2}(\oap,a_{2},a_{3})\) and  \(S^{2}(\oam,a'_{2},a'_{3}\)), where here \(a_{2}\) and \(a_{3}\) (respectively, \(a_{2}'\) and \(a_{3}'\)) are the weights of the edges attached to \(\ep\) at \(\vpp\) (respectively, to \(\eem\) at \(\vmm\)).  
Similar to Case~5, by Lemma~\ref{firstcut} both components of \(\partial X\) are spherical.  This is \em Form~3.\em

\bigskip\noindent
{\bf Subcase~6.b: \(\boldsymbol{\vmm = \vpp}\).} See Figure~\ref{figure:TearDropForm6}. We again take \(X\) to be \(N(T \cup e \cup e_{+} \cup e_{-})\). This time, however, the assumption that \(\vmm = \vpp\) implies that \( e \cup e_{+} \cup e_{-}\) forms a simple closed curve. We therefore see that \(|X| \cong (S^{2} \times S^{1}) \setminus B^{3}\) and \(\partial X \cong S^{2}(\oap,\oam,a^{*})\), where  \(a^{*}\) is the weight of the edge attached to \(\ep\) and \(\eem\) at \(\vpp=\vmm\).

It remains to show that \(\partial X\) is spherical.  
Since all the weights are integers no less than \(2\), 
the assumptions of Case~6 imply that \(\ap \geq 3\) and \(\am \geq 3\).  
The weights at \(\vpp\) are \(\ap, \am,\) and \(a^{*}\), and satisfy (as the weights about any vertex do) 
\[
\frac{1}{\ap} + \frac{1}{\am} + \frac{1}{a^{*}} \geq 1
\]
This forces (recall the list of spherical 2-orbifolds on Page~\pageref{2orbifolds}) at least one of the weights to be \(2\), showing that \(a^{*} = 2\).  
Furthermore, at least one of \(\ap \leq 3\) or \(\am \leq 3\) holds, and we may assume that \(\am = 3\). 
Since \(3=\am > \oam \geq 2\), we have that \(\oam = 2\).   Thus 
\[
\partial X \cong S^{2}(\oap,\oam,a^{*}) \cong S^{2}(\oap,2,2)
\]
Thus \(\partial X\) is spherical (regardless of \(\oap\)).  (The keen reader will have observed that the labels in Figure~\ref{figure:TearDropForm6} provide more information; we will leave it as an exercise to justify this, as it is not used in the proof.) This is \em Form~6.\em
\end{case}
\end{proof}

\section{Proof of Main Theorem}\label{sec:Proof of Main Theorem}

Before proving Theorem~\ref{mainthm} we state and prove a few preliminary lemmas.

\begin{lemm}
Let $Y$ be a 3-orbifold with spherical boundary and let $\hat{Y}$ be the 3-orbifold obtained from \(Y\) by capping its boundary. Then any collection of disjointly embedded 2-orbifolds in $\hat{Y}$ can be isotoped into $Y$.
\label{lemma:IsotopingIntoXi}
\end{lemm}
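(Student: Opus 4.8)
The plan is to handle the caps one at a time and, for each of them, slide the given $2$-orbifolds out of the cap into $Y$ through a collar. Write $\hat Y = Y \cup C_1 \cup \cdots \cup C_k$, where each cap $C_i$ is the cone on the spherical $2$-orbifold $S_i = \partial_i Y$ (a boundary component of $Y$), with apex $v_i$; the $C_i$ are disjointly embedded in $\hat Y$. Let $\mathcal S$ be the given collection of disjointly embedded, closed, orientable $2$-orbifolds. First I would put $\mathcal S$ in general position, transverse to the singular set $\Sigma$ and to every $S_i$.

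The key preliminary step is to isotope $\mathcal S$ off the apexes $v_1,\dots,v_k$. When $S_i = S^2$, $v_i$ is a smooth point of $\hat Y$ and a generic surface already misses it. When $S_i = S^2(a,a)$, $v_i$ lies in the interior of an edge of $\Sigma$ of weight $a$; a component of $\mathcal S$ meeting that edge meets it in finitely many cone points of order $a$, and these can be slid along the edge away from $v_i$. When $S_i$ has three singular points, $v_i$ is a trivalent vertex of $\Sigma$ whose local group is $D_n$, $A_4$, $S_4$, or $A_5$, and here \emph{no} closed orientable $2$-suborbifold passes through $v_i$ at all: such a suborbifold would, in the local model $\mathbb R^3/G_{v_i}$, be the quotient of a $G_{v_i}$-invariant plane through the origin; but $A_4, S_4, A_5$ act irreducibly on $\mathbb R^3$, so they have no invariant plane, while for $D_n$ every invariant plane quotients to a $2$-orbifold with mirror edges (a corner reflector), contradicting orientability (this is, incidentally, consistent with Proposition~\ref{torf}, after whose proof corner reflectors do not occur). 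Thus after a preliminary isotopy we may assume $\mathcal S \cap \{v_1,\dots,v_k\} = \emptyset$.

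Now fix $i$. Since $C_i$ is the cone on $S_i$, we have $C_i \setminus \{v_i\} \cong S_i \times (0,1]$ with $S_i \times \{1\} = \partial_i Y$; extend this by a collar $S_i \times [1,2]$ of $\partial_i Y$ lying in $Y$, chosen disjoint from the analogous collars for $j \neq i$. Because $\mathcal S$ is compact and misses $v_i$, there is $\varepsilon_i > 0$ with $\mathcal S \cap C_i \subset S_i \times [\varepsilon_i, 1]$. I would then take an ambient isotopy of $\hat Y$ supported in $S_i \times [\varepsilon_i/2, 2]$, given there by an increasing reparametrization of the second coordinate that is the identity near $\varepsilon_i/2$ and near $2$ and carries $[\varepsilon_i, 1]$ into $(1,2)$. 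This pushes $\mathcal S \cap C_i$ into $S_i \times (1,2) \subset Y$, carries the part of $\mathcal S$ already in $Y$ back into $Y$, and is supported away from the other caps; composing these isotopies over $i = 1,\dots,k$ carries all of $\mathcal S$ into $Y$.

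The main obstacle is the second paragraph: one must be certain the components of $\mathcal S$ can genuinely be moved off the apexes, and the only substantive point is the behaviour at trivalent vertices of $\Sigma$, where orientability does the work. If one prefers a more classical route, one can instead first minimize $|\mathcal S \cap S_i|$ and remove it by pushing across innermost disk $2$-orbifolds of $S_i$ (legitimate since $|C_i|$ is a $3$-ball and $\Sigma \cap C_i$ is an unknotted arc or standard tripod with leaves on $S_i$, \emph{cf.}\ Lemmas~\ref{leavesonboundary} and~\ref{Lem:NoBadOrbsInNS}), then slide the remaining, now interior, components out as above; but the collar slide makes the intersection-reducing step unnecessary.
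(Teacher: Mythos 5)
Your proof is correct and follows essentially the same route as the paper's (much terser) argument: the apex of each cap is either avoidable by general position or a vertex of $\Sigma$, which no closed orientable $2$-suborbifold can pass through, after which everything is pushed radially out of the cone and into $Y$. Your second paragraph merely supplies the justification --- irreducibility of the polyhedral representations and the corner-reflector obstruction for $D_n$ --- that the paper's two-sentence proof leaves implicit.
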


\begin{proof}
Each cap is the cone on a boundary component.
Since embedded 2-orbifolds cannot intersect the vertices of $\Sigma$, any such collection can be isotoped out of the caps and into \(Y\).
\end{proof}

This is the setup for Lemma~\ref{lemma:FewerTeardrops}: let $\OO_{i-1}$ be a closed, orientable orbifold admitting an embedded teardrop (the index is chosen to coincide with indices below).  Let $T\subset \OO_{i-1}$ be  a member of a maximal\footnote{Here by \em maximal \em we mean a family with the largest possible number of teardrops (similarly below, footballs).}  
 family of disjointly embedded, non-parallel teardrops (a maximal family exists by Theorem~\ref{badbounded}). Let \(X_{i} \subset \OO_{i-1}\) be the sub-orbifold containing \(T\) guaranteed to exist by Theorem~\ref{localteardrop}.  Finally, let \(\OO_{i}\) be the orbifold obtained from \(\OO_{i-1}\) by cutting-and-capping \(X_{i}\).

\begin{lemm}
The maximal number of disjointly embedded, non-parallel teardrops in $\OO_i$ is strictly less than the maximal number of disjointly embedded, non-parallel teardrops in $\OO_{i-1}$.
\label{lemma:FewerTeardrops}
\end{lemm}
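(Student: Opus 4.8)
The plan is to compare maximal families on the two sides. Let $k$ denote the maximal number of disjointly embedded, pairwise non-parallel teardrops in $\OO_{i-1}$, so that the given family $\mathcal{T}$ (with $T\in\mathcal{T}$) has $k$ members; let $\mathcal{S}=\{S_1,\dots,S_m\}$ be a maximal such family in $\OO_i$. Denote by $Y$ the complement of $X_i$ in $\OO_{i-1}$, so that $\partial X_i$ is the spherical boundary of $Y$ and $\OO_i=\hat{Y}$. By Lemma~\ref{lemma:IsotopingIntoXi} we isotope $\mathcal{S}$ into $Y$, and since $\partial X_i$ contains no teardrop we may take $\mathcal{S}$ to lie in $\operatorname{int}Y$; in particular $\mathcal{S}$ is disjoint from $X_i$. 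As $T\subset\operatorname{int}X_i$, the family $\mathcal{S}\cup\{T\}$ is disjointly embedded in $\OO_{i-1}$. I will show that it is also pairwise non-parallel in $\OO_{i-1}$; granting this, maximality of $\mathcal{T}$ yields $m+1\le k$, which is the assertion.

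Two remarks underlie the non-parallelism. First, if $R\cong S^2(a)\times I$ is a product region realizing a parallelism in $\OO_{i-1}$, then $\Sigma\cap R$ is the image of the singular set of the abstract orbifold $S^2(a)\times I$: a single arc of weight $a$ with no vertex, hence contained in the interior of one edge of $\Sigma$. Second, scanning the Forms of Theorem~\ref{localteardrop}, $\Sigma\cap X_i$ contains a vertex of $\Sigma$ in every Form except the Smooth Form, while for the Smooth Form $|X_i|\cong (S^2\times S^1)\setminus B^3$ and $\partial X_i$ is a single smooth $2$-sphere.

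Now suppose first that $S_p$ and $S_q$ ($p\ne q$) are parallel in $\OO_{i-1}$, via a product region $R$. Since $\partial R=S_p\sqcup S_q$ is disjoint from $X_i$ and $X_i$ is connected, $X_i$ lies in a single component of $\OO_{i-1}\setminus\partial R$; that is, $X_i\cap R=\emptyset$ or $X_i\subset\operatorname{int}R$. If $X_i\subset\operatorname{int}R$ then $\Sigma\cap X_i\subseteq\Sigma\cap R$ is vertex-free, so $X_i$ is the Smooth Form; but then $\partial X_i$ is a single sphere embedded in $\operatorname{int}(S^2\times I)$, and an easy position argument (the sphere either bounds a ball or separates the two ends of $R$) shows that no submanifold it bounds, disjoint from $\partial R$, can have underlying space $(S^2\times S^1)\setminus B^3$ --- a contradiction. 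Hence $X_i\cap R=\emptyset$, so $R\subset Y\subset\OO_i$, and $S_p$ and $S_q$ are parallel in $\OO_i$, contradicting maximality (i.e.\ non-parallelism) of $\mathcal{S}$. Now suppose instead that $T$ and some $S_q$ are parallel in $\OO_{i-1}$, via $R\cong S^2(a)\times I$. Then $\Sigma\cap R$ is a weight-$a$ arc from $T\cap\Sigma$ to $S_q\cap\Sigma$; since it emanates from the point $T\cap\Sigma$, which is interior to the edge $e$ meeting $T$, and since it crosses no vertex, it is contained in $e$. But $e\subseteq X_i$ by construction (indeed $X_i=N(T\cup e\cup\cdots)\supseteq N(T\cup e)$), so $S_q\cap\Sigma\in X_i$, contradicting $S_q\cap X_i=\emptyset$. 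This proves that $\mathcal{S}\cup\{T\}$ is pairwise non-parallel, and the lemma follows.

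The step I expect to require the most care is the dichotomy ``$X_i\cap R=\emptyset$ or $X_i\subset\operatorname{int}R$'' together with ruling out the second alternative: one must verify cleanly both that a product region $R\cong S^2(a)\times I$ forces $\Sigma\cap R$ to be a single vertex-free arc and that $(S^2\times S^1)\setminus B^3$ cannot sit inside $\operatorname{int}(S^2\times I)$ with the single boundary sphere dictated by the Smooth Form. Everything else, including the final count against $\mathcal{T}$, is routine.
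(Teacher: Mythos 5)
Your proof is correct and follows essentially the same route as the paper: isotope the maximal family of $\OO_i$ off the caps into the complement of $X_i$, adjoin $T$, and rule out parallelisms by noting that a product region $S^2(a)\times I$ meets $\Sigma$ in a single vertex-free arc (so it can neither contain $X_i$ nor realize a parallelism $T\sim S_q$ without forcing $S_q\cap\Sigma$ onto $e\subset X_i$), while a product region disjoint from $X_i$ would survive into $\OO_i$. The only difference is cosmetic: your topological argument excluding the Smooth Form from $\operatorname{int}R$ is unnecessary, since $\Sigma\cap R$ being a single arc already cannot contain the smooth simple closed curve that the Smooth Form's singular set contains.
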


\begin{proof}
Let $t_{\text{max}}$ be the maximal number of disjointly embedded, non-parallel teardrops in $\OO_{i-1}$. Suppose towards contradiction that $\mathcal{T}_{i}\subset\OO_{i}$ is a collection of $t_{\text{max}}$ disjointly embedded, non-parallel teardrops. Since \(X_{i}\) is obtained by capping \(\OO_{i-1} \setminus X_{i}\), by Lemma~\ref{lemma:IsotopingIntoXi} we may isotope \(\mathcal{T}_{i}\) out of the caps and consider $\mathcal{T}_{i}\subset \OO_{i-1}\backslash X_i$.  As $T\subset X_i$, we have that 
$$\mathcal{T}_{i}\cap T=\emptyset.$$ 
Therefore, $\mathcal{T}_{i}\cup T$ form $t_{\text{max}}+1$ disjointly embedded teardrops in $\OO_{i-1}$ and by definition of \(t_{\text{max}}\) one of the following holds:
\begin{enumerate}
\item There exists $T',T''\in\mathcal{T}_{i}$ that are parallel in $\OO_{i-1}$: let $P\subset\OO_{i-1}$ be a product region that $T'$ and $T''$ co-bound. Since $T'\cap X_{i}, T''\cap X_{i}=\emptyset$, one of the following two cases holds:
\begin{enumerate}
\item $X_i\subset P$: By considering all of the forms of $X_i$ from Theorem \ref{localteardrop}, we see that $\Sigma\cap X_i$ contains either a smooth simple closed curve (Smooth Form) or a vertex (all other forms). Therefore $X_{i}\not\subset P$, as a product region contains no simple closed curve or vertex of the singular set.

\item $X_i\cap P=\emptyset$: Then $P\subset \OO_{i}$ showing that $T'$ and $T''$ are parallel in $\OO_{i}$, contrary to our assumption.
\end{enumerate}
Therefore, case (1) cannot happen. 

\item $T$ is parallel to $T'$, for some $T'\in\mathcal{T}_{i}$: Let \(e\) be the edge of $\Sigma$ that meets $T$.\footnote{ 
Keep in mind that an edge may be a simple closed curve, as explained in Definition~\ref{Definition:TypesOfEdges}.}
By considering all the forms of $X_i$ in Theorem \ref{localteardrop} we know that $e\subset X_i$. As $T'$ is parallel to $T$, the orbifold point of $T'$ must also lie on $e$.  Therefore $T'$ cannot be isotoped out of \(X_{i}\), contradicting our construction. Therefore, case (2) cannot happen.
\end{enumerate}
This establishes Lemma~\ref{lemma:FewerTeardrops}.
\end{proof}

This is the set-up for Lemma~\ref{lemma:FewerBadFootballs}: let $\OO_{n+i-1}$ be a closed, orientable orbifold admitting an embedded bad-football but no embedded teardrop (the index is chosen to coincide with indices below). 
Let $F \subset \OO_{n+i-1}$ be a member of a maximal family of disjointly embedded, non-parallel bad-footballs (a maximal family exists by Theorem~\ref{thm:Finiteness_of_bad_footballs}). Let \(X_{n+i} \subset \OO_{n+i-1}\) be the sub-orbifold containing \(F\) guaranteed to exist by Theorem~\ref{localfootball}.  Finally, let \(\OO_{n+i}\) be the orbifold obtained from \(\OO_{n+i-1}\) by cutting-and-capping \(X_{n+i}\).

\begin{lemm}
The maximal number of disjointly embedded, non-parallel bad-footballs in $\OO_{n+i}$ is strictly less than the maximal number of disjointly embedded, non-parallel bad-footballs in $\OO_{n+i-1}$. Furthermore, there does not exist a teardrop in $\OO_{n+i}$.
\label{lemma:FewerBadFootballs}
\end{lemm}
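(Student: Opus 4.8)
\textbf{Proof proposal for Lemma~\ref{lemma:FewerBadFootballs}.}

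The plan is to mirror the proof of Lemma~\ref{lemma:FewerTeardrops} closely, with two modifications: we must handle the "furthermore" clause about teardrops, and we must account for the fact that $X_{n+i}$ may have two boundary components and hence cutting-and-capping can disconnect $\OO_{n+i-1}$, which is harmless here since every statement is about a single component. First I would dispose of the teardrop clause: since $\OO_{n+i-1}$ contains no teardrop, and $\OO_{n+i}$ is obtained from $\OO_{n+i-1}$ by removing $X_{n+i}$ and capping, any teardrop in $\OO_{n+i}$ can be isotoped out of the caps and into $\OO_{n+i-1}\setminus X_{n+i} \subset \OO_{n+i-1}$ by Lemma~\ref{lemma:IsotopingIntoXi}, producing a teardrop in $\OO_{n+i-1}$, a contradiction. (Here I use that capping adds only a cone point, and embedded $2$-orbifolds avoid vertices — exactly as in Lemma~\ref{lemma:IsotopingIntoXi}.)

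For the main clause, let $f_{\text{max}}$ be the maximal number of disjointly embedded, non-parallel bad-footballs in $\OO_{n+i-1}$, and suppose for contradiction that $\mathcal{F}_{n+i} \subset \OO_{n+i}$ is such a family of size $f_{\text{max}}$. By Lemma~\ref{lemma:IsotopingIntoXi} I would isotope $\mathcal{F}_{n+i}$ out of the caps, viewing $\mathcal{F}_{n+i} \subset \OO_{n+i-1}\setminus X_{n+i}$, hence disjoint from $F \subset X_{n+i}$. Then $\mathcal{F}_{n+i}\cup F$ is a collection of $f_{\text{max}}+1$ disjointly embedded bad-footballs in $\OO_{n+i-1}$, so by maximality of $f_{\text{max}}$ either (1) two members $F', F'' \in \mathcal{F}_{n+i}$ are parallel in $\OO_{n+i-1}$, or (2) $F$ is parallel to some $F' \in \mathcal{F}_{n+i}$. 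Case (1) splits into $X_{n+i}\subset P$ or $X_{n+i}\cap P = \emptyset$, where $P$ is the product region cobounded by $F'$ and $F''$: in the first case I would observe, by inspecting the Smooth Form, Form~1, and Form~2 of Theorem~\ref{localfootball}, that $\Sigma\cap X_{n+i}$ contains a smooth simple closed curve (Smooth Form) or a vertex (Forms~1 and~2), neither of which can sit inside a product region $S^2(a,b)\times I$, so $X_{n+i}\not\subset P$; in the second case $P\subset\OO_{n+i}$ gives $F', F''$ parallel in $\OO_{n+i}$, contradicting the choice of $\mathcal{F}_{n+i}$.

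For case (2), let $e$ and $f$ be the two singular edges meeting $F$ (weights $b > a$, as in the proof of Theorem~\ref{localfootball}). In every form of Theorem~\ref{localfootball}, the construction takes $X_{n+i} = N(F\cup e)$ or $N(F\cup f)$ (or a slightly larger neighborhood), so in particular the edges meeting $F$ are contained in $X_{n+i}$, at least in a neighborhood of $F$. Since $F'$ is parallel to $F$, the two singular points of $F'$ lie on the same two singular edges of $\Sigma$ as those of $F$, and the weights force $F'$ to be threaded by these same edges; hence $F'$ cannot be isotoped out of $X_{n+i}$, contradicting $\mathcal{F}_{n+i}\subset \OO_{n+i-1}\setminus X_{n+i}$. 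This rules out case (2) and completes the argument. I expect the main obstacle to be case (2): unlike teardrops, a bad-football has two singular points and the statement "the singular edges through $F'$ must coincide with those through $F$" requires care — one should argue that parallelism of $F$ and $F'$ produces a region $F\times I$ whose singular set is two arcs of weights $a$ and $b$ running from $F$ to $F'$, so the edges of $\Sigma$ carrying the orbifold points of $F'$ are the continuations of $e$ and $f$, which lie in $X_{n+i}$ near $F$; combining this with the fact that $X_{n+i}$ is a neighborhood of $F$ together with (a piece of) these edges shows $F'$ cannot escape $X_{n+i}$.
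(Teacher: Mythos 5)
Your proposal is correct and follows essentially the same route as the paper's proof: isotope the putative maximal family out of the caps via Lemma~\ref{lemma:IsotopingIntoXi}, adjoin $F$ to exceed $f_{\text{max}}$, and rule out the two resulting parallelism cases by observing that $\Sigma\cap X_{n+i}$ contains a vertex or a smooth simple closed curve (so $X_{n+i}\not\subset P$) and that an orbifold point of any football parallel to $F$ must lie on $e$ or $f$, one of which is contained in $X_{n+i}$; the teardrop clause is handled identically. Your added justification in case (2) --- that the singular set of the product region consists of two vertex-free arcs continuing $e$ and $f$ --- is a correct filling-in of a step the paper states more tersely.
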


\begin{proof}
Let $f_{\text{max}}$ be the maximal number of disjointly embedded, non-parallel footballs in $\OO_{n+i-1}$. Suppose towards contradiction that $\mathcal{F}_{n+i}\subset\OO_{n+i}$ is a collection of $f_{\text{max}}$ disjointly embedded, non-parallel footballs. By Lemma \ref{lemma:IsotopingIntoXi} we may assume that $\mathcal{F}_{n+i}\subset \OO_{n+i-1}\backslash X_{i}$. As $F \subset X_{n+i}$ we have that
$$\mathcal{F}_{n+i}\cap F=\emptyset.$$ 
Therefore, $\mathcal{T}_{n+i}\cup F$ form $f_{\text{max}}+1$ disjointly embedded bad-footballs in $\OO_{n+i-1}$ and by definition of \(f_{\text{max}}\) one of the following holds:
\begin{enumerate}
\item There exists $F^{'},F^{''}\in\mathcal{F}_{n+i}$ that are parallel in $\OO_{n+i-1}$: let $P\subset\OO_{i-1}$ be a product region that $F^{'}$ and $F^{''}$ co-bound. Since $F^{'}\cap X_{n+i}, F^{''}\cap X_{n+i}=\emptyset$, one of the following two cases holds:
\begin{enumerate}
\item $X_{n+i} \subset P$: By considering all of the forms of $X_{n+i}$ from 
Theorem~\ref{localfootball},
we see that $\Sigma\cap X_{n+i}$ contains either a smooth simple closed curve (Smooth Form) or a vertex (all other forms). Therefore $X_{n+i}\not\subset P$, as these do not exist in a product region.
\item $X_{n+i} \cap P=\emptyset$: Then $P\subset \OO_{n+i}$ showing that $F^{'}$ and $F^{''}$ are parallel in $\OO_{n+i}$, contrary to our assumption.
\end{enumerate}
Therefore, case (1) cannot happen. 

\item $F$ is parallel to $F'$, for some $F'\in\mathcal{T}_{n+i}$: Let \(e\) and \(f\) be the edges of $\Sigma$ that meet $F$.  By considering all the forms of $X_{n+i}$ in Theorem~\ref{localfootball} we see that $e \subset X_{n+i}$ or $f \subset X_{n+i}$. As $F'$ is parallel to $F$, at least one orbifold point of $F'$ must lie on $e$ or \(f\).  
Therefore $F'$ cannot be isotoped out of \(X_{i}\), contradicting our construction. 
Therefore, case (2) cannot happen.
\end{enumerate}
If \(\OO_{n+i}\) admitted a teardrop \(T\) then, by Lemma~\ref{lemma:IsotopingIntoXi} we could isotope \(T\) into \(\OO_{n+i-1}\), contradicting our assumptions.

This establishes Lemma~\ref{lemma:FewerBadFootballs}.
\end{proof}

\begin{proof}[Proof Theorem~\ref{mainthm}]

We construct $n$ and $\OO_{i}$ $(i=0,\dots,n)$ as follows: set $\OO_{0}=\OO$ and assume we have constructed $\OO_{i-1}$.
  \begin{enumerate}
  \item If $\OO_{i-1}$ does not contain a teardrop, set $n=i-1$.
  \item If $\OO_{i-1}$ does contain a teardrop, let \(T\) be a member of a maximal family of disjointly embedded,  non-parallel teardrops, and let $X_i$ be a suborbifold containing \(T\) guaranteed to exist by Theorem \ref{localteardrop}.  Construct \(\OO_{i}\) by  cutting-and-capping $X_{i} \subset \OO_{i-1}$.
  \end{enumerate}
By Theorem \ref{badbounded} the number of disjointly embedded, pairwise non-parallel teardrops in \(\OO_{i}\) is finite, and by Lemma~\ref{lemma:FewerTeardrops} this number is smaller than the corresponding number in \(\OO_{i-1}\).  Therefore the process terminates, yielding  \(n\) with $\OO_{n}$ not containing a teardrop.

\bigskip

\noindent We next construct $m$ and $\OO_{n+i}$ $(i=1,\dots,m)$ as follows: starting with \(\OO_{n}\) above, assume we have constructed $\OO_{n+i-1}$ not containing a teardrop.
  \begin{enumerate}
  \item If $\OO_{n+i-1}$ does not contain a bad-football, set $m=i-1$.
  \item If $\OO_{n+i-1}$ does contain a teardrop, let \(F\) be a member of a maximal family of disjointly embedded, mutually non-parallel bad-footballs, and let $X_{n+i}$ be a suborbifold containing \(F\) guaranteed to exist by Theorem~\ref{localfootball}.  Construct \(\OO_{n+i}\) by cutting-and-capping $X_{n+i} \subset \OO_{n+i-1}$.
  \end{enumerate}
By Theorem \ref{thm:Finiteness_of_bad_footballs} the number of disjointly embedded, pairwise non-parallel footballs in \(\OO_{n+i}\) is finite, and by Lemma~\ref{lemma:FewerBadFootballs} this number is smaller than the corresponding number  in \(\OO_{n+i-1}\). Lemma~\ref{lemma:FewerBadFootballs} also tells us that \(\OO_{n+i}\) does not admit a teardrop, and hence we may continue this process, eventually yielding \(m\) with $\OO_{n+m}$ not containing a bad-football or a teardrop.
\end{proof}

%\nocite{*}

%%%%%%%%%%%%%%%%%%%%%%%%%%%%%%%%%%%%%%%%%%%%%%%%%%%%%%%%%%%%%%%%%%%%%%%%%%%%%

\end{document}